\documentclass[11pt]{amsart}
\usepackage[margin=1.25in]{geometry}

\usepackage{comment}
\usepackage{mathrsfs}
\usepackage{amsfonts}
\usepackage{latexsym,epsfig}
\usepackage{amsmath, amsthm, amscd, amssymb, centernot}
\usepackage[pdftex]{color}
\usepackage{comment}
\usepackage{marginnote}
\usepackage[colorlinks,citecolor=cyan,linkcolor=teal]{hyperref}
\usepackage[nameinlink]{cleveref}
\usepackage{enumitem}
\usepackage[normalem]{ulem}

\usepackage{eso-pic}

\usepackage{tikz}
\usepackage{tikz-cd}
\usetikzlibrary{matrix,decorations.pathmorphing,arrows}
\tikzset{commutative diagrams/.cd,arrow style=tikz,diagrams={>=stealth'}}
\usepackage{todonotes}

\newcommand{\HH}[0]{\mathbb{H}}

\newcommand{\Z}{\mathbb{Z}}

\newcommand{\R}{\mathbb{R}}

\renewcommand{\l}{\lambda}

\newcommand{\wt}{\widetilde}
\newcommand{\mc}{\mathcal}

\newcommand{\wh}{\widehat}

\newcommand{\N}{\mathbb{N}}

\newcommand{\orb}{\mathcal{O}}

\newcommand\tsim{\kern-.4em\sim}

\newcommand\ssm{\smallsetminus}

\newcommand{\spn}{\mathrm{span}}

\newcommand{\eplus}{\mathbf{e}^+}
\newcommand{\emin}{\mathbf{e}^-}
\newcommand{\epm}{\mathbf{e}^\pm}

\renewcommand{\phi}{\varphi}
\renewcommand{\epsilon}{\varepsilon}

\newcommand{\defn}[1]{\textbf{\emph{#1}}}

\DeclareMathOperator{\core}{core}

\newcommand{\univ}{S^1_{\mathrm{univ}}}

\newcommand{\lu}{\mathrm{LU}}
\newcommand{\rd}{\mathrm{RD}}
\newcommand{\Ends}{\mathrm{Ends}}
\newcommand{\ct}{\mathrm{CT}}

\newcommand{\snm}{s_\mathbf{nm}}
\newcommand{\vep}{\varepsilon}

\DeclareMathOperator*{\limset}{\mathrm{lim}\; \mathrm{set}}
\DeclareMathOperator*{\acset}{\mathrm{acc}}

\usepackage{hyperref}

\usepackage{thmtools}

\numberwithin{equation}{section}

\newtheorem{theorem}[equation]{Theorem}

\newtheorem{thm}{Theorem}

\newtheorem{cor}[thm]{Corollary}

\newtheorem{lemma}[equation]{Lemma}
\newtheorem{proposition}[equation]{Proposition}
\newtheorem{corollary}[equation]{Corollary}

\theoremstyle{definition}
\newtheorem{question}[equation]{Question}

\newtheorem{example}[equation]{Example}

\declaretheorem[style=definition,qed=$\lozenge$,sibling=theorem]{remark}
\declaretheorem[style=definition,qed=$\lozenge$,sibling=theorem]{definition}

\newtheorem{convention}[equation]{Convention}
\newtheorem*{theorem*}{Theorem}

\begin{document}
\title{Cannon--Thurston maps for Anosov foliations}

\author{Ellis Buckminster}
\address{Department of Mathematics\\
	University of Pennsylvania}
\email{\href{mailto:ellis17@sas.upenn.edu}{ellis17@sas.upenn.edu}}

\date{\today}

\begin{abstract}

	Universal circles, introduced by Thurston and Calegari--Dunfield, are not well understood in general. Recently, the author together with Taylor showed that Anosov foliations with branching admit nonconjugate universal circles. We continue the study of these universal circles and show that for an Anosov foliation with branching on a hyperbolic manifold, the leftmost universal circle admits a Cannon--Thurston-type map to the ideal 2-sphere. This is a new type of construction of a Cannon--Thurston map. 
	As a corollary, we show the fundamental group of the manifold acts on the leftmost universal circle with pseudo-Anosov dynamics.

\end{abstract}
\maketitle

\setcounter{tocdepth}{1}
\tableofcontents

\section{Introduction}\label{sec:intro}

Given a closed, fibered hyperbolic 3-manifold $M$, let $\Sigma$ denote a fiber surface. The lift $\wt\Sigma$ of $\Sigma$ to the universal cover $\wt M$ of $M$ can be identified with the hyperbolic plane $\HH^2$. Thus, $\wt\Sigma$ can be compactified with its \emph{ideal circle boundary} $\partial_\infty\Sigma\cong\partial_\infty\HH^2$. This circle admits an action of $\pi_1(M)$ preserving a pair of laminations and is naturally identified with the ideal circle boundary of any other lifted fiber. The monodromy is a pseudo-Anosov homeomorphism on $\Sigma$, and the orbit space $\orb$ of the suspension flow of this map is naturally homeomorphic to $\wt\Sigma$. Furthermore, the ideal circle boundary $\partial\orb$ of $\orb$ is naturally homeomorphic to $\partial_\infty\wt\Sigma$. See \Cref{fig:overview} for an overview of how these structures fit together.

\begin{figure}[h]
	\centering
	\includegraphics[width=0.8\linewidth]{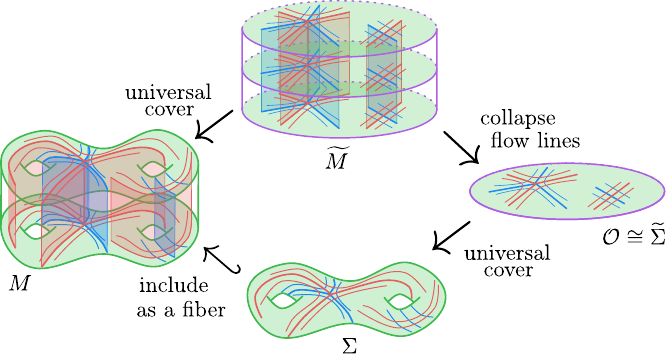}
	\caption{How the different structures fit together in the fibered case.}\label{fig:overview}
\end{figure}

\emph{Universal circles}, defined by Thurston \cite{thurston_circles2} and studied further by Calegari--Dunfield \cite{CalDun_UC}, are an attempt to generalize this picture to a broader class of actions of 3-manifold groups on circles coming from taut foliations. In general, there is much we do not understand about the various possible universal circles for a given foliation, or when two constructions of universal circles result in nonconjugate actions on circles. 

Some rigidity results have been obtained by Calegari for $\R$-covered foliations \cite{Calegari2000Rcovered} and for foliations with one-sided branching \cite{calegari2003foliations}, and more recently by Huang for depth-one foliations transverse to pseudo-Anosov flows without perfect fits \cite{Huang_UC}. 

On the other hand, the author together with Taylor recently showed that for the weak stable or unstable foliations of non $\R$-covered Anosov flows, two constructions of universal circles, due to Calegari--Dunfield \cite{CalDun_UC} and Fenley and Landry--Minsky--Taylor \cite{fenley2012ideal,LMT_UC}, result in nonconjugate actions on circles \cite{buckminster2025universal}. Thus, this class of foliations seems like a good testing ground for various questions about universal circles.

\medskip

Returning to the fibered case, Cannon--Thurston showed that the inclusion of $\wt\Sigma$ into $\wt M$ extends continuously to a $\pi_1(M)$-equivariant map $\partial_\infty\wt\Sigma\to S^2_\infty$, where $S^2_\infty$ is the ideal 2-sphere boundary of $\wt M\cong\HH^3$ \cite{CannonThurston}. As the action $\pi_1(M)\curvearrowright S^2_\infty$ is minimal, the map $\partial_\infty\wt\Sigma\to S^2_\infty$ is necessarily surjective. This map can be obtained by collapsing the $\pi_1(M)$-invariant prelaminations on $\partial_\infty\wt\Sigma$. More generally, Frankel showed that the ideal circle boundary of a \emph{quasigeodesic flow} on a hyperbolic 3-manifold $M$ admits a continuous, $\pi_1(M)$-equivariant map to the ideal 2-sphere boundary of $\wt M$, and the map produced by Cannon--Thurston is a special case of this result \cite{Frankel_thesis}.

Thus, it is natural to ask the following.

\begin{question}\label{q:main}
	When does a universal circle for a foliation on a closed, hyperbolic 3-manifold $M$ admit a Cannon--Thurston-type map, i.e. a continuous, $\pi_1(M)$-equivariant map to $S^2_\infty$?
\end{question}

This is the question we address here, in the case of the \emph{leftmost universal circle} for the weak unstable foliation of a non $\R$-covered Anosov flow.

\subsection*{Results}
Throughout, $\mc C^\ell$ denotes the leftmost universal circle of a non $\R$-covered Anosov foliation in a closed, hyperbolic 3-manifold $M$, and $S^2_\infty$ is the 2-sphere boundary of $\wt M\cong\HH^3$ (see \Cref{sec:background} for definitions). We focus on non $\R$-covered Anosov flows because the case of $\R$-covered Anosov flows is essentially already known; see below. Our main result is the following, which says that the leftmost universal circle $\mc C^\ell$ admits a Cannon--Thurston-type map.

\begin{thm}\label{thma:ct}
	There is a continuous, surjective, $\pi_1(M)$-equivariant map
	\[
	\ct\colon\mc C^\ell\to S^2_\infty.
	\]

\end{thm}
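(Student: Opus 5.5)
The plan is to build $\ct$ by factoring through the boundary of the flow orbit space, where a Cannon--Thurston map is already available. Let $\orb$ be the orbit space of the lifted Anosov flow in $\wt M$, and let $\partial\orb$ be its ideal circle boundary, in the sense of Fenley. By Fenley's work on ideal boundaries of pseudo-Anosov flows, $\partial\orb$ carries a continuous, surjective, $\pi_1(M)$-equivariant map $\partial\orb\to S^2_\infty$ (this is Frankel-type behaviour even without quasigeodesicity). So it suffices to produce a continuous, monotone (hence surjective), $\pi_1(M)$-equivariant map
\[
\Phi\colon \mc C^\ell\to\partial\orb
\]
and then set $\ct$ equal to this composite. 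Surjectivity of $\ct$ would then follow in any case from minimality of $\pi_1(M)\curvearrowright S^2_\infty$, since the image is closed, invariant and nonempty.

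To construct $\Phi$, I use the description of $\mc C^\ell$ as the circle assembled from the ideal boundaries $\partial_\infty\lambda$ of the leaves $\lambda$ of the lifted weak unstable foliation, glued by the leftmost sections. Each leaf $\lambda$ projects to an unstable leaf $\ell_\lambda$ in $\orb$. The ideal points of $\lambda$ are of two kinds: the single common backward endpoint of the flowlines, and the remaining points, which correspond to stable half-leaves crossing $\ell_\lambda$ together with the two ends of $\ell_\lambda$. This gives a natural monotone map $\partial_\infty\lambda\to\partial\orb$ that sends each point to the corresponding ideal point of $\orb$. Concretely, a geodesic ray in $\lambda$ heading to that point projects to a ray in $\orb$ that converges to a point of $\partial\orb$. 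The backward endpoint goes to the ideal point determined by the asymptotics of the flow.

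I then check that these leafwise maps are compatible with the leftmost-section identifications defining $\mc C^\ell$. Special leftmost sections only move between leaves in prescribed ways, and the comparison maps between leaf circles correspond in $\orb$ to sliding along stable leaves, which fix ideal points of $\orb$. Granting compatibility, the leafwise maps glue to a well-defined map $\Phi$. It is equivariant because every ingredient is canonical. It is monotone and continuous because it preserves cyclic order, being a limit of order-preserving leafwise maps, and its image is dense, since the endpoints of stable and unstable leaves are dense in $\partial\orb$.

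The main obstacle is continuity of $\Phi$ at the points of $\mc C^\ell$ that arise from branching of the unstable leaf space. At such points the leftmost section jumps between nonseparated leaves, and there are gaps in $\mc C^\ell$ with no counterpart in the leaf circles. Here I would show that every such gap is mapped to a single point of $\partial\orb$: the ideal point shared by the branching leaves, which is a lozenge or perfect-fit corner. This uses the fact that nonseparated unstable leaves are the sides of a chain of lozenges whose ideal corners agree in $\partial\orb$. As a fallback, if this holds only for the composite with $\partial\orb\to S^2_\infty$, I would argue directly in $S^2_\infty$. In that case I would use hyperbolicity: the sides of the lozenges in such a chain are at bounded Hausdorff distance, so they limit to the same point of $S^2_\infty$.
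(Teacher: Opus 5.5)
Your reduction fails at its first step: the leafwise maps $\partial_\infty\lambda\to\partial\orb$ you describe do not exist. Via the stitching map (Theorem C of \cite{buckminster2025universal}), $\partial_\infty\lambda\ssm\{\snm(\lambda)\}$ is identified with the unstable leaf $\ell\subset\orb$ itself. A marker point is the forward endpoint of an orbit $\alpha$, and a leafwise ray toward it is asymptotic to the forward ray of $\alpha$. Since orbits in an unstable leaf spread apart in forward time, the projection of that ray converges to the \emph{interior} point $\Theta(\alpha)\in\orb$, not to a point of $\partial\orb$. The nonmarker point compactifies both ends of $\ell$, and those ends have distinct ideal points $\ell^+\neq\ell^-$. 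Suppose you instead send a marker point to an endpoint of the stable leaf through it. Then you must choose one of two endpoints, the image lies in one arc of $\partial\orb\ssm\partial\ell$, and $\snm(\lambda)$ would have to go to both $\ell^+$ and $\ell^-$. The only canonical value lives in $S^2_\infty$: it is $i_\lambda=\eplus\circ\Psi$, which is well defined exactly because $e^+$ is constant on stable leaves and $e(\ell^+)=e(\ell^-)$. The same problem persists globally. For the section in \Cref{ex:base_s}, $\Psi$ of its base is $\ell^s_1\cup\ell^s_2$ together with an endpoint of $\ell^u$. Its closure meets $\partial\orb$ in several distinct points, which are identified only by $e$ because they lie in one master set. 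So there is no reason to expect a factorization through $\partial\orb$. Such a map would also be a semiconjugacy between two actions the paper records as nonconjugate, so you would have to identify its gaps correctly. The paper instead maps directly to $S^2_\infty$.

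Even granting leafwise maps, the gluing step is where the real content lies. $\mc C^\ell$ is not assembled from leaf circles: the structure maps go $\pi_\lambda\colon\mc C^\ell\to\partial_\infty\lambda$, so to define anything on $\mc C^\ell$ you must decide where to evaluate each section. A special section is leftmost up and rightmost down over a whole base $\lu(s)\cap\rd(s)$, which may be an interval or a line. Along it the section passes through nonmarker points and switches between markers, i.e.\ between different stable leaves with different ideal endpoints. ``Sliding along stable leaves'' does not cover this, and the evaluations agree only after applying $e$ (\Cref{lem:qle_sprig}, \Cref{cor:ql_i}). A limit section lies in no core and has no basepoint on any leaf. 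The paper shows it is based at a unique end of $\mc L^u$ (\Cref{prop:base}) and defines $\ct$ there by $\wh i=e\circ f$. Your branching claim is also inaccurate: a branching chain has no single common ideal point, since only consecutive leaves share endpoints. Finally, continuity is not just a branching issue. The map $i$ itself is discontinuous, and for $s_j\to s$ the basepoints of the $s_j$ need not converge in the non-Hausdorff, non-locally-compact leaf space. Your ``monotone with dense image'' argument presupposes exactly the monotone circle map you have not built. The missing idea is the paper's:
\begin{enumerate}
\item take a limit point $z_\infty\in\overline{\orb}$ of $\Psi\circ s_j(\lambda_j)$ and convert it to $\lambda_\infty\in\mc L^u\cup\Ends(\mc L^u)$;
\item show, via \Cref{lem:linf_ray} and \Cref{prop:ql_lim}, that $s$ is ql-extremal along the zigzag path from $B(s)$ to $\lambda_\infty$;
\item conclude with \Cref{cor:ql_i} and finiteness of master sets (\Cref{lem:master_set_finite}).
\end{enumerate}
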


As the actions of $\pi_1(M)$ on $\mc C^\ell$ and $\partial\orb$ (the boundary of the orbit space for the Anosov flow) are nonconjugate by \cite[Theorem B]{buckminster2025universal}, the map $\ct$ is different from the Cannon--Thurston map $e\colon\partial\orb\to S^2_\infty$ due to Frankel \cite{Frankel_thesis} and Fenley \cite{fenley2022nonrcoveredanosovflows}.

Forthcoming work of Fenley--Mann--Potrie \cite{FMP} shows that any Cannon--Thurston map is uniformly finite-to-one. Combining this fact with the dynamics of $\pi_1(M)\curvearrowright S^2_\infty$, we obtain the following corollary.

\begin{cor}\label{cor:dyn}
	Every element of $\pi_1(M)$ has some power acting on $\mc C^\ell$ with a positive, finite number of fixed points, which alternate between attractors and repellors.
\end{cor}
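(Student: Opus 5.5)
The corollary follows by combining \Cref{thma:ct} with the finite-to-one property of Fenley--Mann--Potrie and the convergence dynamics of $\pi_1(M)$ on $S^2_\infty$.

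\textbf{Step 1: dynamics on the sphere.} Since $M$ is closed hyperbolic, every nontrivial $g\in\pi_1(M)$ acts on $S^2_\infty$ as a loxodromic element. It has exactly two fixed points, an attractor $g^+$ and a repellor $g^-$, and $g^n\to g^+$ uniformly on compact subsets of $S^2_\infty\smallsetminus\{g^-\}$.

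\textbf{Step 2: pull back to the circle.} By \cite{FMP}, the map $\ct$ is uniformly finite-to-one, so the fibers $F^\pm=\ct^{-1}(g^\pm)$ are finite and nonempty (nonempty by surjectivity). By equivariance, $g$ permutes $F^+\cup F^-$. Some power $h=g^k$ fixes every point of $F^\pm$. Any fixed point $p$ of $h$ on $\mc C^\ell$ satisfies $h\,\ct(p)=\ct(p)$, so $\ct(p)\in\{g^\pm\}$. Hence $\mathrm{Fix}(h)=F^+\cup F^-$, which is finite and nonempty.

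\textbf{Step 3: classify the fixed points.}
\begin{enumerate}
\item Let $I$ be a complementary interval of $\mathrm{Fix}(h)$ in $\mc C^\ell$. Then $h$ acts on $I$ without fixed points, so every point of $I$ moves monotonically from one endpoint to the other under iteration.
\item Take $x\in I$. Then $\ct(x)\ne g^\pm$, so $\ct(h^nx)\to g^+$ and $\ct(h^{-n}x)\to g^-$.
\item Hence the forward limit endpoint of $I$ lies in $F^+$ and the backward limit endpoint lies in $F^-$. Consequently neither endpoint is a parabolic-type fixed point with the same behaviour on both sides, and no interval has both endpoints in $F^+$.
\item Take a point $p\in F^+$ with neighbouring intervals $I_1$ and $I_2$. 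Each $I_i$ has $p$ as an endpoint. On the other hand, $I_i$ must have one endpoint in $F^+$ and one in $F^-$, with $h$ moving points toward the $F^+$ endpoint. So on both sides $h$ pushes points toward $p$, and $p$ is an attractor.
\item Symmetrically, every point of $F^-$ is a repellor.
\end{enumerate}

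\textbf{Step 4: alternation.} By the previous step, each complementary interval joins an attractor to a repellor. Therefore the fixed points alternate around the circle; in particular both $F^+$ and $F^-$ are nonempty, with equal cardinality.

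\textbf{The trivial element.} For $g=1$ the statement is vacuous, or degenerate, so assume $g\ne 1$ throughout. If needed, one could also argue that no power of $g$ is the identity on $\mc C^\ell$, since $\pi_1(M)$ is torsion-free and $\ct$ is finite-to-one; but Step 2 already shows the fixed set is finite, which rules this out.

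\textbf{Main obstacle.} The key input is the finite-to-one property from \cite{FMP}. Without it, $\mathrm{Fix}(h)$ could contain intervals collapsed by $\ct$. Given that input, the remaining argument is the routine convergence-group argument above.
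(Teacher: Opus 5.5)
Your proposal is correct and follows the same route as the paper: you identify the periodic points of $g$ on $\mc C^\ell$ with the finite, nonempty set $\ct^{-1}(g^+)\cup\ct^{-1}(g^-)$ using equivariance, surjectivity, and the uniform finite-to-one property from \cite{FMP}, and you read off attracting and repelling behavior from the north--south dynamics of $g$ on $S^2_\infty$. Your complementary-interval argument in Step 3 (an endpoint that is forward-limiting must map to $g^+$ by continuity) is a more careful version of the paper's brief appeal to continuity when it concludes that points of $\ct^{-1}(g_+)$ are local sinks.
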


\medskip

In general, given a foliation with the continuous extension property, we could ask when the limit sets of the different leaves can be `stitched together' into a single $\pi_1(M)$-equivariant sphere-filling curve. To make this precise, we make the following definition.

\begin{definition}\label{def:ct_for_fol}
	Let $\mc F$ be a taut, coorientable foliation of a closed, hyperbolic 3-manifold $M$ whose leaves have the continuous extension property. Let 
	\[
	i_\l\colon\partial_\infty\l\to S^2_\infty
	\] 
	denote the continuous extension of a leaf $\l\in\mc L$. A \defn{Cannon--Thurston map} for $\mc F$ is a minimal universal circle $(\mc C,\{\pi_\l\}_{\l\in\mc L})$ together with a continuous, $\pi_1(M)$-equivariant map 
	\[
	h\colon\mc C\to S^2_\infty
	\]
	such that 
	\[
	h(s)=i_\l\circ\pi_\l(s)
	\]
	whenever $s\in\mathrm{core}(\pi_\l)$.
\end{definition}

Note that Cannon--Thurston maps coming from fibered 3-manifolds satisfy this definition where $\mc F$ is the foliation by fibers and $\mc C$ is $\partial\orb$, the boundary of the flow space for the suspension flow. More generally, Landry--Taylor show in forthcoming work \cite{LT} that for any foliation $\mc F$ (almost) transverse to a quasigeodesic pseudo-Anosov flow, the map $e\colon\partial\orb\to S^2_\infty$ gives a Cannon--Thurston map for $\mc F$ in the sense of \Cref{def:ct_for_fol}.

Intuitively, this definition says that the sphere-filling curve resulting from the Cannon--Thurston map stitches together the continuous extensions of each leaf in a way that is compatible with the universal circle structure. We show in \Cref{thm:ct} that the map $\ct$ satisfies this definition.

Given an appropriate foliation $\mc F$ and a universal circle $\mc C$ for $\mc F$, \Cref{def:ct_for_fol} can be viewed as an attempt to \emph{define} a Cannon--Thurston map 
\[
h\colon\mc C\to S^2_\infty
\]
as follows. Given a point $s\in \mc C$, if $s\in\core(\pi_l)$, then define $h(s)=i_l\circ\pi_l(s)$. The issues that then arise are the following. The map is not well-defined, as we may have $s\in\core(\pi_\mu)$ for some other leaf $\mu$. Furthermore, not every point of $\mc C$ is guaranteed to be in some core, so we have not defined the map at every point. Finally, it is not clear that the map is continuous, on whatever domain on which it is defined.

\subsection*{$\R$-covered case}
For the weak unstable foliation of an $\R$-covered Anosov flow $\phi$ on a hyperbolic 3-manifold, the rightmost universal circle arises as the flow space boundary $\partial\orb_\psi$ of a \emph{regulating pseudo-Anosov flow} $\psi$, constructed by Calegari \cite[Corollary 5.3.16]{Calegari2000Rcovered} and Fenley \cite{fenley2012ideal} (see also \cite[Section 6]{buckminster2025universal}). This flow has no perfect fits \cite[Theorem G]{fenley2012ideal} and therefore is quasigeodesic \cite[Main Theorem]{Fen16}. Thus, the map $e\colon\partial\orb_\psi\to S^2_\infty$ constructed by Frankel \cite{frankel2015quasigeodesic} defines a Cannon--Thurston map for the leftmost universal circle. This is a Cannon--Thurston map in the sense of \Cref{def:ct_for_fol} by \cite{LT}.

\subsection*{Proof strategy}
The idea behind the construction of the map $\ct$ is this. The Anosov flow $\varphi$ is quasigeodesic by work of Fenley \cite{fenley2022nonrcoveredanosovflows}. Thus, the inclusion $\l\to \wt M$ of each leaf of the lifted unstable foliation $\wt W^u$ extends continuously to a map $i_\l\colon\partial_\infty\l\to S^2_\infty$ on the boundary at infinity of the leaf $\l$. Morally, for each section $s$ of $\mc C^\ell$ we define the Cannon--Thurston map at $s$ by picking a point $s(\l)\in\partial_\infty\l$ for some leaf  $\l$ that `looks like' a basepoint for that section. We then map that point to $S^2_\infty$ using the map $i_\l$. 

Two main issues arise. First, the choice of basepoint is not canonical. Second, we have to verify continuity of the Cannon--Thurston map. The maps $i_\l\colon\partial_\infty\l\to S^2_\infty$ do not vary continuously as you move through the leaf space. Furthermore, for a sequence of sections $\{s_j\}_{j\in\N}$ converging to a section $s$, the basepoints of the $s_j$'s might not converge to a basepoint of $s$, or might fail to converge at all. 

The way we address both of these issues is by studying a type of extremal behavior that sections of $\mc C^\ell$ can exhibit over paths in the leaf space. One place this behavior occurs is over the set of `basepoints' of a section, which is used in proving well-definedness. It is also related to the limiting behavior of sections, which is important in the proof of continuity. Via the \emph{stitching map} $\Phi\colon\orb\to E_\infty$ defined in \cite{buckminster2025universal}, extremal portions of a section are the images of subsets of \emph{master sets} in the orbit space, and therefore have constant images under the maps $i_\l$ by work of Frankel in \cite{frankel_closedorbits}.

\subsection*{Outline of paper}
In \Cref{sec:background} we go over the necessary background on Anosov flows and universal circles. In \Cref{sec:i} we present a few different viewpoints on the maps $i_\l$ and extend these maps to the end space of the leaf space $\mc L^u$. In \Cref{sec:basepoint} we extend the notion of a basepoint of a special section to all sections of $\mc C^\ell$. This is done by studying the \emph{leftmost up} and \emph{rightmost down} regions in $\mc L^u$ for a given section. We show that every section of $\mc C^\ell$ is either special, or is `based at' a unique end of $\mc L^u$. In \Cref{sec:qle} we define the Cannon--Thurston map and introduce the notion of \emph{quadrant-local extremality} for leftmost sections. This is the idea that relates the leftmost up and rightmost down behavior of sections under limits with the structure of master sets in the orbit space, allowing us to prove that the Cannon--Thurston map is well-defined and continuous in \Cref{sec:thm_proof}.

\subsection*{Future directions}
We end this section by discussing some questions raised by \Cref{thma:ct}.

Landry--Minsky--Taylor conjectured that as actions on circles, all universal circles for non $\R$-covered foliations arise as flowspace boundaries of transverse almost pseudo-Anosov flows \cite[Conjecture 1.5]{LMT_UC}. If this is true in our setting, it would mirror the picture of \emph{regulating flows} for skew Anosov foliations developed by Calegari \cite{Calegari2000Rcovered} and Fenley \cite{fenley2002foliations}. They show that for $W^u$ the weak unstable foliation of a skew Anosov flow, the action on the leftmost universal circle is conjugate to the action on the flowspace boundary of a pseudo-Anosov flow transverse to $\psi$. 

One could also ask the following, which takes into account the universal circle structure of $\mc C^\ell$.

\begin{question}\label{q:from_a_flow_uc}
	Is $\mc C^\ell$ isomorphic \emph{as a universal circle} to the flowspace boundary of some almost transverse pseudo-Anosov flow?
\end{question}

A positive answer to \Cref{q:from_a_flow_uc} would require showing that $\mc C^\ell$ is equivariantly homeomorphic to $\partial\orb_\psi$ for some pseudo-Anosov flow $\psi$, and showing that this homeomorphism intertwines the monotone maps coming from the universal circle structure (see \Cref{def:uc}). \Cref{thma:ct} could be viewed as evidence that \cite[Conjecture 1.5]{LMT_UC} and \Cref{q:from_a_flow_uc} should have positive answers.

\subsection*{Acknowledgments}
I would like to thank Danny Calegari, Lucas Kerbs, and Katie Mann for helpful discussions during this project, and Michael Landry for comments on an earlier draft. I would especially like to thank my advisor, Sam Taylor, for encouraging me, for sending me down this path, and for having many conversations with me about the strange rocks and bugs I've found while walking along it.

The completion of this paper was supported by the National Science Foundation under Grant No. DMS–2424139, while the author was in residence at the Simons Laufer Mathematical Sciences Institute in Berkeley, California, during the Spring 2026 semester. During the project, the author was partially supported by a Simons Dissertation Fellowship.

\section{Background}\label{sec:background}

We now review the background we'll need for the paper. In particular, we go over limit sets and accumulation sets, leaf spaces of foliations, Anosov flows and foliations, universal circles, and quasigeodesic flows. 

\subsection{Topological preliminaries and notation}\label{sec:limit_background}
Given a sequence $\{x_j\}_{j\in\N}$ of points in a topological space $X$, we denote by 
\[
\acset_{j\to\infty}x_j
\]
the set of \defn{accumulation points} of the sequence, i.e. the set of points $y\in X$ such that for all open neighborhoods $U$ of $y$, infinitely many elements of the sequence lie in $U$. We denote by 
\[
\limset_{j\to\infty}x_j
\]
the set of \defn{limit points} of the sequence, i.e. the set of points $y\in X$ such that for all open neighborhoods $U$ of $y$, all but finitely many elements of the sequence lie in $U$. As we will be working in non-Hausdorff spaces, it may happen that 
\(
\limset_{j\to\infty}x_j
\)
contains more than one point. In the case that 
\(
\limset_{j\to\infty}x_j
\) 
is a singleton, we may denote it by 
\[
\lim_{j\to\infty}x_j
\] 
and call it the \defn{limit} of the sequence.

\subsection{Leaf spaces}\label{sec:leaf_background}
Given a two-dimensional foliation $\mc F$ of a 3-manifold $M$, we define the \defn{leaf space} of $\mc F$ by 
\[
\mc L=\wt M/\wt{\mc F},
\]
where $\wt M$ is the universal cover of $M$ and $\wt{\mc F}$ is the lift of $\mc F$ to $\wt M$. In general, $\mc L$ is a simply connected, possibly non-Hausdorff 1-manifold. The foliation $\mc F$ is said to be $\R$\defn{-covered} if $\mc L\cong\R$, and \defn{non $\R$-covered} otherwise, i.e. if $\mc L$ is non-Hausdorff. A \defn{cataclysm} is a maximal set of pairwise nonseparated leaves. Two leaves within a cataclysm are said to be \defn{branching}, and a leaf that is not part of a cataclysm is a \defn{nonbranching leaf}. A pair of leaves $\l,\mu$ are said to be \defn{comparable} if there is an embedded interval connecting them in the leaf space, and \defn{incomparable} otherwise (thus, nonseparated leaves are incomparable).

A coorientation on $\mc F$ gives an orientation on $\mc L$. All our foliations will be coorientable, so we always assume $\mc L$ is oriented. In figures, we draw the leaf space so that the orientation points up. Given a pair of comparable leaves $\l,\mu$, we say $\l$ is \defn{below} $\mu$, written $\l<\mu$, if the orientation on $\mc L$ restricted to the embedded interval connecting $\l$ to $\mu$ points from $\l$ to $\mu$. Similarly, we say $\l$ is \defn{above} $\mu$, written $\l>\mu$, if $\mu$ is below $\l$. When $\l<\mu$, we denote by $[\l,\mu]$ the interval in $\mc L$ connecting $\l$ to $\mu$. Given a pair of nonseparated leaves $\l,\l'$, we say they are \defn{branching from above} if they are limit points of a sequence of leaves $\{\l_j\}_{j\in\N}$ above $\l$ and $\l'$. Similarly, we say they are \defn{branching from below} if they are simultaneously limited onto by leaves below them.

\begin{figure}[h]
	\centering
	\includegraphics[width=0.8\linewidth]{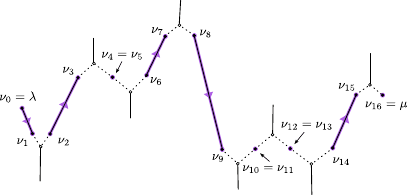}
	\caption{In purple, the zigzag path $\gamma$ connecting $\l$ and $\mu$. The arrows denote the orientation on $\gamma$. The `degenerate' orientations at $\nu_4$, $\nu_{10}$, and $\nu_{12}$ can be deduced from the fact that the orientations alternate between up and down.}\label{fig:broken_path}
\end{figure}

Any pair of points $\l$ and $\mu$ in $\mc L$ can be connected by a unique minimal broken oriented path $\gamma$ from $\l$ to $\mu$. We will call these \defn{zigzag paths}. We use \defn{zigzag ray} to refer to a zigzag path that is infinite in one direction.

In general, $\gamma$ contains a sequence of (possibly nondistinct) leaves $\l=\nu_0,\nu_1,\ldots,\nu_n=\mu$ as in \Cref{fig:broken_path} such that 
\begin{enumerate}
	\item for $j$ odd, $\nu_j$ and $\nu_{j+1}$ are nonseparated, and
	\item for $j$ even, $\nu_j$ and $\nu_{j+1}$ are comparable, and $\gamma$ is the union of the intervals connecting $\nu_j$ and $\nu_{j+1}$.
\end{enumerate}
Intuitively, $\gamma$ travels along an embedded interval, jumps across some number of cataclysms, travels along another interval, and so on. For $j$ even, $\gamma$ determines an orientation on the interval from $\nu_j$ to $\nu_{j+1}$ (even when this is a degernate interval), and the orientations alternate between agreeing and disagreeing with the orientation on $\mc L$. The $\nu_j$'s are called \defn{breakpoints} of the zigzag path $\gamma$. For $j\neq n$ odd, $\nu_j$ is called a \defn{launching leaf}, and $\nu_{j+1}$ is the corresponding \defn{landing leaf}, since $\gamma$ launches from $\nu_j$ and lands on $\nu_{j+1}$ as it jumps across the cataclysm.

Let $\mc H\colon\mc L\to\mc H(\mc L)$ denote the Hausdorffification of $\mc L$, i.e. the minimal quotient that identifies nonseparated points in $\mc L$. We denote by $\Ends(\mc L)$ the \emph{Freudenthal space of ends} of $\mc H(\mc L)$. In our case, we can think of $\Ends(\mc L)$ as equivalence classes of properly embedded rays. See \cite[Section 2.2]{calegari2024zippers} for a discussion of the ends of a topological $\R$-tree that works in our setting.

\subsection{Anosov flows and foliations}\label{sec:anosov_background}

A \defn{topological Anosov flow} on a 3-manifold $M$ is a flow $\varphi$ preserving a pair of transverse two-dimensional foliations $W^s$ and $W^u$, called the \defn{weak stable and unstable foliations}, such that the following three conditions hold.
\begin{enumerate}
	\item Leaves of $W^s$ and $W^u$ are foliated by flow lines and intersect along flow lines.
	
	\item For all sufficiently small $\vep>0$, for $x\in\l$ a leaf of $W^s$ (respectively, a leaf of $W^u$), and for all $y\in \l_\vep(x)$, where $\l_\vep(x)$ is the connected component of $B_\vep(x)\cap\l$ containing $x$, there is an increasing homeomorphism $\sigma\colon\R\to\R$ with $\sigma(0)=0$ such that $d(\phi_t(x),\phi_{\sigma(t)}(y))\to 0$ as $t\to\infty$ (respectively, as $t\to-\infty$).

	\item For all sufficiently small $\vep>0$, there exists $\delta>0$ such that for $x\in\l$ a leaf of $W^s$ (respectively $W^u$), if $y\in\l_\vep(x)$ and $y$ isn't in the same $\vep$-local orbit of $x$, then there exists some time $t$ such that for any $\sigma\colon\R\to\R$ as above, $d(\phi_t(x),\phi_{\sigma(t)}(y))\geq\delta$.
\end{enumerate}

The foliations $W^s$ and $W^u$ are called \defn{Anosov foliations}. The flow $\varphi$ is said to be \defn{$\R$-covered} if $W^s$ (equivalently, $W^u$) is, and \defn{non $\R$-covered} otherwise. We denote the respective leaf spaces by $\mc L^s$ and $\mc L^u$.

Given an Anosov flow $\varphi$, we construct the \defn{flow space} (or \defn{orbit space}) $\orb$ by lifting to the universal cover and collapsing flow lines, i.e.
\[
\orb=\wt M/\wt\varphi
\]
where $\wt M$ is the universal cover of $M$ and $\wt \varphi$ is a lift of $\varphi$ to $\wt M$. We let
\[
\Theta\colon\wt M\to\orb
\]
be the quotient map. By work of Barbot \cite{barbot1995caracterisation} and Fenley  \cite{fenley1994anosov}, the flow space is topologically a plane. The lifted two-dimensional foliations $\wt W^{s/u}$ are preserved by the lifted flow, and therefore descend to one-dimensional foliations $\orb^{s/u}$ of $\orb$, still called the \defn{stable and unstable foliations}. These foliations are typically quite complicated; see \Cref{fig:orb} for an idea of what they might look like. 

\begin{figure}[h]
	\centering
	\includegraphics[width=0.45\linewidth]{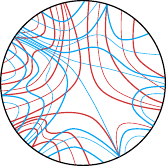}
	\caption{The orbit space of a non $\R$-covered Anosov flow (reproduced from \cite[Figure 3]{buckminster2025universal}). Per \Cref{con:redblue}, $\orb^s$ is in blue and $\orb^u$ is in red.}\label{fig:orb}
\end{figure}

When the foliations $W^{s/u}$ are orientable and coorientable, we get orientations and coorientations on $\orb^{s/u}$. The action $\pi_1(M)\curvearrowright\wt M$ descends to an action $\pi_1(M)\curvearrowright\orb$ preserving the foliations $\orb^{s/u}$. We set the following convention on colors and notation.

\begin{convention}\label{con:redblue}
	In the orbit space, we will always draw leaves of $\orb^s$ in blue and leaves of $\orb^u$ in red. Following \cite{buckminster2025universal}, we use Greek letters (such as $\l$, $\mu$, and $\nu$) for leaves in $\wt W^{s/u}$, and use the corresponding Latin letter (respectively $\ell$, $m$, and $n$) for the corresponding leaf in $\orb^{s/u}$. We preserve subscripts and superscripts. For instance, if $\l^u_j$ is a leaf of $\wt W^u$, then $\Theta(\l^u_j)=\ell^u_j$, a leaf of $\orb^u$.
\end{convention}

By work of Fenley \cite{fenley2012ideal}, the flow space can be compactified with a circle $\partial\orb$, called the \defn{flow space boundary}, such that $\overline{\orb}=\orb\cup\partial\orb$ is homeomorphic to the closed disk. The action of $\pi_1(M)$ on $\orb$ extends continuously to an action on $\overline{\orb}$. 

A leaf $\ell$ of $\orb^s$ or $\orb^u$ will have two `endpoints' in $\partial\orb$. When $\ell$ is oriented, we denote the forward endpoint by $\ell^+$ and the backward endpoint by $\ell^-$. We denote the pair of endpoints by $\partial\ell$. We denote by $\partial\orb^{s/u}$ the set of endpoints of leaves of $\orb^{s/u}$. Given an arbitrary subset $X$ of $\orb$, we denote by $\overline{X}$ the closure in $\overline{\orb}$, and we denote by $\partial X$ the set $\overline{X}\cap\partial\orb$.

\subsubsection{Structures in the orbit space}
A \defn{perfect fit rectangle} is a proper embedding 
\[
[0,1]\times[0,1]\ssm\{(1,1)\}\hookrightarrow\orb
\]
such that the horizontal and vertical foliations are sent to portions of leaves of $\orb^s$ and $\orb^u$, respectively. The leaves $\ell^s$ and $\ell^u$ contained in the image of $\{(1,t)\,\mid \, t\in[0,1)\}$ and $\{(t,1)\,\mid \, t\in[0,1)\}$ are said to \defn{make a perfect fit}. See \Cref{fig:perfectfit}.

\begin{figure}[h]
	\centering
	\includegraphics[width=0.8\linewidth]{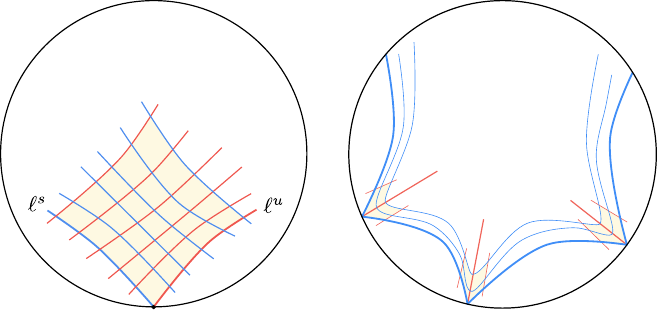}
	\caption{On the left, a perfect fit rectangle. The leaves $\ell^u$ and $\ell^s$ make a perfect fit. On the right, a branching chain, with adjacent perfect fit rectangles in yellow.}\label{fig:perfectfit}
\end{figure}

Nonseparated leaves of $\orb^{u}$ (or $\orb^s$), also called branching leaves, form \defn{branching chains}, which are ordered sets of nonseparated leaves such that each shares an endpoint in $\partial\orb$ with the leaves on either side of it, as in \Cref{fig:perfectfit}. Between adjacent leaves $\ell^u_1$, $\ell^u_2$ in a branching chain of $\orb^u$, there is a stable leaf $\ell^s$ making a perfect fit with each of $\ell^u_1$ and $\ell^u_2$.

We refer the reader to the recent book by Barthelm\'{e}--Mann for more details on Anosov flows \cite{barthelme2025pseudo}.

\subsection{Universal circles}\label{sec:uc_background}
A \emph{universal circle} for a foliation on a 3-manifold $M$ is a way of producing an action of $\pi_1(M)$ on a circle that is related to the geometry of the foliation at infinity. They were first defined by Thurston, and later made rigorous and studied further by Calegari--Dunfield in \cite{CalDun_UC}. We review general universal circles only briefly, as the \emph{stitching map} from \cite{buckminster2025universal} makes the study of universal circles for Anosov foliations much more concrete. We refer the reader to \cite{CalDun_UC} for a thorough treatment of general universal circles.

Let $\mc F$ be a taut, coorientable foliation on a closed, atoroidal 3-manifold $M$, and let $\wt {\mc F}$ be the lift of $\mc F$ to the universal cover $\wt M$. By Candel's theorem on uniformizing foliations \cite{Candel_uniformization}, we can put a metric on $M$ such that the restriction to any leaf of $\mc F$ is hyperbolic. Thus, any leaf $\l$ of $\wt{\mc F}$ can be identified with $\HH^2$, and therefore compactified with an \defn{ideal circle} $\partial_\infty\l$. The set 
\[
\bigcup_{\l\text{ a leaf of }\wt{\mc F}}\partial_\infty\l
\]
can be topologized in a natural way, resulting in the \defn{circle bundle at infinity} $E_\infty$, which is a circle bundle over the leaf space $\mc L$ of $\mc F$. The circles of the form $\partial_\infty\l$ are called \defn{circle fibers}. The action of $\pi_1(M)$ on $\wt M$ by deck transformations defines an action of $\pi_1(M)$ on $E_\infty$ preserving the circle fibers.

\begin{definition}\label{def:uc}
A \defn{universal circle} is a circle $\univ$, together with an action $\pi_1(M)\curvearrowright\univ$ and a collection of \emph{monotone maps} $\{\pi_\l\colon\univ\to\partial_\infty\l\;\mid\;\l\in\mc L\}$ (i.e. maps preserving a circular order) such that the following conditions are satisfied.

\begin{enumerate}
	\item The map $\pi\colon \univ\times\mc L\to E_\infty$ defined by $\pi(z,\l)=\pi_\l(z)$ is continuous.
	\item For all $\gamma\in\pi_1(M)$ and $\l\in\mc L$, the following diagram commutes. 
	\[\begin{tikzcd}
		\univ & \univ \\
		{\partial_\infty\l} & {\partial_\infty\gamma\cdot\l}
		\arrow["{\cdot\gamma}", from=1-1, to=1-2]
		\arrow["{\pi_\l}"', from=1-1, to=2-1]
		\arrow["{\pi_{\gamma\cdot\l}}", from=1-2, to=2-2]
		\arrow["{\cdot\gamma}", from=2-1, to=2-2]
	\end{tikzcd}\]

	\item For incomparable leaves $\l,\mu\in\mc L$, the \emph{core} of the map $\pi_\l$, i.e. the set of points in $\univ$ on which $\pi_\l$ is injective, is contained in a single \emph{gap} of $\pi_\mu$, i.e. the preimage in $\univ$ of a single point in $\partial_\infty\mu$ under $\pi_\mu$.
\end{enumerate}
\end{definition}

By \cite{CalDun_UC}, any such foliation $\mc F$ admits a universal circle called the \emph{leftmost universal circle}, which we now describe.

A \defn{marker} is an embedding $m \colon I\to E_\infty$ that is transverse to the circle fibers, and such that the corresponding leaves of $\wt{\mc F}$ stay a uniformly bounded distance apart along the direction toward infinity that the marker defines. We also use the term `marker' to refer to the image of a marker. A point in $E_\infty$ with a marker through it is called a \defn{marker point}, and is called a \defn{nonmarker point} otherwise.

\begin{figure}[h]
	\centering
	\includegraphics[width=0.6\linewidth]{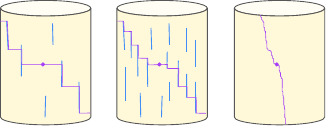}
	\caption{Approximating $s^\ell_p$ by adding in more and more markers and travelling leftmost up, rightmost down without crossing markers (reproduced from \cite[Figure 7]{buckminster2025universal}). Markers are drawn in blue, the point $p$ is the purple dot, and $s^\ell_p$ and its approximants are drawn in purple.}\label{fig:special_section}
\end{figure}

A \defn{section} is a map $s\colon\mc L\to E_\infty$ that intersects each circle fiber exactly once, i.e. a section of $E_\infty$ as a circle bundle over $\mc L$. We often call the image of a section a `section'. An \defn{admissible section} is a section $s$ whose image does not cross markers, though it may coalesce with markers. The \defn{leftmost section} based at a point $p\in E_\infty$, denoted $s^\ell_p$, is the admissible section through $p$ which, over leaves comparable to the leaf containing $p$, travels as left as possible while travelling up from $p$, and travels as right as possible while travelling down from $p$. The \defn{turning corners rule} defines what $s^\ell_p$ does over other leaves in the leaf space. Intuitively, there is a special point (which in our case will be the unique nonmarker point of a leaf) that $s^\ell_p$ lands on as it jumps across a cataclysm. From there, it continues travelling leftmost up and rightmost down from where it landed. Such a section can be defined rigorously by taking limits; see \Cref{fig:special_section} for the idea.

The set of special sections admits an action by $\pi_1(M)$ and a circular order, coming from the fact that two special sections cannot cross. Thus, this circular order can be completed to produce an actual circle $\mc C^\ell$, which is the \defn{leftmost universal circle}. This is a \emph{minimal} universal circle, meaning there is a bijection between points in $\mc C^\ell$ and the sections they define. Thus, we consider sections as points of $\mc C^\ell$. Points in $\mc C^\ell$ are either special sections or \defn{limit sections}, which are limits of special sections under pointwise convergence. The monotone maps are defined by evaluating sections on circle fibers.

Given a set $A\subset E_\infty$ and a set $\gamma\subset\mc L$, we set the notation
\[
A\mid_{\gamma}=A\cap E_\infty\mid_\gamma.
\]
Typically, $A$ will be a section or a marker, and $\gamma$ will be a zizag path or ray.

\subsubsection{Universal circles for Anosov foliations}\label{sec:anosov_uc_background}
We now specialize to the case where $\mc F=W^u$, the unstable foliation of an Anosov flow on $M$. Let $\mc L^u$ be the leaf space of $W^u$, let $E_\infty$ be the circle bundle at infinity for $W^u$, and let $\orb$ be the orbit space of the Anosov flow. For each leaf $\l^u$ of $\wt W^u$, the ideal circle $\partial_\infty\l^u$ contains a unique nonmarker point, which is the common backwards endpoint of every orbit in $\l^u$. These points vary continuously in the leaf space, and trace out a section $\snm$ which we call the \defn{nonmarker section}. Following \cite{buckminster2025universal}, we define the \defn{stitching map} $\Phi\colon\orb\to E_\infty\ssm\snm$ by 
\[
\Phi(\alpha)=\alpha^+,
\]
where $\alpha^+$ is the forwards endpoint of the orbit $\alpha$ in the ideal boundary $\partial_\infty\l^u$ of the leaf $\l^u$ of $\wt W^u$ containing the orbit $\alpha$. Via the following theorem, the stitching map and the structure of the orbit space tell us everything we need to know about $E_\infty$.

\begin{theorem}[Theorem C of \cite{buckminster2025universal}]
	The map $\Phi$ is a $\pi_1(M)$-equivariant homeomorphism sending leaves of $\orb^s$ to markers and sending leaves of $\orb^u$ to sets of the form $\partial_\infty\l\ssm\snm(\l)$ for $\l\in\mc L^u$.
\end{theorem}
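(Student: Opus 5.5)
The plan is to build $\Phi$ leaf by leaf, check that it is a bijection on each circle fiber, and then prove that it is continuous across leaves, which is where the real work lies.

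\emph{Leafwise geometry.} Fix a leaf $\l^u$ of $\wt W^u$ with its Candel hyperbolic metric. By compactness of $M$, the flow lines in leaves of $\wt W^u$ are uniform quasigeodesics in the leafwise metric. I would get this from the uniform contraction/expansion along orbits on the compact manifold; I am taking it as an input rather than proving it. Any two orbits in $\l^u$ are backward asymptotic, so they all share a common backward endpoint $\xi_\l\in\partial_\infty\l^u$. Each orbit $\alpha$ therefore has a well-defined forward endpoint $\alpha^+\neq\xi_\l$.

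\emph{Bijectivity on each fiber.} The orbits in $\l^u$ form the leaf $\ell^u\cong\R$ of $\orb^u$, and these orbits are disjoint and coherently ordered. Hence $\alpha\mapsto\alpha^+$ is monotone. It is injective because distinct orbits in an unstable leaf diverge in forward time, so they cannot be forward asymptotic. For surjectivity onto $\partial_\infty\l^u\ssm\{\xi_\l\}$:
\begin{itemize}
\item As $\alpha$ exits either end of $\ell^u$, the orbit leaves every compact set of $\l^u$.
\item By uniform quasigeodesicity, both endpoints of such an orbit then converge to $\xi_\l$.
\item A monotone injective map $\R\to S^1\ssm\{\xi_\l\}$ with both ends tending to $\xi_\l$ and no jumps is onto.
\end{itemize}
The absence of jumps follows from continuity within the leaf, since nearby orbits fellow-travel for a long time.

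\emph{Identification with the nonmarker section.} Next I show that $\xi_\l$ is the unique nonmarker point. Take two orbits on a common leaf of $\wt W^s$ and on nearby unstable leaves. They are forward asymptotic in $\wt M$, so their forward endpoints lie on a path in $E_\infty$ along which the leaves stay a bounded distance apart. This path is transverse to the fibers because it meets each fiber once. Hence every $\Phi(\ell^s)$ is a marker, and every point other than $\xi_\l$ is a marker point. Conversely, a marker through $\xi_\l$ would produce two orbits on distinct unstable leaves that remain close in the backward direction. Backward asymptotic orbits lie on a common unstable leaf, so this contradicts the leaves being distinct. Thus $\xi_\l=\snm(\l)$, and $\Phi$ is a bijection $\orb\to E_\infty\ssm\snm$. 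Equivariance is immediate from naturality of the constructions under deck transformations.

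\emph{Continuity (main obstacle).} The hardest step is continuity of $\Phi$ transverse to the leaves, using the Calegari--Dunfield topology on $E_\infty$. That topology is defined via leafwise geodesic rays in nearby leaves that fellow-travel on large compact sets. My plan:
\begin{itemize}
\item For $\beta$ near $\alpha$ in $\orb$, the forward rays of $\beta$ and $\alpha$ fellow-travel in $\wt M$ for a long time, by continuity of the flow and the local product structure.
\item By uniform quasigeodesicity, the geodesic representatives in the respective leaves therefore fellow-travel on a long initial segment.
\item This places $\beta^+$ in a small neighborhood of $\alpha^+$.
\end{itemize}
For the inverse, both spaces are locally homeomorphic to an interval times an open interval over small Hausdorff neighborhoods in $\mc L^u$ and in the leaf space of $\orb^u$. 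So $\Phi$ is locally a continuous bijection between 2-manifolds, and invariance of domain shows it is open. Hence $\Phi$ is a homeomorphism.
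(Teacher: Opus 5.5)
The paper does not prove this statement. It is quoted as Theorem C of \cite{buckminster2025universal}, so there is no in-paper argument to compare with, and your proposal has to stand on its own. In outline it does. Uniform leafwise quasigeodesicity of orbits gives the fiberwise bijection $\ell\to\partial_\infty\l\ssm\{\xi_\l\}$. Continuity of $\Phi$ can be checked in the Calegari--Dunfield topology, where $E_\infty|_\tau$ is identified with the unit tangent circles along a transversal $\tau$: push the $C$-neighbourhood of $\alpha|_{[0,T]}$ into the nearby leaf by holonomy, which is nearly isometric, and compare geodesic rays. Invariance of domain then upgrades the continuous bijection to a homeomorphism. If you run that last step chart by chart, taking $\Phi$ on the unstable saturation of $\tau$ (an open subset of the plane $\orb$) into the annulus $E_\infty|_\tau\cong S^1\times\tau$, it also shows the image is open. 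So the graph of $\l\mapsto\xi_\l$ is closed and the nonmarker section is continuous for free, rather than being presupposed. One correction in the fiberwise step: uniform quasigeodesic rays in $\HH^2$ have the same endpoint exactly when they are at bounded Hausdorff distance. Injectivity therefore needs distinct orbits of $\l$ to have \emph{unbounded} forward separation (a standard fact), not merely to fail to be forward asymptotic.

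The step that is not yet an argument is the marker identification, because you use two incompatible notions of marker. In the forward direction you only get that the leaves stay a \emph{bounded} distance apart along $\Phi(\ell^s)$. The orbits are asymptotic, but the leafwise geodesic rays are only $C$-close to them. The converse, by contrast, needs markers to force \emph{small} distance. The bounded-distance notion cannot be the right one. If a stable leaf $\ell^s$ makes a perfect fit with $\ell$, then $\Phi(\ell^s)$ limits onto $\snm(\l)$. Because $\varphi$ is quasigeodesic, the leafwise rays toward the points $\Phi(\ell^s\cap m)$ and toward $\snm(\l)$ all converge in $\wt M$ to $e^+(\ell^s)=e^-(\ell)$, so they stay uniformly boundedly close. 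Under that definition $\Phi(\ell^s)\cup\{\snm(\l)\}$ would be a marker through $\snm(\l)$.

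You need Calegari--Dunfield's $\epsilon$-markers: leafwise geodesic rays joined by transversals of length at most a small $\epsilon$. Both directions then require transferring closeness between geodesic rays and orbits at the Morse scale $C$.

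\emph{Forward direction.} Near-isometric holonomy on the $C$-neighbourhood of the asymptotic forward orbits carries the geodesic ray in one leaf to a path uniformly close to the geodesic ray in the other. This gives $\epsilon$-closeness of the rays.

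\emph{Converse.} Take $\epsilon$ small relative to the holonomy constants at scale $C$. Push the short transversal from the marker ray in $\l$ along a leafwise path of length at most $C$ to $\alpha(-t)$; this yields a point of $\mu$ near $\alpha(-t)$. By local product structure that point lies near the single orbit $\beta=\wt W^s(\alpha)\cap\mu$, which does not depend on $t$. So $\beta$ stays uniformly close to $\alpha$ in backward time, contradicting backward divergence of distinct orbits on a common stable leaf. A marker only gives this uniform closeness, not asymptoticity. The fact to invoke is therefore the local characterization of weak unstable sets, not ``backward asymptotic orbits lie on a common unstable leaf.''
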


In light of this, we set the following convention for drawing $E_\infty$.

\begin{convention}\label{conv:Einfty}
	In figures, we draw the nonmarker section in orange, the circle fibers in red, and the markers in blue, so that the coloring of $\orb$ according to \Cref{con:redblue} lines up with the coloring of $E_\infty$ after applying $\Phi$.
\end{convention}

By convention, whenever we write $\Phi^{-1}(X)$ for $X\subseteq E_\infty$, we really mean $\Phi^{-1}(X\cap\snm)$.

Given a leaf $\l$ of $W^u$ and an interval neighborhood $\tau$ of $\l$ in $\mc L^u$ the circle fiber $\partial_\infty\l$ and $\snm\mid_\tau$ divide a neighborhood of $\snm(\l)$ in $E_\infty\mid_\tau$ into four \defn{quadrants}, which we call the upper left, upper right, lower left, and lower right quadrants, using the orientation from $\orb$. Mapping this picture to $\orb$ using $\Phi^{-1}$, we see that leaves of $\orb^s$ making perfect fits with $\ell$, and branching leaves of $\orb^u$ adjacent to $\ell$ are in one of these four quadrants.

\subsection{Quasigeodesic flows}\label{sec:quasigeodesic_background}

We now review the necessary background on quasigeodesic flows. We restrict to the case where the flow is Anosov for simplicity, but much of this still holds for general quasigeodesic flows.  

Let $M$ be a closed hyperbolic 3-manifold. Since $M$ is hyperbolic, we have $\wt M\cong\HH^3$, and thus $\wt M$ can be compactified with an ideal 2-sphere $S^2_\infty\cong\partial_\infty\HH^3$. By work of Fenley \cite{fenley2022nonrcoveredanosovflows}, an Anosov flow on $M$ is \defn{quasigeodesic}, meaning that (parameterized) orbits of $\wt\varphi$ stay uniformly bounded distance from geodesics in $\wt M$, if and only if the flow is non $\R$-covered. Let $\varphi$ be a non $\R$-covered Anosov flow on $M$.  Since $\phi$ is quasigeodesic, any flowline $\alpha$ of $\wt\varphi$ has well-defined forwards and backwards endpoints $\alpha^\pm\in S^2_\infty$, which are necessarily distinct. Using this, we define the \defn{endpoint maps} $e^\pm\colon\orb\to S^2_\infty$ by 
\[
e^{\pm}(\alpha)=\alpha^\pm.
\]
Note that $e^+$ is constant on leaves of $\orb^s$, while $e^-$ is constant on leaves of $\orb^u$.

Frankel showed that $e^+$ and $e^-$ can be continuously extended to maps 
\[
\epm\colon\overline\orb\to S^2_\infty
\]
and that these maps agree on $\partial\orb$; see \cite{Frankel_thesis}. We let $e\colon\partial\orb\to S^2_\infty$ denote the common map on the boundary. Thus, the maps $\epm$ agree with $e^\pm$ on $\orb$ and with $e$ on $\partial\orb$. The map $e\colon\partial\orb\to S^2_\infty$ defines a $\pi_1(M)$-equivariant sphere-filling curve, generalizing the Cannon--Thurston maps coming from pseudo-Anosov suspension flows.

It will be important for us the understand the preimages of points in $S^2_\infty$ under $\eplus$ and $\emin$. Following Frankel in \cite{frankel_closedorbits}, given a point $z\in S^2_\infty$, we call 
\[
\mc X=\left(\eplus\right)^{-1}(z)\cup\left(\emin \right)^{-1}(z)
\]
the \defn{master set rooted at $z$}. As $\eplus$ (resp. $\emin$) is constant on leaves of $\orb^s$ (resp. $\orb^u$), we have that $\mc X\cap\orb$ is a union of leaves of $\orb^{s/u}$. Since the flow is quasigeodesic, a flowline has distinct forward and backward limit points on $S^2_\infty$. Thus, $\mc X\cap\orb$ is a union of \emph{disjoint} leaves of $\orb^{s/u}$. Since $\eplus$ and $\emin$ agree on $\partial\orb$, we have that if two leaves of $\orb^{s/u}$ share an endpoint in $\partial\orb$, then either both are in $\mc X$ or neither are. 

Putting all this together, we see that if $\mc X$ contains a point $p\in\orb$, then $\mc X$ contains either $\ell^s$, the stable leaf through $p$, or $\ell^u$, the unstable leaf through $p$. Suppose $\mc X$ contains $\ell^s$. Then $\mc X$ also contains any leaves of $\orb^{s/u}$ connected to $\ell^s$ by a chain of leaves sharing endpoints in $\partial\orb$, as in \Cref{fig:master_set}, as well as the endpoints of all these leaves. By \cite[Lemma 5.2]{frankel_closedorbits}, master sets are connected. Thus, this process builds the entire master set. If $\mc X$ does not contain any points in $\orb$, then it must be a single point in $\partial\orb$.

By work of Fenley \cite{Fen16, fenley1998structure}, we have the following.

\begin{lemma}\label{lem:master_set_finite}
	For $\varphi$ a non $\R$-covered Anosov flow on $M$ a closed, hyperbolic 3-manifold, every master set contains finitely many leaves of $\orb^{s/u}$.
\end{lemma}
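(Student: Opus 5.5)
The plan is to reduce \Cref{lem:master_set_finite} to two finiteness statements from Fenley's structure theory \cite{Fen16, fenley1998structure}, using the combinatorial description of master sets given just above. Fix $z\in S^2_\infty$ and let $\mc X$ be the master set rooted at $z$. If $\mc X\cap\orb=\emptyset$, then $\mc X$ is a single point of $\partial\orb$ and there is nothing to prove. Otherwise, by the discussion preceding the lemma and by connectedness of master sets \cite[Lemma 5.2]{frankel_closedorbits}, the leaves of $\orb^{s/u}$ in $\mc X$ are the vertices of a connected graph $G$. In $G$ two leaves are joined by an edge when they share an ideal endpoint in $\partial\orb$. It therefore suffices to show that $G$ is finite.

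\emph{Step 1: each vertex of $G$ has finite degree.} A leaf has exactly two endpoints in $\partial\orb$, so it is enough to show that each $\xi\in\partial\orb$ is an endpoint of only finitely many leaves of $\orb^s\cup\orb^u$. Two distinct leaves sharing an ideal endpoint are related in one of two ways:
\begin{enumerate}
	\item they make a perfect fit, or
	\item they are linked by a finite sequence of perfect fits and nonseparated (branching) leaves.
\end{enumerate}
So the leaves ending at $\xi$ form a chain of perfect fits and branching leaves accumulating at $\xi$. I would invoke Fenley's result \cite{fenley1998structure} that in an atoroidal manifold there are no infinite chains of this type. The idea behind that result is that an infinite chain forces adjacent lozenges that are all invariant under a common deck transformation; this yields a scalloped region, or an infinite string of lozenges, whose stabilizer contains $\Z^2$. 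That contradicts hyperbolicity of $M$. In the same vein, branching chains are finite \cite{fenley1998structure}.

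\emph{Step 2: $G$ has no infinite simple path.} Suppose $G$ were infinite. By Step 1 it is locally finite, so by K\"onig's lemma it contains an infinite simple path of leaves $\ell_0,\ell_1,\ell_2,\ldots$, where consecutive leaves share ideal endpoints and all the leaves are disjoint. The leaves alternate in the stable/unstable pattern dictated by the shared endpoints, and every $\ell_j$ satisfies $\epm(\ell_j)=z$. The goal is to show that this string is again an infinite chain of perfect fits and lozenges, which contradicts \cite{fenley1998structure} exactly as in Step 1. To do this I would use the fact that every leaf in the string has $\eplus$ or $\emin$ equal to $z$. By the quasigeodesic property \cite{Fen16}, the leaves $\ell_j$ must leave every compact subset of $\orb$. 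A nested-intervals argument in $\partial\orb$ should then show that their endpoints converge to a single ideal point $\xi$. Near $\xi$ the consecutive shared endpoints force perfect fits, producing an infinite chain of lozenges or perfect fits, which is excluded.

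I expect Step 2 to be the main obstacle. Producing genuine perfect fits or lozenges from the bare condition ``consecutive leaves share an ideal endpoint'' needs Fenley's characterization of when two leaves in $\orb$ have a common ideal point. It also needs the non-$\R$-covered, no-$\Z^2$ input to be applied uniformly along the string. If the direct geometric argument is too delicate, I would fall back on citing Fenley's finiteness theorem for ``ideal points shared by leaves'' in \cite{Fen16} directly. That theorem asserts that the set of leaves connected to a given leaf through shared ideal points is finite whenever the flow is quasigeodesic on a hyperbolic manifold.
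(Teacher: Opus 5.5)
There is a genuine gap in Step 2, at the final ``which is excluded.'' You get the contradiction from \cite{fenley1998structure} ``exactly as in Step 1,'' that is, from atoroidality and the absence of $\Z^2$ alone. That cannot work, because infinite chains of perfect fits are not ruled out by hyperbolicity of $M$. For a skew $\R$-covered Anosov flow on a closed hyperbolic 3-manifold, every orbit is a corner of a bi-infinite chain of lozenges, so every leaf lies in a bi-infinite chain of perfect fits. The non-$\R$-covered hypothesis therefore has to be used substantively, and your sketch never says how. Leaves leaving compact sets and a nested-interval limit point $\xi$ do not by themselves give a contradiction.

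The other steps are fine. Step 1 is correct in substance: the leaves sharing a single ideal point form a branching chain, and those are finite here. Refining an infinite simple path in $G$ into an infinite chain of perfect fits is also easy: insert the intermediate leaves at each shared ideal point, which are again in $\mc X$.

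The missing idea, which is how the paper argues, is to split into two cases.
\begin{itemize}
\item If $\mc X$ contains infinitely many branching leaves, use two facts from \cite{fenley1998structure}: branching leaves are finite modulo $\pi_1(M)$, and they are periodic. Then some $g$ carries one branching leaf of $\mc X$ to another and hence stabilizes $\mc X$. A nontrivial element fixing a branching leaf also stabilizes $\mc X$. Together these contradict hyperbolicity, since the stabilizer of the root $z$ is too large.
\item Otherwise $\mc X$ contains an infinite chain of perfect fits, and this is where quasigeodesity enters. By \cite[Theorem C]{Fen16}, such a chain produces infinitely many closed orbits in a single free homotopy class. This contradicts the bound on free homotopy classes for quasigeodesic flows \cite[Main Theorem]{Fen16}.
\end{itemize}

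Your fallback is to cite a finiteness theorem for leaves connected through shared ideal points. That is essentially citing the lemma itself, so it does not close the gap unless you can locate that exact statement.
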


\begin{proof}
	By \cite[Main Theorem]{Fen16}, no closed orbit of $\phi$ is nontrivially homotopic to itself, and the number of closed orbits in a free homotopy class is  bounded. Suppose $\mc X$ were a master set with infinitely many leaves. Either $\mc X$ contains infinitely many branching leaves, or it does not.
	
	If $\mc X$ contains infinitely many branching leaves, then some element $g\in\pi_1(M)$ sends a branhing leaf $\ell$ in $\mc X$ to a different branching leaf $\ell'$ in $\mc X$, and therefore fixes $\mc X$. This is because the number of branching leaves of $\orb^{s/u}$ is finite up to the $\pi_1(M)$ action by \cite[Theorem E]{fenley1998structure}. There is a second element $f\in\pi_1(M)$ fixing $\mc X$ because branching leaves are periodic by \cite[Theorem A]{fenley1998structure}. Thus, $f$ and $g$ generate a $\Z^2$ subgroup of $\pi_1(M)$, contradicting the fact that $\pi_1(M)$ is Gromov-hyperbolic.
	
	If $\mc X$ contains only finitely many branching leaves, then $\mc X$ contains an infinite chain of perfect fits. By \cite[Theorem C]{Fen16}, this produces infinitely many closed orbits in a free homotopy class, contradicting the fact that $\phi$ is quasigeodesic.
\end{proof}

\begin{figure}[h]
	\centering
	\includegraphics[width=0.4\linewidth]{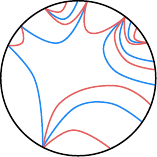}
	\caption{A master set in $\orb$. All leaves of $\orb^{s/u}$ making perfect fits with the leaves shown appear in the figure.}\label{fig:master_set}
\end{figure}

Given a master set $\mc X$ rooted at $z$, the stable leaves in $\mc X$ and their endpoints form the full preimage of $z$ under $\eplus$, and the unstable leaves and their endpoints form the full preimage of $z$ under $\emin$. This is because $\eplus$ and $\emin$ are constant on stable and unstable leaves respectively, and $\eplus$ and $\emin$ agree on $\partial\orb$.

\section{Continuous extensions of leaves}\label{sec:i}

Throughout the paper, fix a non $\R$-covered Anosov flow $\varphi$ on a closed, oriented hyperbolic 3-manifold $M$. Let $\wt \varphi$ be a lift of $\varphi$ to the universal cover $\wt M\cong\HH^3$. Let $S^2_\infty$ denote the ideal 2-sphere boundary of $\wt M$. Let $W^{s/u}$ denote the stable and unstable foliations of $\varphi$ in $M$, and let $\wt W^{s/u}$ be lifts to $\wt M$. Furthermore, assume $W^u$ is oriented and cooriented. Let $\orb$ be the orbit space of $\varphi$.

As in \Cref{sec:quasigeodesic_background}, the flow $\varphi$ is quasigeodesic by \cite{fenley2022nonrcoveredanosovflows}. Given a leaf $\l$ of $\wt W^u$, this allows us to define a continuous extension 
\[
i_\l\colon\partial_\infty\l\to S^2_\infty
\]
of the inclusion $\l\hookrightarrow \wt M$ as follows. For the nonmarker point $\snm(\l)$, we define
\[
i_\l(\snm(\l))=\alpha^-
\]
for $\alpha$ any orbit in $\l$. For a marker point $z\in\partial_\infty\l$, let $\alpha_z$ be orbit in $\l$ limiting to $z$ in forward time. Then we define
\[
i_\l(z)=\alpha_z^+,
\]
which is a continuous extension of the inclusion $\l\hookrightarrow \wt M$.

We can see this as follows. Using our conventions for notation, we have $\ell=\Theta(\l)$, a leaf of $\orb^u$. Given a point $z\in\partial_\infty\l\ssm\{\snm(\l)\}$, again let $\alpha_z$ be the orbit in $\l$ limiting to $z$ in the future. Then $\Theta(\alpha_z)$ is a single point in $\orb$, and 
\[
i_\l(z)=e^+\circ\Theta(\alpha_z).
\]
Moving along the line $\ell$ and applying $e^+$ traces out $i_\l(\partial_\infty\l\ssm\{\snm(\l)\})$. As we approach $\ell^\pm\in\partial\orb$, the points $e^+\circ\Theta(\alpha_z)$ converge to $e(\ell^\pm)$, and we have 
\[
i_\l(\snm(\l))=e(\ell^\pm).
\]
Equivalently, for any point $p\in\ell$, we could take
\[
i_\l(\snm(\l))=e^-(p).
\]

We could further rephrase things to use the stitching map 
\[
\Phi\colon \orb\to E_\infty.
\]
 To do this, we define an extension of $\Phi^{-1}$,
\[
\Psi\colon E_\infty\to\overline{\orb}
\]
by 
\[
\Psi\mid_{E_\infty\ssm\snm}=\Phi^{-1}
\]
and 
\[
\Psi(\snm(\l))=\ell^+.
\]
Of course, the choice of $\ell^+$ over $\ell^-$ was arbitrary, and the map $\Psi$ is not continuous. However, it allows us to succinctly rephrase $i_\l$ as
\[
i_\l=\eplus\circ \Psi\mid_{\partial_\infty\l}.
\]

For convenience, we collate each of these maps $i_\l$ into a single map $i$ as defined below.

\begin{definition}\label{def:i}
	For $\lambda\in\mc L^u$, let 
	\[
	i_\lambda\colon \partial_\infty\lambda\to S^2_\infty
	\]
	be the continuous extension of $\lambda\hookrightarrow \wt M$. Let
	\[
	i\colon E_\infty\to S^2_\infty
	\] 
	be the (discontinuous) function defined by 
	\[
	i(z)=i_\lambda(z)
	\]
	for $z\in\partial_\infty\lambda\subset E_\infty$.
\end{definition}

Using the characterization of $i_\l$ in terms of $\Psi$, we have 
\[
i=\eplus\circ\Psi.
\]

The map $i$ is not continuous, as illustrated by the following example. Let $\{p_j\}_{j\in\N}$ be a sequence of points in $\orb$ limiting to a point $p\in\orb$ on a branching unstable leaf $\ell_1^u$ nonseparated from a leaf $\ell_2^u$, as in \Cref{fig:i_disc}.

\begin{figure}[h!]
	\centering
	\includegraphics[width=0.5\linewidth]{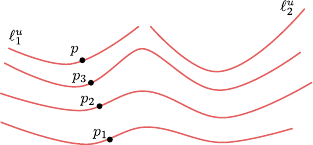}
	\caption{The sequence of points $\{p_j\}_{j\in\N}$ in $\orb$ approaching $p$ in $\ell^u_1$, a branching leaf.}\label{fig:i_disc}
\end{figure}

Then using the stitching map to transfer this setup to $E_\infty$, the sequence $\{\Phi(p_j)\}_{j\in\N}$ approaches both $\Phi(p)$ and $\snm(\l_2^u)$. Under $i$, we have 
\[
i\circ\Phi(p)=e^+(p)
\]
and
\[
i(\snm(\l_2^u))=e^-(\ell_2^u)=e^-(p).
\]
However, $e^+(p)\neq e^-(p)$ since $\varphi$ is quasigeodesic.

Although $i$ is not continuous, the above example is the only thing that goes wrong, in the sense of the following two lemmas.

\begin{lemma}\label{lem:i_restricted_cts}
	The restricted maps
	\[
	i\mid_{s_\text{nm}}\colon s_\text{nm}\to S^2_\infty
	\]
	and
	\[
	i\mid_{E_\infty\ssm s_\text{nm}}\colon E_\infty\ssm s_\text{nm}\to S^2_\infty
	\]
	are continuous.
\end{lemma}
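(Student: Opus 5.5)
We handle the two restrictions separately, using the identity $i=\eplus\circ\Psi$ together with the continuity of $\eplus\colon\overline\orb\to S^2_\infty$ established by Frankel.

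For $E_\infty\ssm\snm$, note that $\Psi$ restricted there equals $\Phi^{-1}$. By Theorem C of \cite{buckminster2025universal}, $\Phi^{-1}$ is a homeomorphism $E_\infty\ssm\snm\to\orb$. Hence
\[
i\mid_{E_\infty\ssm\snm}=e^+\circ\Phi^{-1},
\]
a composition of continuous maps, since $e^+=\eplus\mid_\orb$ is continuous. This case is immediate.

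For $\snm$, recall that $i(\snm(\l))=e^-(p)$ for any $p\in\ell=\Theta(\l)$. The section $\snm\colon\mc L^u\to\snm$ is a homeomorphism onto its image, with inverse the bundle projection. It therefore suffices to show that $\l\mapsto e^-(\ell)$ is continuous on $\mc L^u$. Let $\l_j\to\l$ in $\mc L^u$; here $\mc L^u$ is non-Hausdorff, and we only use that each neighborhood of $\l$ eventually contains the $\l_j$. Fix $p\in\ell$. Because $\orb^u$ is a foliation of $\orb$, a small transversal through $p$ (an arc of the stable leaf through $p$) meets every unstable leaf in a neighborhood of $\ell$ in the leaf space. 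So for large $j$ we can choose $p_j\in\ell_j$ with $p_j\to p$ in $\orb$. Continuity of $e^-$ on $\orb$ then gives
\[
e^-(\ell_j)=e^-(p_j)\to e^-(p)=e^-(\ell).
\]
The same argument works for an arbitrary sequence converging to a point of $\snm$: converge along $\snm$ means the base leaves converge.

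The one point needing care is matching the topology of $\snm\subset E_\infty$ with that of $\mc L^u$, and matching leaf-space neighborhoods with transversals in $\orb$. Both follow from local product structure. A product chart for $E_\infty$ over an interval neighborhood $\tau$ in $\mc L^u$ restricts to a homeomorphism $\snm\mid_\tau\cong\tau$. An interval in $\mc L^u$ corresponds to a stable arc in $\orb$ crossing the corresponding unstable leaves. Nonseparated leaves cause no difficulty, because convergence of a sequence to several limits is tested separately at each limit $\l$, and the argument above is local at $\l$. This is exactly why the discontinuity in \Cref{fig:i_disc} occurs only between $\snm$ and its complement, not within either piece.
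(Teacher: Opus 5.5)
Your proof is correct and follows the same route as the paper. On $E_\infty\ssm\snm$ you identify $i$ with $e^+\circ\Phi^{-1}$ via the stitching map, and on $\snm$ you identify $i$ with $e^-$ descended to $\mc L^u$. You also supply the stable-transversal argument, which the paper leaves implicit, for why $e^-$ descends to a continuous map on the non-Hausdorff leaf space.
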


\begin{proof}
	On $\snm$, the map $i$ is conjugate to the map $e^-\colon\mc L^u\to S^2_\infty$, which is continuous since the map $e^-\colon\orb\to S^2_\infty$ is. On $E_\infty\ssm\snm$, the map $i$ is conjugate to the map $e^+\colon\orb\to S^2_\infty$ via the stitching map, which is continuous.
\end{proof}

\begin{lemma}\label{lem:i_almost_cts}
	Let $\{z_j\in\partial_\infty\l_j\}_{j\in\N}$ be a sequence of points in $E_\infty$ limiting to a set of points $\{y_1,\ldots,y_n\}$ in circle fibers $\{\partial_\infty\mu_1,\ldots,\partial_\infty\mu_n\}$. If either 
	\begin{enumerate}
		\item $y_k\neq\snm(\mu_k)$ for all $k$, or
		\item $y_k=\snm(\mu_k)$ for all $k$,
	\end{enumerate}
	then 
	\[
	\lim_{j\to\infty}i(z_j)=i(y_k)
	\]
	for all $k$.
\end{lemma}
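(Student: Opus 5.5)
In case (1) we work in the orbit space through $\Phi^{-1}$ and use continuity of $\eplus$ on $\orb$ (it is constant on stable leaves). In case (2) we use continuity of $\emin$ on $\orb$ (it is constant on unstable leaves), together with the fact that limits in the non-Hausdorff leaf space are nonseparated.

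\textbf{Case (1).} Suppose $y_k\neq\snm(\mu_k)$ for all $k$. Then $y_k\in E_\infty\ssm\snm$. Since $\snm$ is closed and the $y_k$ lie off it, at most finitely many $z_j$ lie on $\snm$; discard these.

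Set $p_j=\Phi^{-1}(z_j)$ and $q_k=\Phi^{-1}(y_k)$. Because $\Phi$ is a homeomorphism onto $E_\infty\ssm\snm$, the $p_j$ converge in $\orb$ to each $q_k$. But $\orb$ is a plane, hence Hausdorff, so $n=1$ after all. Lemma~\ref{lem:i_restricted_cts} (continuity of $e^+$) then gives $i(z_j)=e^+(p_j)\to e^+(q_1)=i(y_1)$.

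\textbf{Case (2).} Suppose $y_k=\snm(\mu_k)$ for all $k$, so $i(y_k)=e^-(m_k)$. The projections $\l_j$ converge in $\mc L^u$ to every $\mu_k$, so the $\mu_k$ are pairwise nonseparated and lie in one cataclysm.

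Choose $p_j\in\ell_j$ as follows:
\begin{itemize}
\item if $z_j\notin\snm$, take $p_j=\Phi^{-1}(z_j)$;
\item if $z_j\in\snm$, take $p_j$ to be any point of $\ell_j$ near $m_1$ (possible since $\ell_j\to m_1$ in the leaf space).
\end{itemize}
In either case $i(z_j)=e^+(p_j)$ or $e^-(\ell_j)$. The claim is that $i(z_j)$ and $e^-(\ell_j)$ both converge to $e^-(m_k)$. Once this is shown, note that the leaves $m_k$ share endpoints on $\partial\orb$ along the branching chain. Hence $e^-(m_k)$ is independent of $k$, which is consistent with the statement.

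\textbf{Main obstacle.} For the subsequence with $z_j\in\snm$, we have $i(z_j)=e^-(\ell_j)$. This converges to $e^-(m_1)$ by continuity of $\emin$, choosing $p_j\to$ a point of $m_1$.

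For the subsequence with $z_j\notin\snm$, the points $\Phi(p_j)$ converge to the nonmarker point. In $\orb$ this means $p_j$ escapes to infinity along the unstable leaves $\ell_j$, exiting every compact set, and converging in $\overline{\orb}$ to an endpoint of the chain. To prove this, I would argue as follows.
\begin{itemize}
\item Any compact neighborhood $K$ in $\orb$ of a segment of $m_k$ maps under $\Phi$ to a region bounded away from $\snm$. Hence $p_j\notin K$ eventually.
\item Since $\ell_j\to m_k$, the rays of $\ell_j$ beyond $K$ fellow-travel the rays of the $m_k$ toward their ideal endpoints.
\item So every accumulation point of $p_j$ in $\overline{\orb}$ is an ideal endpoint $m_k^\pm$, or a point of $\partial\orb$ between adjacent leaves of the chain spanned by the perfect-fit stable leaf. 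In the latter case, that point is the shared endpoint.
\end{itemize}
Then continuity of $\eplus$ on $\overline{\orb}$, together with $\eplus=\emin=e$ on $\partial\orb$, gives $i(z_j)=\eplus(p_j)\to e(m_k^\pm)=\emin(m_k)=i(y_k)$.

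The delicate point is controlling the accumulation set of the $p_j$ in $\overline{\orb}$ using the topology of the compactification near a branching chain. Here I would use the quadrant description of neighborhoods of $\snm(\mu_k)$ via $\Phi$.
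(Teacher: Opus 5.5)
Your outline matches the paper's proof. Case (1) uses that $E_\infty\ssm\snm\cong\orb$ is Hausdorff together with continuity of $e^+$. Case (2) splits off the terms on $\snm$, handled by continuity of $e^-$ on $\mc L^u$, and argues that the remaining $p_j=\Phi^{-1}(z_j)$ can only accumulate at ideal endpoints of the $m_k$. Case (1) and the on-$\snm$ terms are fine.

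The gap is in the off-$\snm$ terms. Your first bullet only excludes compact neighborhoods of segments of the $m_k$ themselves. The fellow-travelling claim in your second bullet fails when a subsequence of the $\l_j$ also converges to a leaf $\nu$ of the same cataclysm that is not in $\limset_{j\to\infty}\l_j$. A subsequence of the $p_j$ can then converge to a point $q\in\orb$ on the unstable leaf of $\nu$.

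In that situation the statement, read with the definition of limit set in \Cref{sec:limit_background}, is actually false. Take $\mu_1,\nu$ nonseparated and $p_j\to q$ as in the paper's discontinuity example, so that $\Phi(p_j)\to\Phi(q)$ and $\Phi(p_j)\to\snm(\mu_1)$. Interleave $z_{2j+1}=\Phi(p_j)$ with $z_{2j}=\snm(\mu_1)$. Small neighborhoods of $\Phi(q)$ lie over an interval around $\nu$, which misses $\mu_1$. Hence the limit set of $\{z_j\}$ is exactly $\{\snm(\mu_1)\}$, and (2) holds with $n=1$. But $i(z_{2j+1})=e^+(p_j)\to e^+(q)$, whereas $i(z_{2j})=e^-(m_1)=e^-(q)\neq e^+(q)$, since nonseparated leaves share $e^-$ and the flow is quasigeodesic.

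The paper's own proof makes the same leap, from $\limset_{j\to\infty}\l_j=\{\mu_1,\ldots,\mu_n\}$ to $\acset_{j\to\infty}\Phi^{-1}(z_j'')\subset\overline{\{m_1,\ldots,m_n\}}\cap\partial\orb$. So no argument can close this step for the statement as written.

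What is true is the version where (2) also requires every accumulation point of $\{z_j\}$, not just every limit point, to be a nonmarker point. There your argument becomes simpler. Your first-bullet reasoning now applies to any $q\in\orb$: a subsequence $p_{j'}\to q$ would make the marker point $\Phi(q)$ an accumulation point. So every accumulation point of the $p_j$ in $\overline{\orb}$ lies in $\partial\orb$.

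You then do not need the delicate analysis near the branching chain. If $p_{j'}\to w\in\partial\orb$, continuity of $\emin$ on $\overline{\orb}$ and of $e^-$ on $\mc L^u$ gives $e(w)=\emin(w)=\lim e^-(\l_{j'})=e^-(m_k)$. Hence $i(z_{j'})=\eplus(p_{j'})\to\eplus(w)=e(w)=i(y_k)$, and compactness of $\overline{\orb}$ finishes the argument.
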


\begin{proof}
	First, suppose we are in case (1). Then we must have $n=1$, as $E_\infty\ssm\snm$ is homeomorphic to a plane via \cite[Theorem C]{buckminster2025universal}, and in particular is Hausdorff. Since
	\[
	y_1=\lim_{j\to\infty}z_j,
	\]
	we must have that past some index, each $z_j$ is contained in $E_\infty\ssm\snm$. Thus,
	\[
	\lim_{j\to\infty}i(z_j)=i\left(\lim_{j\to\infty}z_j\right)=i(y_1)
	\]
	by \Cref{lem:i_restricted_cts}.
	
	Now suppose instead that we are in case (2). Split the sequence $\{z_j\}$ into two subsequences, $\{z_j'\}$ and $\{z_j''\}$, such that 
	\[
	z_j'\in \snm
	\] 
	and 
	\[
	z_j''\in E_\infty\ssm\snm
	\]
	for all $j$. If either sequence is finite, we ignore it. For the sequence $\{z_j'\}$, we have that 
	\[
	\lim_{j\to\infty}i(z_j')=i(y_k)
	\]
	for all $k$ by \Cref{lem:i_restricted_cts}. 
	
	For the sequence $\{z_j''\}$, pick some $\mu_r\in\{\mu_1,\cdots,\mu_n\}$, and let $\tau$ be a transversal determining an open interval neighborhood $\tau\subset\mc L^u$ of $\mu_r$. Since 
	\[
	\limset_{j\to\infty} \l_j=\{\mu_1,\ldots,\mu_n\},
	\]
	we must have that past some index, all the $\l_j$ are contained in $\tau\subset\mc L^u$. Let $\mc S^u(\tau)$ denote the unstable saturation of $\tau$ in $\orb$, and let $\overline{\mc S^u(\tau)}$ denote its closure in $\overline{\orb}$. Since 
	\[
	\Phi^{-1}(z_j'')\in\mc S^u(\tau),
	\]
	we must have that 
	\[
	\acset_{j\to\infty}\Phi^{-1}(z_j'')\subset\overline{\mc S^u(\tau)}.
	\]
	Since 
	\[
	\limset_{j\to\infty} \l_j=\{\mu_1,\ldots,\mu_n\},
	\]
	we in fact have that 
	\[
	\acset_{j\to\infty}\Phi^{-1}(z_j'')\subset\overline{\{m_1,\ldots,m_n\}},
	\]
	where the closure is taken in $\overline{\orb}$. Furthermore, the accumulation set must be contained entirely in $\partial\orb$. This is because $y_k=\snm(\mu_k)$ for every $k$ by assumption. Thus,
	\[
	\acset_{j\to\infty}i(z_j'')=\acset_{j\to\infty}e^+\circ\Phi^{-1}(z_j'')=e(m_r^+)=i(y_r),
	\]
	as $e$ is a continuous extension of $e^+$ to $\partial\orb$.
\end{proof}

\subsection{Extension to the end space}

We can extend $i$ to $\Ends(\mc L^u)$ as follows. Let $\varepsilon$ be an end of $\mc L^u$, and let $\gamma$ be a zigzag ray in $\mc L^u$ limiting to $\vep$, and suppose $\gamma$ is oriented toward $\vep$. For each leaf $\l$ in $\gamma$, let $\spn^+(\ell)$ be the component of $\partial\orb\ssm\{\ell^+,\ell^-\}$ on the positive side of $\ell$, where the coorientation on $\ell$ is induced by the orientation on $\gamma$. 

\begin{lemma}\label{lem:i_hat_wd}
	Let $\vep\in\Ends(\mc L^u)$, and let $\gamma$ be a zigzag ray in $\mc L^u$ limiting to $\vep$. Then the set
	\[
	\bigcap_{\l\in\gamma}\spn^+(\ell)
	\]
	is a single point in $\partial\orb$, and is independent of the chosen $\gamma$.
\end{lemma}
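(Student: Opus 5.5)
The plan has three steps: show the arcs $\spn^+(\ell)$ are nested, show their intersection is nonempty, and show it is a single point independent of the ray $\gamma$.

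\emph{Nesting.} Write $\gamma$ as a sequence of leaves ordered by the orientation toward $\vep$. For $\l$ before $\mu$ along $\gamma$, I claim that $\mu$ lies on the positive side of $\ell$ (coorientation induced by $\gamma$), and hence $\overline{\spn^+(m)}\subseteq\spn^+(\ell)\cup\partial\ell$. There are two cases.
\begin{enumerate}
\item If $\l,\mu$ lie on the same monotone interval of $\gamma$, then $m$ separates $\ell$ from the region beyond $m$, and both leaves are cooriented the same way. So $\spn^+(m)\subseteq\spn^+(\ell)$.
\item If $\gamma$ jumps across a cataclysm between $\l$ and $\mu$, then the landing leaf and the launching leaf are nonseparated. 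They bound disjoint half-planes in $\orb$, and the coorientation flips exactly so that each points away from the other. Here I use that $\gamma$ is minimal, so it does not backtrack.
\end{enumerate}
In both cases the arcs are nested. Since $\partial\ell$ has two points, strict nesting of closed arcs follows once we pass two distinct nonseparated leaves or move strictly along an interval.

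\emph{Nonemptiness.} The intersection of the closures $\bigcap\overline{\spn^+(\ell)}$ is a nested intersection of nonempty closed arcs in $\partial\orb$, so it is a nonempty closed arc or a point. Strict nesting, together with the observation that a leaf of $\orb^u$ is never cofinal in the limit, will show that the open and closed intersections agree.

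\emph{A single point.} This is the main obstacle. Suppose instead that the intersection contained a nondegenerate arc $J\subset\partial\orb$. Choose a point of $\orb$ whose unstable leaf has both endpoints in the interior of $J$. Such leaves exist because endpoints of $\orb^u$ leaves are dense in $\partial\orb$, a fact from Fenley's compactification for non $\R$-covered Anosov flows. Call this leaf $n$. Every leaf $\ell\in\gamma$ separates $n$ from the starting leaf of $\gamma$, since $n$ sits on the positive side of all of them. Thus the leaves of $\gamma$ form infinitely many distinct leaves of $\mc L^u$ lying in the zigzag path from the start of $\gamma$ to $n$. That path is compact in the Hausdorffification: it is a finite union of intervals and jumps. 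Hence $\gamma$ does not escape to an end, a contradiction with $\gamma$ being properly embedded toward $\vep$.

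The delicate part of this argument is justifying that the leaves of $\gamma$ all separate $n$ from the base. To do this I would argue directly in $\orb$, showing that the positive half-planes $H^+(\ell)$ are nested. An unstable leaf $n$ with $\partial n\subset J$ lies in every $\overline{H^+(\ell)}$, because a leaf of $\orb^u$ disjoint from $\ell$ with endpoints in $\spn^+(\ell)$ lies in $H^+(\ell)$.

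\emph{Independence of $\gamma$.} Two zigzag rays $\gamma,\gamma'$ limiting to the same end $\vep$ eventually coincide. This is the standard fact about rays in $\R$-trees, applied to the Hausdorffification. Rays into the same end share a tail, up to the choice of which nonseparated leaf is used inside a cataclysm, and minimal zigzag paths from different starting points merge. Dropping an initial segment of $\gamma$ does not change the nested intersection. So both rays give the same point.
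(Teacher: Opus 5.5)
Your proof follows the paper's: the arcs $\spn^+(\ell)$ are nested, a putative nondegenerate arc $J$ in the intersection is ruled out using an unstable leaf sitting inside it, and independence of $\gamma$ holds because rays to the same end share a tail. Your route to the contradiction is a more explicit version of the paper's remark that $\{\ell\}_{\l\in\gamma}$ would limit onto $J$ while having no limit points in $\orb$. You argue that every $\ell\in\gamma$ separates the initial leaf from $n$, so $\gamma$ lies in the finite zigzag path from the initial leaf to $n$, whose image in $\mc H(\mc L^u)$ is compact.

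One inference is wrong as stated. Density of endpoints of $\orb^u$-leaves in $\partial\orb$ only gives a leaf with \emph{one} endpoint in $\mathrm{int}\,J$, not both. The distinction is real: for skewed $\R$-covered flows unstable endpoints are dense, yet each unstable leaf has one endpoint on each boundary line of the strip. The paper gets a leaf with both endpoints in $J$ from density of regular periodic points together with the multi sink--source dynamics on $\partial\orb$ from \cite{BFM}.

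Your argument, however, only needs one endpoint. If $n$ has an endpoint in the open arc $\spn^+(\ell)$, then $n\neq\ell$. The closure of the negative half-plane of $\ell$ meets $\partial\orb$ only in $\spn^-(\ell)\cup\partial\ell$, so $n$ lies in $H^+(\ell)$. The rest of your argument then goes through unchanged.

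There is also a smaller slip in your nonemptiness step. Closed arcs are not strictly nested across adjacent leaves of a branching chain, because such leaves share an endpoint. Strict nesting has to come from the nondegenerate intervals of $\gamma$, which occur cofinally. There it holds because comparable leaves cannot share an endpoint: otherwise every leaf between them would share it, contradicting \Cref{lem:master_set_finite}. With that correction, your conclusion that the open and closed intersections agree survives.
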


\begin{proof}
	Suppose we have $\vep$ and $\gamma$ as in the lemma statement. Let $\lambda,\mu\in\gamma$ such that $\mu$ is closer to $\vep$ than $\l$. Then 
	\[
	\spn^+(m)\subset\spn^+(\ell),
	\]
	which shows that 
	\[
	\bigcap_{\l\in\gamma}\spn^+(\ell)
	\]
	is a nonempty, possibly degenerate interval. We aim to show it is a single point, so suppose toward contradiction it is a nondegenerate interval $I$ in $\partial\orb$. 
	
	Since $\gamma$ is a properly embedded zigzag ray in $\mc L^u$, we have that in $\orb$,
	\[
	\limset_{\l\in\gamma}\ell=\varnothing.
	\]
	Thus in $\overline\orb$, we must have that
	\[
	\limset_{\l\in\gamma}\ell=I.
	\]
	However, using the density of regular periodic points \cite[Lemma 2.30]{BFM} and the corresponding multi sink-source dynamics on $\partial\orb$ \cite[Proposition 3.7]{BFM}, we can find an unstable leaf $\ell^u$ with both endpoints in the interior of $I$, contradicting that $\{\ell\}_{\l\in\gamma}$ has no limit points in $\orb$. Thus, 
	\[
	\bigcap_{\l\in\gamma}\spn^+(\ell)
	\]
	is a single point in $\partial\orb$.
	
	To see that this point doesn't depend on the choice of ray $\gamma$, note that since $\mc L^u$ is simply connected, any two zigzag rays $\gamma$ and $\gamma'$ to $\vep$ eventually coincide.
\end{proof}

Thus, we can make the following definition.

\begin{definition}\label{def:i_hat}
	Define
	\[
	f\colon\Ends(\mc L^u)\to \partial\orb
	\]
	by
	\[
	f(\vep)=\bigcap_{\l\in\gamma}\spn^+(\ell)
	\]
	for $\gamma$ a zigzag ray in $\mc L^u$ limiting to $\vep$. Then we define 
	\[
	\wh i\colon\Ends(\mc L^u)\to S^2_\infty
	\]
	by
	\[
	\wh i(\vep)=e\circ f(\vep).
	\]
\end{definition}

\begin{lemma}\label{lem:i_hat_cts}
	Let $\gamma$ and $e$ as above. Then for leaves $\{\l_j\}_{j\in\N}\subset\gamma$ limiting to $\vep$, we have
	\[
	\lim_{j\to\infty}i(z_j)=\wh i(\vep)
	\] 
	for any points $z_j\in\partial_\infty\l_j$.
\end{lemma}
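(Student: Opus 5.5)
The plan is to push everything into $\overline{\orb}$, where $\eplus$ is continuous, and then quote the formula $i=\eplus\circ\Psi$. For each $j$ we have $\Psi(z_j)\in\overline{\ell_j}=\ell_j\cup\{\ell_j^+,\ell_j^-\}$. Indeed, if $z_j\neq\snm(\l_j)$ then $\Psi(z_j)=\Phi^{-1}(z_j)$ lies on the unstable leaf $\ell_j$. Otherwise $\Psi(z_j)=\ell_j^+$. Hence
\[
i(z_j)=\eplus(\Psi(z_j)),\qquad \Psi(z_j)\in\overline{\ell_j}\subset\overline{\orb}.
\]
It therefore suffices to show that the closed sets $\overline{\ell_j}$ converge in $\overline{\orb}$ to the single point $f(\vep)\in\partial\orb$. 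Concretely, every neighborhood $U$ of $f(\vep)$ in $\overline\orb$ should contain $\overline{\ell_j}$ for all large $j$. Continuity of $\eplus$ on $\overline\orb$ then gives $i(z_j)\to\eplus(f(\vep))=e(f(\vep))=\wh i(\vep)$, with no dependence on the choice of $z_j$.

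To prove this convergence, I will use the nested intervals from the proof of \Cref{lem:i_hat_wd}. Write $\spn^+(\ell)$ for the open arc of $\partial\orb$ on the positive side of $\ell$, and let $D^+(\ell)$ be the component of $\overline\orb\ssm\ell$ on the positive side. Its closure $\overline{D^+(\ell)}$ is the closed disk bounded by $\overline{\ell}$ together with the closed arc $\overline{\spn^+(\ell)}$.

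For $\mu$ further along $\gamma$ than $\l$, the leaf $m$ lies in $D^+(\ell)$, so the closed disks are nested, $\overline{D^+(m)}\subset\overline{D^+(\ell)}$. By \Cref{lem:i_hat_wd} the arcs $\overline{\spn^+(\ell)}$ shrink to the point $f(\vep)$. The main step is to show that the disks $\overline{D^+(\ell)}$ also shrink to $f(\vep)$.

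Let $K=\bigcap_{\l\in\gamma}\overline{D^+(\ell)}$. This is a nested intersection of compact sets, so it is compact and nonempty. Suppose $K$ contained a point $p\in\orb$. Then $p$ lies on the positive side of every leaf of $\gamma$ (or on one of them). A small neighborhood of $p$ meets only finitely many... rather, the leaves $\ell$ for $\l\in\gamma$ would separate $p$ from the far end of the ray. I will argue that $\gamma$ being properly embedded in $\mc L^u$ forces $\limset_{\l\in\gamma}\ell=\varnothing$ in $\orb$, exactly as in the proof of \Cref{lem:i_hat_wd}. A point of $K\cap\orb$ would give either a limit of the leaves $\ell$ in $\orb$, or a leaf $\ell^u$ through $p$ separating them from the end. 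The second case would contradict that the leaves $\l\in\gamma$ eventually leave every compact region of the leaf space.

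Hence $K\subset\partial\orb\cap\bigcap\overline{\spn^+(\ell)}=\{f(\vep)\}$. Nested compact sets with intersection $\{f(\vep)\}$ are eventually inside any neighborhood $U$ of $f(\vep)$. Since $\ell_j\subset\overline{D^+(\ell_j)}$, and the $\l_j$ go arbitrarily far along $\gamma$ because they limit to $\vep$, this gives the desired convergence.

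The obstacle I expect is justifying $K\cap\orb=\varnothing$ carefully in the non-Hausdorff setting. I would handle it by taking a point $p\in K\cap\orb$ and its unstable leaf $\ell^u_p$. The leaf $\ell^u_p$ lies on the positive side of all $\ell$ along $\gamma$, so in $\mc L^u$ every leaf of $\gamma$ lies on the zigzag path from the start of $\gamma$ to $\ell^u_p$. That path is a finite zigzag path, so $\gamma$ could not be a properly embedded ray, which is a contradiction.
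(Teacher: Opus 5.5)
Your proposal is correct and follows the paper's argument: write $i=\eplus\circ\Psi$, show $\Psi(z_j)\to f(\vep)$ in $\overline{\orb}$, and conclude by continuity of $\eplus$ together with $\eplus=e$ on $\partial\orb$. The paper simply asserts the convergence $\Psi(z_j)\to f(\vep)$ ``by construction of $f$.'' Your nested-disk argument, which shows $\bigcap_{\l\in\gamma}\overline{D^+(\ell)}=\{f(\vep)\}$ using the tree structure of $\mc L^u$, supplies the justification for that step.
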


\begin{proof}

	By construction of the map $f$, we have that 
	\[
	\lim_{j\to\infty}\Psi (z_j)=f(\vep).
	\]
	Thus,
	\[
	\lim_{j\to\infty}i (z_j)=\lim_{j\to\infty}\eplus\circ\Psi (z_j)=e\circ f(\vep)=\wh i(\vep).
	\]
\end{proof}

\section{Basepoints of sections}\label{sec:basepoint}

\subsection{Intuition}

Let $s^\ell_p$ be a special section based at a point $p\in\partial_\infty\l$ for some $\l\in\mc L^u$. Then, it seems natural enough to attempt to partially define $\ct$ by 
\[
\ct(s^\ell_p)=i(p).
\]
That is, we use the map $i\colon E_\infty\to S^2_\infty$ to map the basepoint $p$ of $s^\ell_p$ to a point in $S^2_\infty$, and that is where we map the section $s^\ell_p$. This is ultimately how we will define $\ct$ on special sections. Of course, basepoints of special sections are not uniquely defined, so we will have to show that this map is well-defined. Our principal concern in this section, however, will be extending this definition to sections that are not special. If we let $s=s^\ell_p$ and let $\partial_\infty\l$ be the circle fiber containing $p$, then the candidate definition for $\ct(s)$ above becomes
\[
\ct(s)=i\circ s(\l).
\]
In other words, we pick out a leaf $\l\in \mc L^u$ that contains a basepoint of $s$, we evaluate $s$ at that leaf, and then we map it via $i$ to get a point in $S^2_\infty$. To extend this to a non-special section $s$, we need a way of picking out something that `looks like' it contains a basepoint of $s$. We do this by considering the structure of the \emph{leftmost up} and \emph{rightmost down regions} for $s$ in the leaf space. This leads us to a definition of the \emph{base} of a section, which is some subset of $\mc L^u\cup\Ends(\mc L^u)$.

Before setting up everything we need to define the base of a section, we discuss an example of a special section and two limit sections, as well as what the base of each will be, in order to give some intuition for the rest of the section.

\begin{example}\label{ex:base_s}
	
	\begin{figure}[h!]
		\centering
		\includegraphics[width=\linewidth]{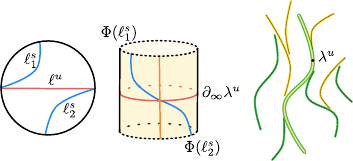}
		\caption{The base of a special section. On the left is the setup in $\orb$, and in the center is a corresponding piece of $E_\infty$. On the right is the coloring of $\mc L^u$, where the base of $s$ is the line colored both yellow and green.}\label{fig:base_exs}
	\end{figure}
	
	Suppose $\ell^u$ is a nonbranching leaf of $\orb^u$ making perfect fits with leaves $\ell^s_1$ and $\ell^s_2$ in the upper left and lower right quadrants, as in \Cref{fig:base_exs}. Suppose further that these three leaves make up an entire master set, i.e. that no other leaves make perfect fits with $\ell^u$, $\ell^s_1$, or $\ell^s_2$. Let $s=s^\ell_{\snm(\l^u)}$, the special section based at $\snm(\l^u)$. Then $s$ follows the marker $\Phi(\ell^s_1)$ as it travels up from $\snm(\l^u)$, and it follows the marker $\Phi(\ell^s_2)$ as it travels down. 
	
	If we let $z$ be a point in the marker $\Phi(\ell^s_1)$, we see that $s$ travels rightmost down from $z$. This is because if we start at $z$ and go rightmost down, we have to stay on the marker until we hit $\snm(\l^u)$. After this point, travelling rightmost down agrees with $s$ by construction. Similarly, $s$ is leftmost up from any point on $\Phi(\ell^s_1)$. Thus, any point in $\Phi(\ell^s_1)\cup\{\snm(\l^u)\}\cup\Phi(\ell^s_2)$ works equally well as a basepoint for $s$. 

	
	Color the leaf space yellow on the places where $s$ is travelling leftmost up, and green in the places where $s$ is travelling rightmost down (we make this more precise in \Cref{def:leftup} and \Cref{def:lu}), as in the right of \Cref{fig:base_exs}. The drawings in the margins are meant to serve as a key. Then there is a line that is colored both yellow and green corresponding to $\Phi(\ell^s_1)\cup\{\snm(\l^u)\}\cup\Phi(\ell^s_2)$, and this is the set which we will define to be the \emph{base} of $s$. Note that for any zigzag path $\gamma$ starting at a point in the base, $\gamma$ will be colored yellow as it travels up and green as it travels down.
\end{example}

\begin{example}\label{ex:base_l1}
	Here is one way to construct a limit section based at an end of $\mc L^u$. Let $\tau$ be properly embedded line in $\mc L^u$ such that $E_\infty\mid_\tau$ contains spiralling markers that travel right up from $\snm$, as in \Cref{fig:base_exl1}. Such pieces of $E_\infty$ arise from \emph{skew-like regions} of the flow space, see \cite[Section 2.11]{barthelme2025pseudo} for more details. The purple markers in the figure are on the frontier of the set of spiralling markers.
	
	Let $\tau^\pm$ denote the ends of the leaf space corresponding to the top and bottom of $\tau$. Let $\{\l_j\}_{j\in\N}$ be a sequence of leaves in  $\tau\subset\mc L^u$ escaping out the end $\tau^-$. Let $s_j=s^\ell_{\snm(\l_j)}$ for all $j$, as in  \Cref{fig:base_exl1}. Since $\mc C^\ell$ is compact, we may pass to a convergent subsequence in $\mc C^\ell$. Let $s$ be the limit of this subsequence; it will consist of all the purple markers on the left half of $\snm$ in \Cref{fig:base_exl1}. Then $s$ won't be a special section, as it is always leftmost up and never rightmost down over the line $\tau$, as in the right of \Cref{fig:base_exl1}. The closest thing to a basepoint is the end $\tau^-\in\Ends(\mc L^u)$. This is because if we let $\gamma$ be any zigzag path limiting to $\tau^-$ on the initial side, we see that $\gamma$ is colored yellow as it travels up and green as it travels down.
	
	Similarly, we may construct a limit section that consists of the purple markers on the right half of $\snm$, and it will be based at the end $\tau^+$.
	
			\AddToShipoutPictureBG*{
		\AtPageLowerLeft{\includegraphics[width=\paperwidth,height=\paperheight]{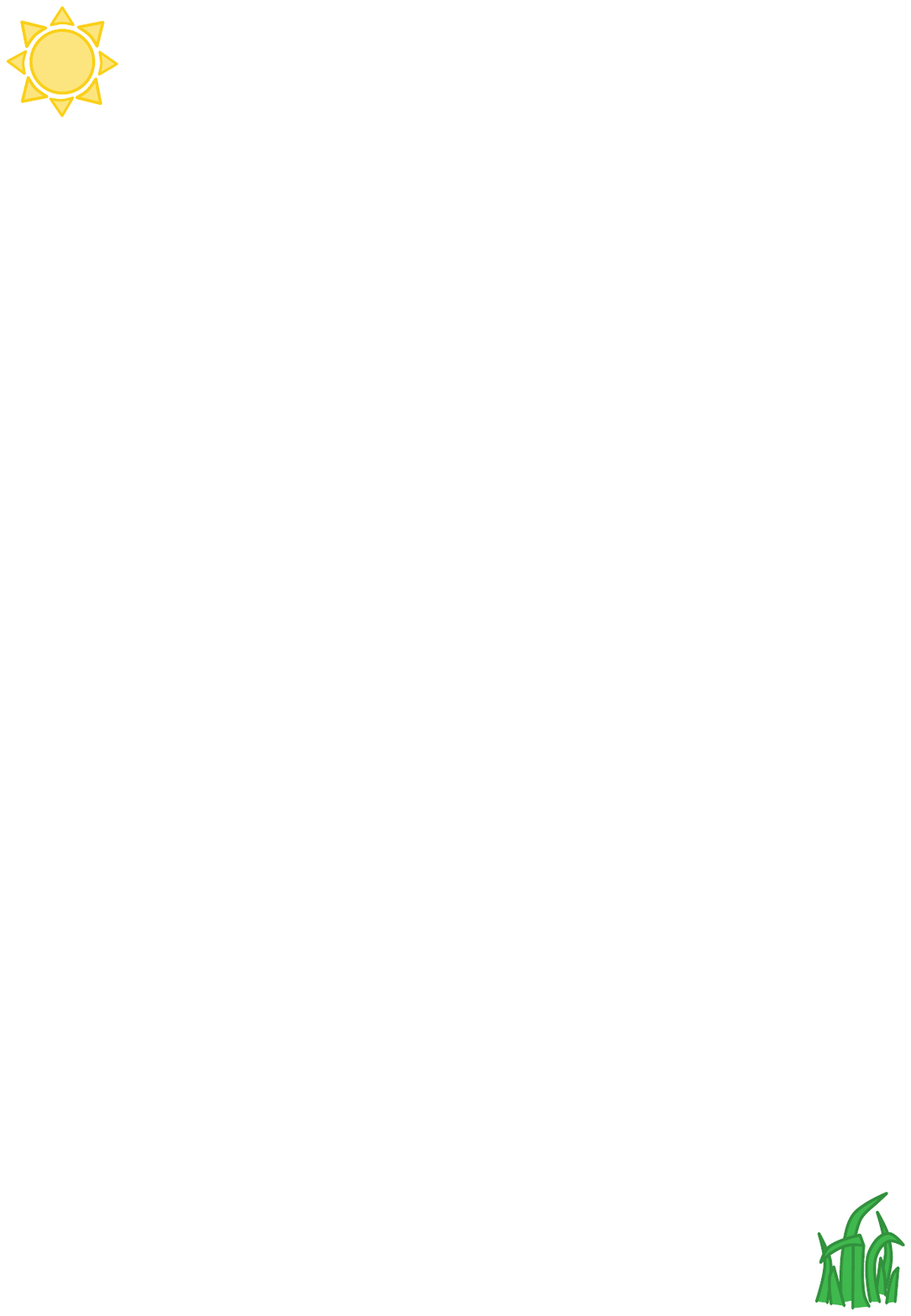}}
	}
	
	\begin{figure}[h!]
		\centering
		\includegraphics[width=0.5\linewidth]{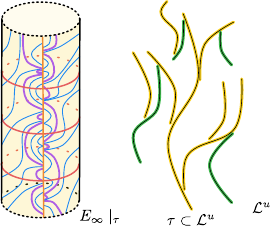}
		\caption{Constructing a limit section `based at' the end $\tau^-$. On the left is $E_\infty\mid_\tau$ with various markers shown. The section $s$ will be all purple markers on the left half of $\snm$. On the right is the coloring induced by $s$ on $\mc L^u$.}\label{fig:base_exl1}
	\end{figure}
\end{example}

\begin{example}\label{ex:base_l2}	
	\begin{figure}[h!]
		\centering
		\includegraphics[width=0.6\linewidth]{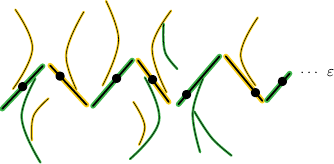}
		\caption{Constructing a limit section `based at' the nonlinear end $\vep$. The zigzag path $\gamma$ is in bold, the points in $\gamma$ show the basepoints for the $s_j$'s, and the coloring is the coloring for $s$.}\label{fig:base_exl2}
	\end{figure}
	
	We can also construct a limit section whose base is a \defn{nonlinear end}, i.e. an end $\vep$ such that any zigzag ray $\gamma$ in $\mc L^u$ limiting to $\vep$ is necessarily broken. Let $\vep$ be such an end, and let $\gamma$ be a zigzag ray limiting to $\vep$. For each maximal embedded interval $I_j$ in $\gamma$, let $s_j$ be a special section based at a point over $I_j$. Thus, the basepoints for the $s_j$'s escape out the end $\vep$. As in the previous example, we may pass to a convergent subsequence limiting to a section $s$. If we observe how $\mc L^u$ is colored using the leftmost up and rightmost down regions of $s$, we see that $s$ is `based at' the end $\vep$, in the sense that zigzag paths starting at $\vep$ are colored correctly. See \Cref{fig:base_exl2}.
\end{example}

			\AddToShipoutPictureBG*{
	\AtPageLowerLeft{\includegraphics[width=\paperwidth,height=\paperheight]{template.pdf}}
}

Although we are yet to formally define the base of a section, we will now state the main result of this section.

\begin{proposition}\label{prop:base}
	Let $s$ be a section of $\mc C^\ell$. Then the base of $s$ is either a point, embedded interval, or line in $\mc L^u$, in which case $s$ is a special section, or it is a single point in $\Ends(\mc L^u)$, in which case $s$ is a limit section.
\end{proposition}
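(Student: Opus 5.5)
We will work with the (to be formalized) leftmost up region $\lu(s)$ and rightmost down region $\rd(s)$ in $\mc L^u$, and take the base of $s$ to be the set of leaves $\l$ (or ends $\vep$) such that every zigzag path or ray issuing from $\l$ (resp.\ limiting to $\vep$ on its initial side) lies in $\lu(s)$ while travelling up and in $\rd(s)$ while travelling down. The plan has three steps: (i) show the base is a connected subset of $\mc L^u\cup\Ends(\mc L^u)$; (ii) show it is nonempty; (iii) show it is either a Hausdorff interval of leaves with $s$ special, or a single end with $s$ a limit section.

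For (i), take two base leaves $\l,\mu$ and the zigzag path $\gamma$ between them. Each leaf $\nu$ on $\gamma$ is reached from $\l$ and from $\mu$ by subpaths of $\gamma$, in opposite directions. The base condition at $\l$ and $\mu$ forces $s$ to be simultaneously leftmost up and rightmost down over the intervals of $\gamma$ adjacent to $\nu$. By the turning corners rule, $s$ can only be extremal in both senses across a cataclysm if it lands at $\snm$. That excludes genuine breakpoints, so $\gamma$ is an embedded interval lying in the base. In $E_\infty$ this says $s|_\gamma$ consists of markers and points of $\snm$ along which $s$ is two-sided extremal. Pulling back by $\Phi$, $s|_\gamma$ corresponds to a chain of stable leaves making perfect fits with a single unstable leaf, as in \Cref{ex:base_s}. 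So the base over $\mc L^u$ is a point, interval, or line, and it cannot simultaneously contain an end as well as a leaf.

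For (ii) and the first half of (iii): if $s=s^\ell_p$ with $p\in\partial_\infty\l$, then $\l$ is in the base directly from the definition of leftmost section and the turning corners rule. Conversely, if a leaf $\l$ lies in the base, then $s$ agrees with $s^\ell_{s(\l)}$ over every comparable leaf and, by turning corners, over all of $\mc L^u$, so $s$ is special. The remaining case is a limit section $s=\lim s_j$ with $s_j=s^\ell_{p_j}$ and $p_j\in\partial_\infty\l_j$. If the $\l_j$ accumulate in $\mc L^u$, pass to a subsequence and use continuity of leftmost-up/rightmost-down behaviour under pointwise limits (markers are closed and cannot be crossed in the limit) to show that a limit leaf lies in the base. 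This contradicts $s$ not being special. Hence the $\l_j$ escape every compact set, and after passing to a subsequence they converge to an end $\vep$, because $\Ends(\mc L^u)$ compactifies the Hausdorffification. Zigzag paths from $\l_j$ eventually coincide with a ray to $\vep$, so the base property passes to the limit on any initial segment, and $\vep$ is in the base. Uniqueness of the end follows from (i): two distinct ends would be joined by a zigzag line, and the two-sided extremality argument would place its interior leaves in the base, making $s$ special.

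The main obstacle is the limit step: showing that pointwise convergence $s_j\to s$ preserves membership in $\lu$ and $\rd$, and does not produce spurious extremality. Extremal behaviour can disappear in the limit, as in the branching phenomenon in \Cref{fig:i_disc}. To handle this, we would translate via $\Phi^{-1}$ into $\orb$ and argue with perfect fit rectangles: failure of extremality is an open condition witnessed by a marker that $s$ crosses, and that marker would then be crossed by $s_j$ for large $j$.
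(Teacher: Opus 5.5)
Your step (ii) has a genuine gap. You claim that if the basepoint leaves $\l_j$ of $s_j=s^\ell_{p_j}$ accumulate at a leaf, then that leaf lies in the base of $s=\lim s_j$. This is false, because two-sided extremality does not pass to pointwise limits. Take $\l_j=\l$ for all $j$ and let $p_j\to p$ monotonically along $\partial_\infty\l$ from one side. Since $\pi_\l$ is continuous and monotone and $\pi_\l(s^\ell_{p_j})=p_j$, the sections $s^\ell_{p_j}$ converge to an endpoint of the gap $\pi_\l^{-1}(p)$. Some gaps are nondegenerate, e.g.\ the gap containing the core of $\pi_\mu$ for $\mu$ incomparable to $\l$. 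For such a gap, one of the two one-sided limits is a section $s_*\neq s^\ell_p$ with $s_*(\l)=p$, hence $\l\notin\lu(s_*)\cap\rd(s_*)$ by \Cref{lem:type_s}. So your suggested remedy, that failure of extremality is an open condition, cannot work. Indeed, the paper's \Cref{prop:ql_lim} is built around a limit $s$ being oriented with its current along a path that is oriented against the $s_j$-current. The same problem undermines your claim that ``the base property passes to the limit'' along rays to $\vep$. The dichotomy ``the $\l_j$ accumulate in $\mc L^u$ or subconverge to an end'' is also unjustified, since $\mc L^u$ is not locally compact. In the continuity proof the paper has to use compactness of $\overline{\orb}$ and \Cref{lem:linf_ray} just to produce a surrogate limit $\l_\infty$.

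What is missing is a limit-free existence argument, plus the geometric input that rules out a leaf-type base for a limit section. The paper proves existence entirely on the leaf space. By \Cref{lem:lu_or_rd} and \Cref{lem:change_at_cataclysm}, there is at most one way to move against the $s$-current from any leaf (\Cref{lem:no_sink_local}), and a local source is a global source (\Cref{lem:no_sink_global}). So following the current backwards either escapes to an end or stops at a leaf. The real content is ruling out the second outcome when $\lu(s)\cap\rd(s)=\varnothing$, i.e.\ a leaf $\l$ from which every zigzag path is oriented with the current although $\l\notin\lu(s)\cap\rd(s)$. Your step (i) does not see this case. Two leaves nonseparated from above can both lie in $\rd(s)\ssm\lu(s)$ without contradicting \Cref{lem:change_at_cataclysm}, so both can be sources. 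Your claim that ``turning corners excludes genuine breakpoints'' is a non sequitur: by \Cref{lem:i_jump}, $s$ lands on $\snm$ at every jump of a path oriented with the current, so landing on $\snm$ excludes nothing. This is why \Cref{lem:type_ie} only confines such a base to one point of $\mc H(\mc L^u)$. The paper disposes of this case in \Cref{lem:no_i}. There, $s(\l)=\snm(\l)$ forces pinching in the relevant quadrant, and one passes to the extreme leaf of the cataclysm and iterates. Finiteness of master sets (\Cref{lem:master_set_finite}, which uses hyperbolicity) makes the process stop at a leaf of $\lu(s)\cap\rd(s)$, contradicting that $s$ is a limit section. Nothing in your proposal plays this role. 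Your uniqueness-of-the-end argument and the implication ``a leaf of $\lu(s)\cap\rd(s)$ makes $s$ special'' are fine and match \Cref{lem:type_ie} and \Cref{lem:type_s}.
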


			\AddToShipoutPictureBG*{
	\AtPageLowerLeft{\includegraphics[width=\paperwidth,height=\paperheight]{template.pdf}}
}

\subsection{Leftmost up and rightmost down regions}\label{sec:lurd}

We now formalize the leftmost up and rightmost down regions of $\mc L^u$ for sections of $\mc C^\ell$ and prove some of the basic properties we will need. Throughout, $s$ will be a section of $\mc C^\ell$.

\begin{definition}\label{def:leftup}
	
	We say $s$ is \defn{leftmost up} from a point $p=s(\l)$ if for all leaves $\mu\geq\l$, we have
	\[
	s(\mu)=s^\ell_p(\mu).
	\]
	Similarly, $s$ is \defn{rightmost down} from $p$ if for all $\mu\leq\l$,
	\[
	s(\mu)=s^\ell_p(\mu).
	\]
\end{definition}

\begin{definition}\label{def:lu}

	The \defn{leftmost up region of $s$} is
	\[
	\text{LU(s)}=\{\lambda\in\mc L^u\;\colon\;s\text{ is leftmost up from }\lambda\}.
	\] 
	Similarly, the \defn{rightmost down region of $s$} is
	\[
	\text{RD(s)}=\{\lambda\in\mc L^u\;\colon\;s\text{ is rightmost down from }\lambda\}.
	\] 
\end{definition}

\begin{convention}\label{conv:lurd}
	In figures, we will draw $\lu(s)$ in yellow and $\rd(s)$ in green. The drawings in the margins serve as a key for this convention.
\end{convention}

The following lemma is immediate from the definition.

\begin{lemma}\label{lem:up_sat}

	If $\lambda\in\text{LU}(s)$, then the upwards saturation of $\lambda$, i.e. $\{\mu\in\mc L^u\;\mid\;\mu\geq\l\}$, is in $\text{LU}(s)$. Similarly, for $\mu\in\text{RD}(s)$, the downwards saturation of $\mu$ is in $\text{RD}(s)$.
\end{lemma}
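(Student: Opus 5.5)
The plan is to unwind \Cref{def:leftup} and reduce the claim to a property of leftmost sections themselves: a leftmost section based at a point of its own upward ray agrees with the original one further up.

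Suppose $\l\in\lu(s)$ and $\mu\geq\l$, and set $p=s(\l)$ and $q=s(\mu)$. Since $\l\in\lu(s)$, we know $s(\nu)=s^\ell_p(\nu)$ for every $\nu\geq\l$; in particular $q=s^\ell_p(\mu)$. To show $\mu\in\lu(s)$, it suffices to show that
\[
s^\ell_q(\nu)=s^\ell_p(\nu)\quad\text{for all }\nu\geq\mu.
\]
Indeed, every $\nu\geq\mu$ also satisfies $\nu\geq\l$, so then $s(\nu)=s^\ell_p(\nu)=s^\ell_q(\nu)$.

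For this I would use the construction of $s^\ell_p$ over leaves comparable to $\l$. Going up from $p$, it is the admissible path that travels as far left as possible without crossing markers. More concretely, it is the limit of the approximants obtained by adding finitely many markers, as in \Cref{fig:special_section}. The key observation is that the leftmost-up path from $p$ restricted to $\{\nu\geq\mu\}$ is exactly the leftmost-up path starting from the point $q$ it passes through over $\mu$. This holds because the ``as left as possible without crossing markers'' rule is local in the upward direction: the constraint over $[\mu,\nu]$ depends only on the starting point $q$ and the markers over that interval. I would check this for each finite-marker approximant and then pass to the limit.

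This argument only covers leaves $\nu\geq\mu$, which are all comparable to $\mu$. Since the upward saturation $\{\nu : \nu\geq\mu\}$ contains only leaves above $\mu$, the turning corners rule never enters.

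The statement for $\rd(s)$ is symmetric: exchange up with down and left with right.

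The only real content, and the main obstacle, is the semigroup-type identity $s^\ell_{s^\ell_p(\mu)}=s^\ell_p$ over $\{\nu\geq\mu\}$. Since the paper calls this lemma ``immediate from the definition,'' I expect a brief appeal to how leftmost-up travel is defined to be enough.
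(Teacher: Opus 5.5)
Your proposal is correct and is the argument the paper intends when it calls the lemma immediate from \Cref{def:leftup}. Using $\nu\geq\mu\geq\l\Rightarrow\nu\geq\l$, everything reduces to the restriction property $s^\ell_{s^\ell_p(\mu)}(\nu)=s^\ell_p(\nu)$ for $\nu\geq\mu$: leftmost-up travel is a greedy rule that can be restarted at any point along the way. If you want to make that step rigorous, argue directly from extremality: a strictly further-left admissible path up from $q$, concatenated with $s^\ell_p$ over $[\l,\mu]$, would be an admissible path from $p$ further left than $s^\ell_p$. Finite-marker approximants are a weaker route, because their values over $\mu$ only converge to $q$, and passing to the limit would need continuity of leftmost sections in the basepoint, which fails in general.
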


\begin{lemma}\label{lem:lu_or_rd}

	Every leaf $\lambda\in\mc L$ is in at least one of $\lu(s)$ or $\rd(s)$.
\end{lemma}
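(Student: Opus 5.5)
The plan is to prove the lemma first for special sections, using the turning corners rule, and then pass to limit sections using the fact that every point of $\mc C^\ell$ is a pointwise limit of special sections.

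\emph{Special sections.} Let $s=s^\ell_p$ with $p\in\partial_\infty\l_0$, and let $\l\in\mc L^u$ be arbitrary. Take the zigzag path $\gamma$ from $\l_0$ to $\l$, with breakpoints $\l_0=\nu_0,\ldots,\nu_n=\l$.

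\begin{enumerate}
	\item If $\l=\l_0$, then $\l$ is in both $\lu(s)$ and $\rd(s)$ by definition.
	\item Otherwise, the final segment of $\gamma$ is oriented either upward or downward, possibly degenerately. In the degenerate case the direction is determined by the alternation of orientations.
	\item Suppose the final segment is upward. Let $\mu\geq\l$. The zigzag path from $\l_0$ to $\mu$ is then the concatenation of $\gamma$ with $[\l,\mu]$, because appending a segment in the same direction creates no backtracking, so the concatenation is still minimal.
	\item The turning corners rule builds $s$ along such a path step by step. It goes leftmost up or rightmost down from $p$ or from the most recent landing point, which is a nonmarker point. Along the last segment and beyond, $s$ is therefore given by travelling leftmost up from the last landing point, or from $p$.
	\item I then need a semigroup property: if $q=s^\ell_{p'}(\nu)$ with $\nu$ above the leaf containing $p'$, then $s^\ell_q$ agrees with $s^\ell_{p'}$ above $\nu$. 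I would get this from the limiting construction in \Cref{fig:special_section}, since the leftmost admissible path above a leaf restricts to the leftmost admissible path above any higher leaf.
	\item Together these give $s(\mu)=s^\ell_{s(\l)}(\mu)$ for all $\mu\geq\l$, so $\l\in\lu(s)$.
	\item The downward case is symmetric and gives $\l\in\rd(s)$.
\end{enumerate}

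\emph{Limit sections.} Let $s=\lim s_j$ pointwise with each $s_j$ special. By the special case and pigeonhole, after passing to a subsequence either $\l\in\lu(s_j)$ for all $j$ or $\l\in\rd(s_j)$ for all $j$. By symmetry, assume the former. For every $\mu\geq\l$ we then have $s_j(\mu)=s^\ell_{s_j(\l)}(\mu)$, with $s_j(\l)\to s(\l)$. It remains to show that $s(\mu)=s^\ell_{s(\l)}(\mu)$ for all $\mu\ge\l$; I expect this to be the main obstacle, because leftmost sections do not depend continuously on their basepoints. I would handle it as follows.

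\begin{enumerate}
	\item Markers are closed and cannot be crossed. A pointwise limit of admissible sections over the upward saturation of $\l$ is therefore admissible, and it passes through $s(\l)$.
	\item By leftmost-ness, this limit is weakly right of $s^\ell_{s(\l)}$ above $\l$.
	\item For the reverse inequality, pass to a subsequence on which $s_j(\l)$ converges monotonically to $s(\l)$ within the circle $\partial_\infty\l$.
	\item If the convergence is from the left, leftmost-up paths from points left of $s(\l)$ stay weakly left of $s^\ell_{s(\l)}$, since admissible leftmost paths cannot cross. Passing to the limit gives the reverse inequality.
	\item If the convergence is from the right, I would argue that any marker separating $s^\ell_{s(\l)}$ from the limit would, near $\partial_\infty\l$, separate $s_j(\l)$ from $s(\l)$ for large $j$. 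I would derive this from the continuity of the marker structure through the stitching map $\Phi$, which is a homeomorphism. This would contradict $s_j(\l)\to s(\l)$, unless the separating marker emanates from $s(\l)$ itself.
	\item That remaining case is exactly where leftmost-up from $s(\l)$ follows the marker. There the two sections agree, so the limit is leftmost up from $s(\l)$.
\end{enumerate}

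This right-hand approach is the step I expect to require the most care.
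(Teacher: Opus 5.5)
Your treatment of special sections is fine. Reading off the turning corners rule along the zigzag path from the basepoint shows directly that $\l\in\lu(s)$ or $\l\in\rd(s)$, according to the direction of the last segment.

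\textbf{The gap is in the limit case.} You try to show that $\l\in\lu(s_j)$ for all $j$ and $s_j\to s$ imply $\l\in\lu(s)$. This implication is false, and it fails exactly at your step (6).
Suppose $\ell$ makes a perfect fit with a stable leaf $\ell^s_1$ in its upper right quadrant (e.g.\ $\ell$ is a side of a lozenge). Then $\Phi(\ell^s_1)$ is a marker leaving $\snm(\l)$ upward on the right.
\begin{enumerate}
\item Take marker points $p_j\in\partial_\infty\l$ converging to $\snm(\l)$ from the right, and set $s_j=s^\ell_{p_j}$. Then $\l\in\lu(s_j)$ for every $j$.
\item Above $\l$, each $s_j$ is forced to follow the marker through $p_j$. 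These markers are images of stable leaves crossing $\ell$ inside the perfect fit rectangle, so they converge to $\Phi(\ell^s_1)$.
\item Hence any limit $s$ has $s(\l)=\snm(\l)$ and follows $\Phi(\ell^s_1)$ just above $\l$.
\item But $s^\ell_{\snm(\l)}$ lies weakly left of $\snm$ above $\l$, because simply following $\snm$ is already admissible. So $\l\notin\lu(s)$.
\end{enumerate}
Your claim that a marker emanating from $s(\l)$ is one that leftmost-up from $s(\l)$ follows is exactly what breaks. A marker leaving $\snm(\l)$ into the upper right quadrant is never taken by the leftmost-up section from $\snm(\l)$. In this example the lemma is rescued by $\l\in\rd(s)$, but an argument that looks at only one side of $\l$ cannot see this.

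\textbf{The missing ingredient is a genuinely two-sided argument.} The paper's proof supplies one and treats special and limit sections uniformly. Set $s'=s^\ell_{s(\l)}$, and use that $s$ is admissible while $s'$ is leftmost up and rightmost down from $s(\l)$.
\begin{itemize}
\item If $\l\notin\lu(s)$, then $s$ peels off to the right of $s'$ somewhere above $\l$ on a line through $\l$.
\item If $\l\notin\rd(s)$, then $s$ peels off to the left of $s'$ somewhere below $\l$ on that line.
\end{itemize}
Together these force $s$ and $s'$ to cross over an interval containing $\l$. This contradicts the fact that sections of $\mc C^\ell$ do not cross one another.

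Your limiting approach can be partly salvaged by the same monotonicity you use in step (4). Convergence of $s_j(\l)$ from the left preserves membership in $\lu$, and convergence from the right preserves membership in $\rd$. However, the mixed cases, such as the one above, still need a two-sided argument like the crossing one.
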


\begin{proof}
	Suppose toward contradiction that there exists a leaf $\l$ in neither $\lu(s)$ nor $\rd(s)$. Let $\tau$ be a line in $\mc L^u$ containing $\l$, and let $s'=s^\ell_{s(\l)}$. Since $\l\notin \lu(s)$, there is some leaf above $\l$ where $s$ and $s'$ disagree; let $\mu^+$ be the infimum in $\tau$ of such leaves. Similarly, let $\mu^-$ be the supremum in $\tau$ of leaves below $\l$ where $s$ and $s'$ disagree. 
	
	As $s'$ diverges from $s$ at $\mu^+$ travelling up, it heads further left than $s$. Similarly, travelling down, it heads further right than $s$ at $\mu^-$. Thus, $s$ and $s'$ cross over a neighborhood of the interval $[\mu^-,\mu^+]\subset\mc L$, contradicting \cite[Section 6.14]{CalDun_UC}.
\end{proof}

The following lemma will be crucial. Intuitively, we will use it to show that the coloring of $\mc L^u$ alternates as we jump across cataclysms.

\begin{lemma}\label{lem:change_at_cataclysm}

	Let $\lambda$ and $\mu$ be nonseparated leaves. If $\lambda$ and $\mu$ are branching from above, then at most one of $\lambda$ and $\mu$ is in $\lu(s)$. If $\lambda$ and $\mu$ are branching from below, at most one is in $\rd(s)$.
\end{lemma}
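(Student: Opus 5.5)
The plan is to argue by contradiction. Suppose $\l$ and $\mu$ are nonseparated and branching from above, and both lie in $\lu(s)$. We will show that $s$ is then forced to take incompatible values on leaves just above the cataclysm. The case of branching from below and $\rd(s)$ is symmetric: reverse the coorientation, which swaps left and right and swaps up and down. So I treat only the first case.

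\textbf{Step 1.} Choose a sequence of leaves $\l_j$ above both $\l$ and $\mu$ with $\limset_{j\to\infty}\l_j\supseteq\{\l,\mu\}$. By \Cref{lem:up_sat}, every $\l_j$ lies in $\lu(s)$. Hence
\[
s(\l_j)=s^\ell_{s(\l)}(\l_j)=s^\ell_{s(\mu)}(\l_j)
\]
for all $j$. In words, the leftmost-up continuations from $s(\l)$ and from $s(\mu)$ agree on all common upper leaves.

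\textbf{Step 2.} Translate this to $\orb$ with the stitching map $\Phi$. In $\orb$, the leaves $\ell$ and $m$ lie in a branching chain of $\orb^u$. Between two adjacent leaves of the chain there is a stable leaf $\ell^s$ making perfect fits with both, and $\Phi(\ell^s)$ is a marker. The leftmost-up path from any point of $\partial_\infty\l$ is blocked from moving past certain markers. I will use the quadrant picture from \Cref{sec:anosov_uc_background}: the stable leaf between $\ell$ and $m$ sits in an upper quadrant of $\ell$ (say upper right) and in the opposite upper quadrant of $m$ (upper left). Its marker $\Phi(\ell^s)$ therefore extends over the $\l_j$ and separates, in each fiber $\partial_\infty\l_j$ for large $j$, the region reachable by travelling leftmost up from $\l$ from the region reachable from $\mu$. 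Concretely, the continuation from $\l$ stays on one side of this marker and the continuation from $\mu$ stays on the other. Both continuations are admissible and cannot cross markers, though they might both coalesce with the marker. I expect this step to be the main obstacle: one must set up the left/right bookkeeping carefully and rule out the degenerate case where both continuations coalesce with the same marker.

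\textbf{Step 3.} To rule out the degenerate case, use leftmost-ness. Travelling up from $\l$, the leftmost section heads as far left as possible, so it is driven away from a marker lying on its right side. It can coalesce only with markers on its left. Thus it can follow $\Phi(\ell^s)$ only if that marker is on its left, which means $\ell^s$ is in the upper left quadrant of $\ell$. Applied to $m$, where $\ell^s$ is on the other side, the same reasoning gives the opposite conclusion. So at most one of the two continuations can coalesce with $\Phi(\ell^s)$, and the other stays strictly on its own side. Alternatively, I would argue via \cite[Section 6.14]{CalDun_UC} that both special sections $s^\ell_{s(\l)}$ and $s^\ell_{s(\mu)}$ lie in $\mc C^\ell$. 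Agreeing above the cataclysm while sitting on opposite sides of the separating marker would force them to cross near $\l_j$, and special sections cannot cross. Either way, the values on $\l_j$ differ for large $j$, contradicting Step 1.
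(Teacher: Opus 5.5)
Your outline agrees with the paper's: if $\l,\mu\in\lu(s)$, then $s^\ell_{s(\l)}$ and $s^\ell_{s(\mu)}$ agree over the leaves $\l_j$ just above the cataclysm, and you must show these two leftmost-up continuations cannot agree. The gap is in the separation step.

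\textbf{The separation fails at $\snm$.} The marker $\Phi(\ell^s)$ does not separate the two continuations. In each fiber $\partial_\infty\l_j$, the two arcs complementary to $\{\snm(\l_j),\,\Phi(\ell^s)\cap\partial_\infty\l_j\}$ both have $\snm(\l_j)$ as an endpoint. So a section running along $\snm$ lies strictly on neither side.

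The leftmost-up continuation from $\snm(\l)$ may run along $\snm$ just above $\l$. Leftmostness pushes it away from $\Phi(\ell^s)$, which lies to its right, but does not force it off $\snm$. Hence your claim in Step 3 that the continuation from $\l$ ``stays strictly on its own side'' is unjustified. Your degenerate-case analysis only excludes both continuations coalescing with $\Phi(\ell^s)$. It misses the case $s(\l)=\snm(\l)$, $s(\mu)=\snm(\mu)$, with $s$ running along $\snm$ above the cataclysm. Using leftmostness only negatively (``it cannot turn toward a marker on its right'') cannot rule this out.

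\textbf{The missing idea is a positive use of leftmostness at $\mu$; this is the core of the paper's proof.} Suppose $s(\mu)=\snm(\mu)$. To be leftmost up from $\mu$, the section must leave $\snm$ immediately along $\Phi(m^s)$, where $m^s$ is the stable leaf making a perfect fit with $m$ in $m$'s upper left quadrant. This marker is reachable from $\snm(\mu)$ and lies to the left of $\snm$. Nothing further left is reachable without crossing markers through $\partial_\infty\mu$. Since $\Phi(m^s)$ lies in $\l$'s upper right quadrant, following it is never leftmost up from $\l$.

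The paper handles the remaining case as follows. If $s(\mu)$ is a marker point, then $s(\l)=\snm(\l)$, because $s(\l_j)$ converges to both $s(\l)$ and $s(\mu)$ and $E_\infty\ssm\snm$ is Hausdorff. Then no marker through $\partial_\infty\mu$ is leftmost up from $\snm(\l)$.

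\textbf{Two smaller points.}
\begin{enumerate}
\item Step 2 refers to \emph{the} stable leaf between $\ell$ and $m$, which presumes $\ell$ and $m$ are adjacent in the branching chain. In general you need the possibly distinct leaves $\ell^s$ (perfect fit with $\ell$ in its upper right quadrant) and $m^s$, as the paper uses.
\item Your alternative via \cite[Section 6.14]{CalDun_UC} yields nothing. Two special sections that agree over the $\l_j$ do not cross, and agreement is exactly what you are trying to exclude.
\end{enumerate}
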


\begin{figure}[h!]
	\centering
	\includegraphics[width=0.5\linewidth]{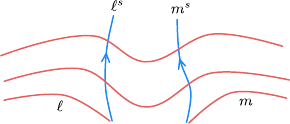}
	\caption{The setup of $\ell$, $m$, $\ell^s$, and $m^s$ in $\orb$ for the proof of \Cref{lem:change_at_cataclysm}.}\label{fig:two_views_setup}
\end{figure}

\begin{figure}[h!]
	\centering
	\includegraphics[width=1\linewidth]{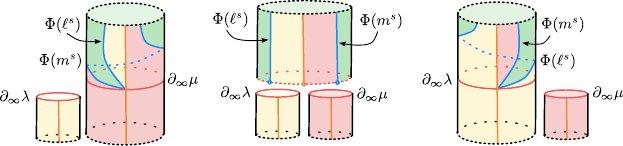}
	\caption{Three views of a neighborhood of $\{\partial_\infty\l,\partial_\infty\mu\}$ in $E_\infty$. The view on the left accurately depicts the topology in a neighborhood of $\partial_\infty\mu$, the right shows the topology around $\partial_\infty\l$, and the center view is `impartial'.}\label{fig:two_views}
\end{figure}

\begin{proof}
	We prove only the first statement; the second is similar. Let $\l$ and $\mu$ be nonseparated leaves in $\mc L$ that are branching from above, and let $\tau\subset\mc L$ be an open interval limiting onto both $\l$ and $\mu$. Suppose $\ell$ and $m$ are arranged as in \Cref{fig:two_views_setup}, i.e. so that $\ell$ is to the left of $m$. Let $\ell^s$ be the leaf of $\orb^s$ making a perfect fit with $\ell$ in $\ell$'s upper right quadrant, and let $m^s$ be the stable leaf making a perfect fit with $m$ in its upper left quadrant. It is possible that $\ell^s=m^s$.
	
	Suppose $s$ is a section of $\mc C^\ell$ that is leftmost up from both $s(\l)$ and $s(\mu)$. If $s(\mu)=\snm(\mu)$, then $s$ must take $\Phi(m^s)$ as it travels up from $\mu$ to be leftmost up from $\mu$. However, following $\Phi(m^s)$ is not leftmost up from $\l$ by \Cref{fig:two_views}.
	
	If $s$ agrees with a marker on $\mu$, then $s(\l)=\snm(\l)$, since $E_\infty\ssm\snm$ is Hausdorff. However, by \Cref{fig:two_views}, no marker through $\partial_\infty\mu$ is leftmost up from $\snm(\l)$. Thus, in either case we derive a contradiction.
\end{proof}

\begin{lemma}\label{lem:marker_sat}
	Suppose $s$ is on a marker $m$ at some leaf $\l\in\mc L^u$. If $\l\in \lu(s)$ (resp. $\l\in\rd(s)$), then every leaf $\nu$ intersecting $m$ is contained in $\lu(s)$ (resp. $\rd(s)$).
\end{lemma}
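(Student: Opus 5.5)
Since a marker is an embedded interval transverse to the circle fibers, the set of leaves $\nu$ meeting $m$ is an embedded interval $J\subset\mc L^u$ containing $\l$. I will prove the leftmost up case; the rightmost down case follows by the symmetric argument with orientations reversed. The first step is to show that $s$ follows $m$ over all of $J$. For leaves $\nu\in J$ with $\nu\geq\l$ this is immediate. Because $s$ is leftmost up from $s(\l)=m(\l)$, we have $s(\nu)=s^\ell_{m(\l)}(\nu)$. The leftmost admissible section starting on a marker and travelling up cannot cross that marker. It cannot go strictly right of it either, since that would not be leftmost: it can always coalesce with the marker. So it agrees with $m$ above $\l$ on $J$. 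For leaves of $J$ below $\l$, I would argue that $s$ still agrees with $m$. Suppose $s$ leaves $m$ at some leaf $\mu<\l$ in $J$, going down. Since $s$ is admissible it cannot cross $m$, so it lies strictly to one side. Then I would compare $s$ with $s^\ell_{m(\nu)}$ for $\nu\in J$ below the divergence point. That section follows $m$ up to $\l$ and then agrees with $s$ above $\l$. Using that sections of $\mc C^\ell$ pairwise do not cross \cite[Section 6.14]{CalDun_UC}, as in the proof of \Cref{lem:lu_or_rd}, I would derive a contradiction unless $s=m$ over $J$. This step is delicate, because $s$ staying on one side of $m$ below $\l$ is not obviously a crossing.

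The cleaner route is to prove the claim only for $\nu\in J$ below $\l$; the leaves of $J$ above $\l$ are already handled by \Cref{lem:up_sat}. Fix such a $\nu$ with $s(\nu)=m(\nu)$, which I take for granted here: it should follow from the non-crossing argument above, or else it is part of what the hypothesis ``$s$ is on $m$'' means if $m$ is taken maximal along $s$. Let $p=m(\nu)$ and $p'=m(\l)$. I must show $s(\mu)=s^\ell_{p}(\mu)$ for all $\mu\geq\nu$. The key observation is that $s^\ell_p$ travels up from $p$ leftmost. It coincides with $m$ on $[\nu,\l]$ by the same ``leftmost up from a marker point stays on the marker'' principle. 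Hence $s^\ell_p$ passes through $p'$ and is leftmost up from $p'$, so $s^\ell_p(\mu)=s^\ell_{p'}(\mu)$ for $\mu\geq\l$. For leaves $\mu\geq\nu$ not comparable to $\l$, I need the turning-corners rule to show that $s^\ell_p$ and $s^\ell_{p'}$ agree there too. Each such $\mu$ is reached by a path going up from $\nu$ to some $\nu'\in[\nu,\l]$ and then up along a different branch. Over that branch, $s^\ell_p$ is leftmost up from $m(\nu')$, and so is $s$, because $s=s^\ell_{p'}$ above $\l$.

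The hard step is precisely that last one: showing $s$ is leftmost up from intermediate points $m(\nu')$ along branches leaving $[\nu,\l]$ upward at $\nu'$. My plan is to reduce it to the statement ``leftmost up from a marker point implies leftmost up from every lower point on the same marker''. I would prove that by approximating $s^\ell$ by finitely many markers, as in \Cref{fig:special_section}. In each approximant, the leftmost-up path starting lower on $m$ must stay on $m$ until $\l$ (it cannot cross $m$, and leftmost forces coalescing). Therefore it is identical to the approximant started at $p'$ on every leaf above $\nu$. Passing to the limit then gives the claim.
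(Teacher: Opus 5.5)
There is a genuine gap, and it is at the step you flag as hard. Fix $\nu<\l$ in the interval $J$ of leaves meeting $m$. Besides $[\nu,\l]$ and $\{\mu\geq\l\}$, the set $\{\mu\geq\nu\}$ contains every branch $\{\mu\geq\nu_2\}$, where $\nu_2$ is nonseparated from some $\nu_1\in(\nu,\l]$ and $\nu_1,\nu_2$ are branching from below.

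The hypothesis $\l\in\lu(s)$ only gives $s=s^\ell_{m(\l)}$ on $\{\mu\geq\l\}$. Since $\nu_2\not\geq\l$, your sentence ``and so is $s$, because $s=s^\ell_{p'}$ above $\l$'' says nothing about $s$ on these branches.

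Your proposed fix does not reach them either. Approximating by finitely many markers can at best show that the two special sections $s^\ell_{m(\nu)}$ and $s^\ell_{m(\l)}$ agree above $\nu$. That leaves the values of $s$ on $\{\mu\geq\nu_2\}$ unconstrained. The statement ``leftmost up from a marker point implies leftmost up from every lower point on the marker'', applied to $s$, is the lemma itself, so the reduction is circular. Non-crossing alone does not help: if $s$ deviates to one side of $s^\ell_{m(\nu)}$ over a single such branch, that is not a crossing.

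What is missing is an input that uses $s\in\mc C^\ell$ together with the geometry of $E_\infty$ at a cataclysm; this is how the paper argues. Suppose some leaf of $J$ is not in $\lu(s)$. By \Cref{lem:lu_or_rd}, such leaves lie in $\rd(s)$. Since $s$ runs along $m$ over $J$ and $\lu(s)$ is upward saturated, leftmost-up can only fail on one of the branches above. So there is a leaf $\nu_2$ with the following properties:
\begin{itemize}
\item $\nu_2$ is nonseparated from some $\nu_1\in J$ lying at or below where $\lu(s)$ stops along $J$;
\item $\nu_1,\nu_2$ are branching from below;
\item $s$ is not leftmost up from $s(\nu_2)$.
\end{itemize}
Then $\nu_2\in\rd(s)$ by \Cref{lem:lu_or_rd}, and $\nu_1\in\rd(s)$ as well. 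This contradicts \Cref{lem:change_at_cataclysm}, which says two leaves branching from below cannot both lie in $\rd(s)$.

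The preliminary point you found delicate, that $s$ stays on $m$ below $\l$, is immediate in this setting. Under the stitching map, away from $\snm$ markers are stable leaves and circle fibers are unstable leaves. So an admissible section can only leave a marker at a nonmarker point, and no marker meets $\snm$.
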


\begin{proof}

	Suppose $m$ and $\l$ are as in the lemma statement, and suppose $\l\in\lu(s)$.  We aim to show that for all leaves $\nu$ intersecting $m$, $s$ is leftmost up from $\nu$. 
	
	The marker $m$ determines an embedded open interval $m\subseteq\mc L^u$ given by the leaves whose ideal circles intersect $m$ in $E_\infty$. Suppose toward contradiction that $m\nsubseteq\lu(s)$, so there exists some $\nu\in m\subseteq\mc L^u$ such that $\nu\notin\lu(s)$. By \Cref{lem:lu_or_rd}, $\nu\in\rd(s)$. Let $\nu_{\inf{}}$ be the infimum among points in $m\subseteq\mc L^u$ not in $\lu(s)$.

	The section $s$ is travelling leftmost up as it travels up the marker $m$, since any admissible section has to stay on the marker. It is also leftmost up at all leaves above $\nu_{\inf}$ by \Cref{lem:up_sat}. Thus, the only way $\nu\notin\lu(s)$ is if there is some leaf $\nu_1\in[\nu,\nu_{\inf}]\subset\mc L^u$ that is nonseparated from a leaf $\nu_2$ such that $\nu_1$ and $\nu_2$ are branching from below, and $s$ is not leftmost up at some point as it travels up from $\nu_2$. 
	
	Since $s$ is not leftmost up as it travels up from $\nu_2$, we have $\nu_2\in \rd(s)$ by \Cref{lem:lu_or_rd}. However, $\nu_1$ is in $\rd(s)$ by \Cref{lem:up_sat} since $\nu$ is, and this violates \Cref{lem:change_at_cataclysm}, as $\nu_1$ and $\nu_2$ are branching from below.
\end{proof}

\subsection{The base of a section}\label{sec:base}
Our next goal is to define the `base' of a section. This will end up being either the intersection of $\lu(s)$ and $\rd(s)$ in the case that $s$ is a special section, as in \Cref{ex:base_s}, or it will be a single point in $\Ends(\mc L^u)$, as in \Cref{ex:base_l1} and \Cref{ex:base_l2}.

To do this, we will consider zigzag paths whose orientations agree or disagree with the coloring on $\mc L^u$, in the sense of the following definition.

\begin{definition}\label{def:current}
	Let $\gamma$ be a zigzag path in $\mc L^u$, and let $s$ be a section of $\mc C^\ell$. We say $\gamma$ is \defn{oriented with the $s$-current} if it is contained in $\lu(s)$ as it travels up and $\rd(s)$ as it travels down. To be more precise, let $I\subseteq\gamma$ be a maximal embedded interval. Then we require $I\subset\lu(s)$ if the orientation on $I$ is upwards, and $I\subseteq\rd(s)$ if the orientation is downward. (Even on degenerate intervals, there is an `orientation' that makes sense). Similarly, $\gamma$ is \defn{oriented against the $s$-current} if it is contained in $\rd(s)$ as it travels up and $\lu(s)$ as it travels down.
\end{definition}

As in the examples at the beginning of this section, we want to intuitively define the base of a section as the leaves (or ends) where every zigzag path starting at that leaf is colored correctly. Thus, we define the base as follows.

\begin{definition}\label{def:B}
	For $s$ a section of $\mc C^\ell$, we define the \defn{base} of $s$, denoted $B(s)$, as follows. If $\lu(s)\cap\rd(s)\neq\varnothing$, we let 
	\[
	B(s)=\lu(s)\cap\rd(s).
	\] 
	Otherwise, we let $B(s)$ be the set of points $\lambda\in\mc L^u\cup\Ends(\mc L^u)$ such that for every zigzag path $\gamma$ starting at $\lambda$, $\gamma\ssm\{\l\}$ is oriented with the $s$-current. 
\end{definition}

To show that $B(s)$ is nonempty, we essentially have to show that there is a global source for the $s$-current on $\mc L^u$. The idea behind showing this will be to start at an arbitrary leaf, flow against the $s$-current until we get stuck, and then show that where we got stuck is a global source. \Cref{lem:no_sink_local} shows that there is at most one way to flow against the $s$-current from a given leaf, while \Cref{lem:no_sink_global} shows that a local source for the $s$-current is in fact a global source.

\begin{lemma}\label{lem:no_sink_local}
	Let $s$ be a section of $\mc C^\ell$ and $\lambda\in\mc L^u$. If $\lu(s)\cap\rd(s)=\varnothing$, then for any two nondegenerate zigzag paths $\gamma_1,\gamma_2$ starting from $\l$ and oriented against the $s$-current, there exists a nondegenerate zigzag path that is an initial subpath of both $\gamma_1$ and $\gamma_2$.

\end{lemma}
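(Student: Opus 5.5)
Under the hypothesis $\lu(s)\cap\rd(s)=\varnothing$, \Cref{lem:lu_or_rd} makes every leaf lie in exactly one of $\lu(s)$ and $\rd(s)$. So the coloring is a genuine two-coloring of $\mc L^u$. The plan is to use this to show that from $\l$ there is essentially only one \emph{direction} in which a zigzag path can begin against the $s$-current. Any two such paths then share a nondegenerate initial segment.

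First I would determine the allowed initial directions. Since $\l$ is in exactly one of $\lu(s)$, $\rd(s)$, assume $\l\in\lu(s)$; the case $\l\in\rd(s)$ is symmetric, with up and down swapped. A path against the current travels up only inside $\rd(s)$. By \Cref{lem:up_sat}, every leaf above $\l$ lies in $\lu(s)$, hence not in $\rd(s)$. So a path starting at $\l$ that first moves up along a nondegenerate interval is impossible.

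Next I would rule out the paths that begin by jumping. A zigzag path whose first breakpoint is a degenerate interval at $\l$ followed by a jump to a nonseparated leaf $\l'$ has a degenerate first interval carrying an orientation. If that orientation is up, we would need $\l\in\rd(s)$, which is false. If it is down, the jump is across a cataclysm branching from below. The path then continues upward from $\l'$, so it needs $\l'\in\rd(s)$. Combined with the color of $\l$, I would check which of these configurations \Cref{lem:change_at_cataclysm} forbids. I expect the outcome to be that every nondegenerate path against the current must begin by descending along a nondegenerate embedded interval below $\l$, and that this interval lies in $\lu(s)$.

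Then I would show the descending interval is unique. Below $\l$ the leaf space branches only at cataclysms branching from above, where the downward direction splits into nonseparated leaves $\mu_1,\mu_2$ that are limits from above. Just below $\l$ there is a unique germ of downward interval, since $\mc L^u$ is a 1-manifold near $\l$. So two paths that both start by descending nondegenerately from $\l$ share a small initial interval $[\l-\epsilon,\l]$. That shared interval is the desired common nondegenerate initial subpath.

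The main obstacle I expect is the careful treatment of degenerate first intervals and of the orientation conventions at breakpoints. In particular, I must verify that a path starting with a jump at $\l$ cannot be against the current. If it can, I must show that the jump target and the subsequent interval are forced uniquely. This would use \Cref{lem:change_at_cataclysm}: at most one leaf of an upward-branching cataclysm lies in $\lu(s)$. Together with the dichotomy of \Cref{lem:lu_or_rd}, this should force all the other leaves of the cataclysm into $\rd(s)$ and pin down which jumps are compatible. If jumps genuinely give several options, I would fall back on \Cref{lem:marker_sat}, applied to the marker that $s$ follows at $\l$, to exclude all but one.
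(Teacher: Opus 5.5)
There is a genuine gap. Your handling of the upward directions is fine. With $\l\in\lu(s)$, upward saturation rules out rising from $\l$ against the current. A start with degenerate upward orientation (a jump across a cataclysm branching from above) would need $\l\in\rd(s)$. This is essentially the paper's nonbranching case.

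The gap is in the remaining case, which is the whole content of the lemma for branching $\l$, and there your expected outcome is false. Let $\beta$ be a path oriented with the $s$-current that descends along a nondegenerate interval to a leaf $\l'$, jumps across a cataclysm branching from below to $\l$, and then rises, as in \Cref{lem:i_jump}. Then $\l'\in\rd(s)$ and $\l\in\lu(s)$. \Cref{lem:change_at_cataclysm} permits this, since for such a cataclysm it only says that at most one leaf lies in $\rd(s)$. Reversing the part of $\beta$ ending at $\l$ gives a nondegenerate path from $\l$ against the current that begins with the jump to $\l'$. Moreover, the leaves just below $\l$ are also just below $\l'$, so they lie in $\rd(s)$ by \Cref{lem:up_sat}. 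Hence descending from $\l$ is \emph{with} the current, and no path from $\l$ against the current begins by descending.

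Your contingency plan is only announced, and it cites the wrong half of \Cref{lem:change_at_cataclysm}. The statement about $\lu(s)$ concerns cataclysms branching from above, while this first jump crosses one branching from below. \Cref{lem:marker_sat} plays no role. What actually has to be proved is threefold. First, descending from $\l$ and each admissible jump exclude one another. Second, the jump target is unique. Third, this persists through any chain of consecutive jumps: at $\l'$ you again choose between rising nondegenerately and jumping across a from-above cataclysm to a leaf of $\lu(s)$.

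The paper avoids this case analysis. It takes the first nondegenerate intervals $I_1,I_2$ of $\gamma_1,\gamma_2$, assumes they are disjoint, and follows the zigzag path $\alpha$ from a point of $I_1$ to a point of $I_2$. Since $\alpha$ traverses $I_1$ opposite to $\gamma_1$, it begins with the current. By \Cref{lem:change_at_cataclysm}, \Cref{lem:lu_or_rd} and $\lu(s)\cap\rd(s)=\varnothing$, a launching leaf reached with the current forces the landing leaf into the opposite region. Along intervals, being with the current propagates by \Cref{lem:up_sat}. So $\alpha$ is with the current on $I_2$, which it traverses in the same direction as $\gamma_2$, a contradiction.

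An induction over breakpoints using these same three facts could repair your local approach. As written, though, the step that carries the proof fails.
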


\begin{proof}
	First, suppose $\l$ is a nonbranching leaf. Then any two zigzag paths travelling up from $\l$ agree on an open interval, as do any two zigzag paths travelling down from $\l$. So, we just need to show a zigzag path $\gamma$ going up from $\l$ and a zigzag path $\gamma'$ going down from $\l$ can't both be oriented against the $s$-current. If they were, the first part of $\gamma$ would be in $\rd(s)$, so $\l$ would be in $\rd(s)$ by \Cref{lem:up_sat}. Similarly, the first part of $\gamma'$ would be in $\lu(s)$, so $\l\in\lu(s)\cap\rd(s)$.
	
	\begin{figure}[h!]
		\centering
		\includegraphics[width=0.7\linewidth]{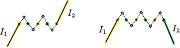}
		\caption{The zigzag path $\alpha$ travels up along $I_1$, jumps across some number of cataclysms, and then travels either up along $I_2$ (as in the left figure) or down along $I_2$ (as in the right figure). In either case, \Cref{lem:change_at_cataclysm} forces the colors of the breakpoints to alternate.}\label{fig:no_sink1}
	\end{figure}

	Now suppose $\l$ is a branching leaf, and let $\gamma_1$ and $\gamma_2$ be two zigzag paths from $\l$ oriented against the $s$-current. Each $\gamma_j$ jumps across some number of cataclysms (possibly zero), before going either up or down on some interval $I_j$. Suppose toward contradiction that $I_1$ and $I_2$ are disjoint. Consider the zigzag path $\alpha$ from a point on $I_1$ to a point on $I_2$. This zigzag path travels along $I_1$, jumps across some number of cataclysms, and then travels along $I_2$, as in \Cref{fig:no_sink1}. Suppose without loss of generality that $\gamma_1$ travels down along $I_1$, which implies that $\alpha$ travels up along $I_1$. Then $I_1\subset \lu(s)$, and by applying \Cref{lem:change_at_cataclysm} to each cataclysm in $\alpha$, we see that the breakpoints in $\alpha$ alternate between being in $\lu(s)$ and being in $\rd(s)$. Thus, we find that $I_2\subset \rd(s)$ if $\alpha$ travels down along $I_2$, and $I_2\subset\lu(s)$ if $\alpha$ travels up along $I_2$. Since the orientation on $I_2$ induced by $\gamma_2$ and $\alpha$ agree, $\gamma_2$ was not actually oriented against the $s$-current.

\end{proof}

\begin{lemma}\label{lem:no_sink_global}
	Let $s$ be a section of $\mc C^\ell$. Let $\gamma$ be a zigzag path in $\mc L^u$ with a nondegenerate initial subpath that is oriented with the $s$-current. Then all of $\gamma$ is oriented with the $s$-current.
\end{lemma}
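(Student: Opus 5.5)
The plan is to propagate the coloring along $\gamma$ one maximal embedded interval at a time, using \Cref{lem:up_sat} inside intervals and \Cref{lem:change_at_cataclysm} across cataclysms. Write the breakpoints of $\gamma$ as $\nu_0,\nu_1,\ldots,\nu_n$. The even-indexed pairs $[\nu_j,\nu_{j+1}]$ are the maximal embedded intervals $I_0,I_2,\ldots$, possibly degenerate. Across each odd-indexed pair, $\gamma$ jumps over a cataclysm. We induct on the number of maximal embedded intervals to show that each one lies in $\lu(s)$ if it is traversed upward and in $\rd(s)$ if it is traversed downward.

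For the base case, consider the first interval $I_0$. By hypothesis a nondegenerate initial piece $[\nu_0,\nu_0']$ of $I_0$ is correctly colored. Suppose $\gamma$ travels up along $I_0$, so that $[\nu_0,\nu_0']\subset\lu(s)$. Then $\nu_0'\in\lu(s)$, and \Cref{lem:up_sat} puts the whole upward saturation of $\nu_0'$ in $\lu(s)$. The part of $I_0$ above $\nu_0'$ lies in that saturation, so all of $I_0$ lies in $\lu(s)$. The downward case is symmetric, using the $\rd$ half of \Cref{lem:up_sat}. The same argument gives the inductive step inside any interval once we know the interval's initial point (or an initial segment) is correctly colored. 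If the degenerate case arises, we just use that the endpoint itself is colored.

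The inductive step across a cataclysm is the main point, and I expect it to need the most care. Suppose $\gamma$ arrives at a launching leaf $\nu_j$ while traveling up along $I_{j-1}$, so $\nu_j\in\lu(s)$. Since $\gamma$ is minimal and ends its upward run at $\nu_j$ before jumping, the leaves $\nu_j$ and $\nu_{j+1}$ are branching from below. (An upward path approaches a cataclysm from below, and the cataclysm is jumped only when the leaves are limits of leaves below.) The zigzag rule then forces $\gamma$ to travel down from the landing leaf $\nu_{j+1}$.

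This step needs two facts. First, $\nu_{j+1}\in\rd(s)$. Second, the downward run starting at $\nu_{j+1}$ is in $\rd(s)$, and this follows from the first fact by \Cref{lem:up_sat}.

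To get $\nu_{j+1}\in\rd(s)$ we first need $\nu_j\notin\rd(s)$, and the hypothesis $\nu_j\in\lu(s)$ does not give this. So I would strengthen the induction: at a launching leaf reached by an upward run, the leaf lies in $\lu(s)$ but not in $\rd(s)$. For a nondegenerate upward run ending at $\nu_j$ this is plausible. If $\nu_j\in\rd(s)$ held, then by \Cref{lem:up_sat} the leaves just below $\nu_j$ would lie in $\rd(s)$, and together with $\lu(s)$ on the run this gives points of $\lu(s)\cap\rd(s)$.

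That intersection need not be empty in general. When it is nonempty, I would argue directly with the leftmost-section structure, using the proof of \Cref{lem:change_at_cataclysm}. If $s$ is leftmost up from $s(\nu_j)$ and rightmost down there, the turning-corners rule forces $s(\nu_{j+1})=\snm(\nu_{j+1})$ or the appropriate marker. We then check that this value is rightmost down from $\nu_{j+1}$, in the same way \Cref{fig:two_views} was used.

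The alternative route is to apply \Cref{lem:lu_or_rd} to $\nu_{j+1}$ and show that $\nu_{j+1}\in\lu(s)\setminus\rd(s)$ contradicts how $s$ behaves over a neighborhood of the cataclysm. The neighborhood is an interval $\tau$ below both leaves, and on it $s$ is constrained from both sides. I expect this to mirror the crossing argument in \Cref{lem:lu_or_rd}: if $s$ followed the leftmost-up continuation at both branches, it would have to cross the markers near $\snm$ over $\tau$.

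Repeating across every cataclysm in $\gamma$, with degenerate intervals handled by the alternation of orientations, finishes the induction.
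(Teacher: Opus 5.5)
Your forward-propagation plan is a legitimate alternative to the paper's argument. The paper argues by contradiction: at a leaf where $\gamma$ switches from with to against the current, reading $\gamma$ backward and forward gives two zigzag paths oriented against the current with no common initial subpath, contradicting \Cref{lem:no_sink_local}. However, your execution of the cataclysm step, which is the only nontrivial step, is wrong.

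You claim that if $\gamma$ arrives travelling up at a launching leaf $\nu_j$, then $\nu_j$ and $\nu_{j+1}$ are branching from below. This is backwards. If the leaves of $I_{j-1}$ just below $\nu_j$ also limited onto $\nu_{j+1}$, they would lie below $\nu_{j+1}$. The minimal path from such a leaf to $\nu_{j+1}$ would then be an embedded upward interval that never visits $\nu_j$, contradicting minimality of $\gamma$. An upward arrival at a launching leaf is followed by a jump across a cataclysm branching from \emph{above}. This matches the proof of \Cref{lem:i_jump}, where $\gamma$ goes down into the launching leaf and up out of the landing leaf across a cataclysm branching from below.

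With your direction, \Cref{lem:change_at_cataclysm} only says that at most one of $\nu_j,\nu_{j+1}$ lies in $\rd(s)$. That can never yield $\nu_{j+1}\in\rd(s)$, even if you knew $\nu_j\notin\rd(s)$, so your strengthened induction hypothesis would not close the step. It is also unjustified: you note yourself that $\lu(s)\cap\rd(s)$ may be nonempty. The two fallback routes (turning corners, a crossing argument over $\tau$) are stated as expectations rather than proved. As written, the induction across cataclysms is not established.

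The repair is short. With the correct branching direction, your hypothesis $\nu_j\in\lu(s)$ and \Cref{lem:change_at_cataclysm} give $\nu_{j+1}\notin\lu(s)$. Then $\nu_{j+1}\in\rd(s)$ by \Cref{lem:lu_or_rd}, and \Cref{lem:up_sat} puts the whole downward run out of $\nu_{j+1}$ in $\rd(s)$. The downward-arrival case is symmetric, and degenerate intervals follow the same alternation. No information about whether $\nu_j\in\rd(s)$ is needed. Repaired this way, your route has an advantage over the paper's. It never invokes \Cref{lem:no_sink_local}, whose hypothesis $\lu(s)\cap\rd(s)=\varnothing$ does not appear in the statement being proved. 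It also avoids having to locate a leaf where $\gamma$ `switches' currents.
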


\begin{proof}
	Suppose otherwise, so there is some zigzag path $\gamma$ with a nondegenerate initial subpath that is oriented with the $s$-current, but such that not all of $\gamma$ is oriented with the $s$-current. Then at some leaf $\l$, $\gamma$ stops being oriented with the $s$-current and starts being oriented against the $s$-current. This contradicts \Cref{lem:no_sink_local}, as travelling from $\l$ along $\gamma$ in either direction gives a zigzag path that is oriented against the $s$-current.
\end{proof}

Putting \Cref{lem:no_sink_local} and \Cref{lem:no_sink_global} together, we can show the following.

\begin{lemma}\label{lem:B_wd}
	For $s$ a section of $\mc C^\ell$, the set $B(s)$ is nonempty.
\end{lemma}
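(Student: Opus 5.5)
If $\lu(s)\cap\rd(s)\neq\varnothing$ there is nothing to prove, since then $B(s)=\lu(s)\cap\rd(s)$ is nonempty by definition. So assume throughout that $\lu(s)\cap\rd(s)=\varnothing$. The plan is to start at an arbitrary leaf and flow against the $s$-current for as long as possible. The place where we stop, whether a leaf or an end, is a local source, and \Cref{lem:no_sink_global} upgrades a local source to a global one.

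The first step is to build the path. Fix $\l_0\in\mc L^u$. By \Cref{lem:lu_or_rd}, $\l_0$ lies in exactly one of $\lu(s)$ or $\rd(s)$. Consider the set $\Gamma$ of all zigzag paths starting at $\l_0$ that are oriented against the $s$-current. I claim $\Gamma$ is totally ordered by the initial-subpath relation. Indeed, take two such paths and let $\l$ be the last leaf at which they agree. Their continuations from $\l$ are two zigzag paths from $\l$ oriented against the current. By \Cref{lem:no_sink_local}, if both were nondegenerate they would share a nondegenerate initial subpath, contradicting the choice of $\l$. So one of them is degenerate, which means one path is an initial subpath of the other. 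Taking the union of all paths in $\Gamma$ therefore gives a single maximal zigzag path or ray $\gamma_{\max}$ from $\l_0$, oriented against the $s$-current. Here I need to check that the against-current condition is closed under increasing unions, and that the union of a nested family of zigzag paths is again a zigzag path or a zigzag ray.

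Now split into two cases according to whether $\gamma_{\max}$ is compact.

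In the first case, $\gamma_{\max}$ is a finite path ending at a leaf $\l$. This case has a subtlety: the union might instead be a half-open path approaching a leaf without reaching it. That is ruled out by closedness. Up- and down-saturation (\Cref{lem:up_sat}) show that the current is constant along intervals except where the color changes, and one must check the supremum leaf belongs to the right region. I would handle this using \Cref{lem:up_sat} directly: if the path travels down into $\lu(s)$, the infimum leaf is below leaves in $\lu(s)$, and so on. With $\l$ in hand, I show $\l\in B(s)$. Let $\gamma$ be any zigzag path starting at $\l$. By maximality, no nondegenerate initial subpath of $\gamma$ is oriented against the current. Each color region restricted to a small initial interval is determined by saturation, and jumps across cataclysms alternate colors by \Cref{lem:change_at_cataclysm}. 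Together these show that a small nondegenerate initial subpath of $\gamma$ is oriented with the current. \Cref{lem:no_sink_global} then gives that all of $\gamma\ssm\{\l\}$ is oriented with the current.

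In the second case, $\gamma_{\max}$ is a ray, and I need it to be properly embedded so that it limits to an end $\vep$. If it accumulated on a leaf $\mu$ of $\mc L^u$, then $\mu$ would be reachable by a finite path from $\l_0$. Using the alternation of colors along any such path, one should be able to extend the path against the current through $\mu$, contradicting maximality. I then show $\vep\in B(s)$. Any zigzag path $\gamma$ starting at $\vep$ eventually merges with the reverse of $\gamma_{\max}$, because zigzag rays to the same end eventually coincide in the simply connected $\mc L^u$. A nondegenerate initial piece of $\gamma$ is therefore a reversed piece of $\gamma_{\max}$, which is oriented with the current. \Cref{lem:no_sink_global} then finishes.

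I expect the main obstacle to be the limiting and properness issues in the construction of $\gamma_{\max}$. Specifically, these are showing that a bounded union of nested against-current paths actually reaches its endpoint leaf and stays against the current there, and that an infinite one is proper. Both should follow from the saturation lemmas and from the alternation of colors at breakpoints, but the non-Hausdorff topology makes the closedness checks delicate.
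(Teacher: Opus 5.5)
Your construction is the paper's: fix a leaf, use \Cref{lem:no_sink_local} to develop a unique maximal zigzag path or ray oriented against the $s$-current, and show via \Cref{lem:no_sink_global} that the place where it stops lies in $B(s)$. The step that fails is the one you flag as the main obstacle. You claim that a half-open maximal path is ``ruled out by closedness'', and in the ray case that one can always ``extend the path against the current through $\mu$''. But \Cref{lem:up_sat} only propagates membership in $\lu(s)$ upward and in $\rd(s)$ downward. So knowing that a descending against-current run lies in $\lu(s)$ says nothing about its infimum, and neither region is known to be closed. Nothing available at this stage excludes a leaf $c\in\rd(s)\ssm\lu(s)$ all of whose leaves just above lie in $\lu(s)$. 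This is precisely the configuration the proof of \Cref{lem:no_i} starts from, and ruling it out there takes pinching and finiteness of master sets, not saturation. An against-current path descending through $\lu(s)$ toward such a $c$ can never contain $c$. The maximal path is then genuinely half-open and fits neither of your two cases.

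The repair is not to exclude this case but to use it. This is what the paper does when it lets $\gamma$ terminate at a ``(potentially nonunique)'' point $\mu$ and argues from maximality. Take $\mu$ to be a limit leaf of the half-open end and check $\mu\in B(s)$ directly. The definition of $B(s)$ only asks that $\gamma\ssm\{\mu\}$ be oriented with the current, so the wrong color at $\mu$ itself is harmless. In the descending case (the ascending one is symmetric), check each way of leaving $\mu$:
\begin{itemize}
\item going back up the approach germ stays in $\lu(s)$;
\item going down stays in $\rd(s)$ by \Cref{lem:up_sat};
\item a jump across a cataclysm branching from below lands in $\lu(s)$ by \Cref{lem:change_at_cataclysm} and \Cref{lem:lu_or_rd};
\item an against-current continuation through another leaf $\mu''$ of a cataclysm branching from above would extend $\gamma_{\max}$, because leaves approaching $\mu$ from above also approach $\mu''$, contradicting maximality.
\end{itemize}
After this, \Cref{lem:no_sink_global} finishes exactly as you planned. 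The same argument covers the situation in your second case where the path accumulates on a leaf instead of escaping to an end.
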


\begin{proof}

	If $\lu(s)\cap\rd(s)\neq\varnothing$, this is clear, so suppose otherwise. The idea is to start at any leaf $\l$ and flow against the $s$-current until we either reach a global source for the $s$-current in $\mc L^u$, or escape out an end of $\mc L^u$. 
	
	So, let $\l\in\mc L^u$, and suppose no zigzag path starting at $\l$ is oriented against the $s$-current. Then we claim $\l\in B(s)$. To see this, let $\gamma$ be a zigzag path from $\l$. By assumption, some initial subpath of $\gamma$ is oriented with the $s$-current. But then by \Cref{lem:no_sink_global}, all of $\gamma$ is oriented with the $s$-current.
	
	So, suppose some zigzag path starting at $\l$ is oriented against the $s$-current. By \Cref{lem:no_sink_local}, initial segments of any two such zigzag paths agree, so we can uniquely develop out a maximal zigzag path or ray $\gamma$ from $\l$ that is oriented against the $s$-current. Either $\gamma$ terminates at some (potentially nonunique) point $\mu\in\mc L^u$, or it escapes out an end $\vep$ of $\mc L^u$.
	
	In the first case, we claim $\mu\in B(s)$. This is because the zigzag path $\gamma$ is assumed to be maximal, so no zigzag path out of $\mu$ is oriented against the $s$-current. Then by \Cref{lem:no_sink_global} again, $\mu\in B(s)$.
	
	In the second case, we claim $\vep\in B(s)$. To see this, let $\alpha$ be a zigzag ray from $\vep$, so $\overline{\alpha}$ is a zigzag ray limiting to $\vep$. Then some terminal portion of $\overline{\alpha}$ agrees with a terminal portion of $\gamma$. Since $\gamma$ is oriented against the $s$-current, this part of $\overline{\alpha}$ is also oriented against the $s$-current. Thus, an initial portion of $\alpha$ is oriented with the $s$-current, so all of $\alpha$ is oriented with the $s$-current by \Cref{lem:no_sink_global}.
\end{proof}

			\AddToShipoutPictureBG*{
		\AtPageLowerLeft{\includegraphics[width=\paperwidth,height=\paperheight]{template.pdf}}
	}

Now that we know the base of a section exists, we turn to proving some properties of $B(s)$, and of sections with a given base. This will allow us to classify sections into three types -- special sections, limit sections based at an end, and limit sections based at a leaf. At the end of this section, we show that sections of the third type do not exist.

\begin{lemma}\label{lem:i_jump}
	
	Let $s$ be a section of $\mc C^\ell$, and let $\gamma$ be a zigzag path oriented with the $s$-current. Then 
	\[
	s(\nu)=\snm(\nu)
	\]
	for all landing leaves $\nu$ in $\gamma$.
\end{lemma}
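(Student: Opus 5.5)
Fix a landing leaf $\nu$ of $\gamma$, with launching leaf $\nu'$, so $\nu'$ and $\nu$ are nonseparated. By symmetry (reversing orientations swaps $\lu$ and $\rd$, leftmost and rightmost), we treat the case where $\gamma$ travels \emph{up} into $\nu'$ and then \emph{down} out of $\nu$. Then $\nu'$ and $\nu$ are branching from above, the final interval into $\nu'$ lies in $\lu(s)$, and the initial interval out of $\nu$ lies in $\rd(s)$. Degenerate intervals at consecutive breakpoints inside one cataclysm are handled the same way, using the orientation conventions of \Cref{def:current}. By \Cref{lem:up_sat} we get $\nu'\in\lu(s)$, and since the downward interval from $\nu$ is in $\rd(s)$ we get $\nu\in\rd(s)$. \Cref{lem:change_at_cataclysm} then forces $\nu\notin\lu(s)$.

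Now suppose toward contradiction that $s(\nu)\neq\snm(\nu)$. Then $s(\nu)$ lies on some marker $m$, since every point of $\partial_\infty\nu$ other than $\snm(\nu)$ is a marker point. Let $\tau$ be an interval in $\mc L^u$ above $\nu'$ and $\nu$ limiting onto both. As in the proof of \Cref{lem:change_at_cataclysm}, $E_\infty\ssm\snm$ is Hausdorff via $\Phi$. The points $s(\tau_j)$ for $\tau_j\in\tau$ approaching the cataclysm therefore converge in $E_\infty\ssm\snm$ to at most one point, so $s$ is at a nonmarker point on all but at most one of the limiting leaves. Since $s(\nu)=\lim s(\tau_j)$ is a marker point, $s(\tau_j)$ lies on the marker $m$ for $\tau_j$ near the cataclysm. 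Continuity of $s$ along $\tau$ then gives $s(\nu')=\snm(\nu')$.

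We now use that $s$ is leftmost up from $\nu'$. Because $s(\nu')$ is the nonmarker point, the leftmost up section from $\snm(\nu')$ must leave along the marker $\Phi(\ell^s)$, where $\ell^s$ makes the perfect fit with $n'$ in the appropriate upper quadrant, exactly as in \Cref{fig:two_views}. That marker enters the region over $\tau$ on the side corresponding to $n'$. However, $m$ passes through $\partial_\infty\nu$, and near the cataclysm the markers through $\nu$ and the markers leaving $\snm(\nu')$ are distinct: $\Phi(\ell^s)$ accumulates on $\snm(\nu)$, not on a marker point of $\nu$. So $s$ would have to follow two different markers over $\tau$ near the cataclysm, which is a contradiction. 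Hence $s(\nu)=\snm(\nu)$.

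The main obstacle is the last paragraph: carefully identifying the markers leaving $\snm(\nu')$ leftmost up, and confirming via the stitching map picture in $\orb$ that they cannot coincide with a marker through a non-nonmarker point of $\nu$. I would argue this directly in $\orb$. The perfect-fit stable leaf $\ell^s$ of $n'$ is separated from $n$ (it makes a perfect fit with $n$ as well, or lies between them in the branching chain), so it cannot intersect $n$. Therefore $\Phi(\ell^s)$ does not meet $\partial_\infty\nu$ at a marker point.
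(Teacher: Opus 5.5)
There is a genuine gap in the step you flagged yourself. Everything before it is sound: the reduction to branching from above, $\nu'\in\lu(s)$ and $\nu\notin\lu(s)$, and the Hausdorff argument giving $s(\nu')=\snm(\nu')$ with $s$ running along $m$ over the tail of $\tau$.

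\textbf{Where the argument breaks.} Your symmetry only trades branching from above for branching from below. It does not remove the left/right asymmetry inside the cataclysm, and ``leftmost up'' is not left/right symmetric.

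You claim the leftmost up section from $\snm(\nu')$ leaves along $\Phi(\ell^s)$, where $\ell^s$ is the chain stable leaf on the side of $n'$ facing $n$. This is correct only when $\nu'$ lies to the right of $\nu$, so that $\ell^s$ is in the upper left quadrant of $n'$.

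When $\nu'$ lies to the left of $\nu$, that stable leaf sits in the upper right quadrant of $n'$. Following it is \emph{not} leftmost up from $\snm(\nu')$, because $\snm$ itself is an admissible way up from $\snm(\nu')$ and lies to its left. Compare the first case in the proof of \Cref{lem:change_at_cataclysm} when $\ell^s=m^s$. So in this sub-case your contradiction (``$s$ would follow two different markers'') is not available.

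The conclusion still holds there, but for a different reason. Near $\snm(\nu')$, every marker through a marker point of $\partial_\infty\nu$ lies, over $\tau$ near the cataclysm, strictly on the right (chain) side of $\snm$. A leftmost up section stays weakly left of $\snm$, so it cannot follow such a marker. This is the second case in the proof of \Cref{lem:change_at_cataclysm}.

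\textbf{A simpler fix using what you already have.} You can bypass the local marker analysis entirely:
\begin{enumerate}
\item By \Cref{lem:up_sat}, $\tau\subset\lu(s)$.
\item You have shown that $s$ lies on $m$ over leaves of $\tau$ near the cataclysm.
\item So \Cref{lem:marker_sat} puts every leaf met by $m$, in particular $\nu$, into $\lu(s)$.
\item This contradicts $\nu\notin\lu(s)$, which you derived from \Cref{lem:change_at_cataclysm} but never used.
\end{enumerate}
This is the paper's proof, written there in the mirror-image configuration (branching from below, with $\rd$ in place of $\lu$). The left/right geometry you were redoing by hand is already packaged in \Cref{lem:change_at_cataclysm}.
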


\begin{proof}

	Let $\nu$ be a landing leaf in $\gamma$, and let $\mu$ be the corresponding launching leaf. Suppose without loss of generality that $\gamma$ goes up from $\nu$, so $\nu$ and $\mu$ are nonseparated leaves that are branching from below. Let $\tau\subset\mc L$ be an open interval limiting from below onto the cataclysm containing $\mu$ and $\nu$, as in \Cref{fig:i_jump}.
	
	\begin{figure}[h!]
		\centering
		\includegraphics[width=0.25\linewidth]{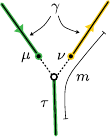}
		\caption{The setup in the proof of \Cref{lem:i_jump}. The arrows indicate the orientation of $\gamma$. The portion of the leaf space that $m$ lives over is indicated.}\label{fig:i_jump}
	\end{figure}

	Since $\gamma$ is oriented with the $s$-current, we have $\mu\in\rd(s)$, $\tau\subset\rd(s)$, and $\nu\in\lu(s)$. Suppose that $s(\nu)$ is on a marker $m$. Then $m$ intersects $E_\infty\mid_\tau$, so the portion of the leaf space that $m$ is over is contained in $\rd(s)$ by \Cref{lem:marker_sat}. However, this includes $\nu$, which contradicts \Cref{lem:change_at_cataclysm}.

\end{proof}

\begin{lemma}\label{lem:type_s}
	For $s$ a section of $\mc C^\ell$, $s$ is special if and only if $\lu(s)\cap\rd(s)\neq\varnothing$. For $s$ special and $\lambda\in B(s)$, we have  $s=s^\ell_{s(\lambda)}$.
\end{lemma}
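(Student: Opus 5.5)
The proof has three steps: a forward direction, the key identity $s=s^\ell_{s(\l)}$ for $\l\in\lu(s)\cap\rd(s)$, and the converse, which follows from that identity.

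\emph{Forward direction.} Let $s=s^\ell_p$ with $p\in\partial_\infty\l$. By definition, $s^\ell_p$ travels leftmost up and rightmost down from $p$ over the leaves comparable to $\l$. Hence $s(\mu)=s^\ell_{s(\l)}(\mu)$ for every $\mu\geq\l$ and every $\mu\leq\l$, so $\l\in\lu(s)\cap\rd(s)$.

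\emph{Key identity.} Fix $\l\in\lu(s)\cap\rd(s)$ and set $s'=s^\ell_{s(\l)}$. Then $s$ and $s'$ agree on the up- and down-saturations of $\l$, and we must show they agree on every leaf $\nu\in\mc L^u$. Join $\l$ to $\nu$ by the zigzag path $\gamma$ and induct on the breakpoints of $\gamma$.

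\emph{Current.} First I claim $\gamma$ is oriented with the $s$-current. The initial interval of $\gamma$ lies in $\lu(s)$ if it goes up and in $\rd(s)$ if it goes down, by \Cref{lem:up_sat} applied at $\l$. If that interval is nondegenerate, \Cref{lem:no_sink_global} gives the whole of $\gamma$. If $\gamma$ starts by jumping a cataclysm, I instead argue directly with \Cref{lem:change_at_cataclysm} and \Cref{lem:lu_or_rd}. Say $\l$ and the landing leaf $\nu_1$ branch from below, so that $\gamma$ then goes up from $\nu_1$. Since $\l\in\rd(s)$, \Cref{lem:change_at_cataclysm} forces $\nu_1\notin\rd(s)$, hence $\nu_1\in\lu(s)$ by \Cref{lem:lu_or_rd}. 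The path then continues correctly colored, and \Cref{lem:no_sink_global} applies from there.

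\emph{Agreement on each piece.} By \Cref{lem:i_jump}, $s(\nu_j)=\snm(\nu_j)$ at every landing leaf $\nu_j$. The turning corners rule says $s'$ also lands on the nonmarker point $\snm(\nu_j)$ after each jump and then proceeds leftmost up and rightmost down. Consider the maximal embedded interval of $\gamma$ starting at $\nu_j$. On it, $s$ is leftmost up from $\snm(\nu_j)$ if the interval goes up, since $\nu_j\in\lu(s)$, and rightmost down if it goes down. So $s$ agrees with $s^\ell_{\snm(\nu_j)}$ there, which is exactly what $s'$ does. Induction along $\gamma$ gives $s(\nu)=s'(\nu)$.

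\emph{Converse.} Given the identity, if $\lu(s)\cap\rd(s)\neq\varnothing$ then $s=s^\ell_{s(\l)}$ is special. For $s$ special, $B(s)=\lu(s)\cap\rd(s)$, so the identity is exactly the second sentence of the statement.

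\emph{Main obstacle.} The hard step is justifying that $s'$ behaves as described across jumps, i.e.\ making the turning corners rule precise enough to say $s'(\nu_j)=\snm(\nu_j)$ with $s'$ leftmost up/rightmost down afterward. Since $s'$ is defined as a limit of marker-respecting approximants (\Cref{fig:special_section}), I would first try the following: note that $s'$ is itself a section of $\mc C^\ell$ with $\l\in\lu(s')\cap\rd(s')$. The argument above then applies to $s'$ as well, giving landing on $\snm$ via \Cref{lem:i_jump} and the same coloring along $\gamma$. So both $s$ and $s'$ are determined inductively along $\gamma$ by the same data and must coincide. This symmetric approach avoids unpacking the construction of $s^\ell_p$ directly.
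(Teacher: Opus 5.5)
Your proof is correct and follows essentially the same route as the paper. The forward direction is by construction. You then propagate the coloring by $\lu$ and $\rd$ along the zigzag path for both $s$ and $s^\ell_{s(\l)}$ via \Cref{lem:change_at_cataclysm}, and use \Cref{lem:i_jump} to pin both sections to $\snm$ at landing leaves; your symmetric treatment of $s^\ell_{s(\l)}$ is what the paper's terse ``$\lu(s)$ and $\lu(s^\ell_{s(\l)})$ agree along $\gamma$'' is doing.

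One small caution: the paper proves \Cref{lem:no_sink_global} via \Cref{lem:no_sink_local}, which assumes $\lu(s)\cap\rd(s)=\varnothing$, the opposite of your situation. So rather than citing it, propagate the coloring directly with \Cref{lem:up_sat}, \Cref{lem:change_at_cataclysm} and \Cref{lem:lu_or_rd}, as you already do at the first jump.
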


\begin{proof}
	One direction is straightforward; for $s$ a special section based on $\partial_\infty\l$, we have $\l\in\lu(s)\cap\rd(s)$ by construction. 
	
	Now suppose $s$ is a section of $\mc C^\ell$ with $\l\in\lu(s)\cap\rd(s)$. Then for all leaves $\mu$ comparable to $\l$ we have $s(\mu)=s^\ell_{s(\l)}$. For $\mu$ an arbitrary leaf of $\mc L^u$, let $\gamma$ be the zigzag path from $\l$ to $\mu$. By \Cref{lem:change_at_cataclysm}, we have that the sets $\lu(s)$ and $\lu(s^\ell_{s(\l)})$ agree along $\gamma$, as do $\rd(s)$ and $\rd(s^\ell_{s(\l)})$. Furthermore, each time $\gamma$ jumps across a cataclysm, we must have that $s$ is on the nonmarker section at the landing point of each jump by \Cref{lem:i_jump}.
\end{proof}

\begin{lemma}\label{lem:type_ie}
	Let $s$ be a limit section of $\mc C^\ell$. Then $B(s)$ is either contained in the preimage of a single point under $\mc H\colon\mc L^u\to\mc H(\mc L^u)$ or is a single point in $\Ends(\mc L^u)$.
\end{lemma}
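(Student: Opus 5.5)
Since $s$ is a limit section, \Cref{lem:type_s} gives $\lu(s)\cap\rd(s)=\varnothing$. So $B(s)$ is the set of points of $\mc L^u\cup\Ends(\mc L^u)$ from which every zigzag path (minus its starting point) is oriented with the $s$-current. The plan is to take two distinct points $x,y\in B(s)$ and show that they must be leaves in the same fiber of $\mc H$, i.e.\ a single leaf or two nonseparated leaves. Granting this, two cases remain. If some point of $B(s)$ is an end, then $B(s)$ is that single end. Otherwise $B(s)\subseteq\mc L^u$ and all its elements are pairwise nonseparated, hence lie in one cataclysm, i.e.\ in one $\mc H$-fiber.

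The main step is to take $x\neq y$ in $B(s)$ and let $\gamma$ be the zigzag path (or zigzag ray, or zigzag line if both are ends) from $x$ to $y$. Since $x\in B(s)$, $\gamma\ssm\{x\}$ is oriented with the $s$-current. Since $y\in B(s)$, the reversed path $\overline\gamma\ssm\{y\}$ is also oriented with the $s$-current, so $\gamma\ssm\{y\}$ is oriented against it. Suppose $\gamma$ contains a nondegenerate maximal embedded interval $I$; then $I$ has interior leaves distinct from $x$ and $y$. If $\gamma$ travels up along $I$, the interior of $I$ lies in $\lu(s)$ because $\gamma$ is oriented with the current, and in $\rd(s)$ because it is oriented against it. This contradicts $\lu(s)\cap\rd(s)=\varnothing$. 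The downward case is symmetric. Hence every interval of $\gamma$ is degenerate. In particular neither $x$ nor $y$ is an end, since a ray or line to an end contains nondegenerate intervals. So $\gamma$ consists only of jumps across cataclysms.

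It then remains to see that a zigzag path made only of degenerate intervals stays inside one cataclysm. Consecutive breakpoints $\nu_j,\nu_{j+1}$ with $j$ odd are nonseparated, and for $j$ even the interval from $\nu_j$ to $\nu_{j+1}$ is degenerate, so $\nu_j=\nu_{j+1}$. Thus $x$ and $y$ are linked by a chain of nonseparation relations. Nonseparation in $\mc L^u$ is not transitive in general, since one leaf can lie in a cataclysm branching from above and in another branching from below. The chain still maps to a single point under $\mc H$, however, because $\mc H$ identifies nonseparated points and its fibers are the classes of the equivalence relation generated by nonseparation. This yields exactly the statement that $B(s)$ lies in one fiber of $\mc H$, which is why the lemma is phrased via $\mc H$ rather than cataclysms. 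If a sharper statement were wanted, one could apply \Cref{lem:change_at_cataclysm} at each jump to restrict how the chain alternates, but that is not needed here.

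The main obstacle is making the degenerate-interval orientation conventions rigorous. The definition of the $s$-current uses the inherited up/down label even on degenerate intervals, and the contradiction above needs an interval with an interior leaf. I would therefore argue only with nondegenerate intervals, which avoids the issue. I would also check the case where $x$ or $y$ is an end more carefully: a properly embedded zigzag ray to an end must contain nondegenerate intervals, since otherwise it remains in a single $\mc H$-fiber and cannot be proper. The interior leaves of one such interval then give the contradiction.
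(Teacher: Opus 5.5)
Your argument is correct and matches the paper's proof: a zigzag path between two points of $B(s)$ containing a nondegenerate interval $I$ would put the interior of $I$ in both $\lu(s)$ and $\rd(s)$, contradicting \Cref{lem:type_s}. You run it directly rather than by contradiction, and you spell out the ends case and the passage to a single $\mc H$-fiber (via chains of nonseparation) more carefully than the paper does. One small fix: drop the claim in your first paragraph that the elements of $B(s)$ are pairwise nonseparated and lie in one cataclysm, since what you actually prove, and what the lemma needs, is the chain-of-nonseparation statement from your third paragraph.
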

	\AddToShipoutPictureBG*{
	\AtPageLowerLeft{\includegraphics[width=\paperwidth,height=\paperheight]{template.pdf}}
}

\begin{proof}
Suppose otherwise. Then $B(s)$ contains two points $x,y\in\mc L^u\cup\Ends(\mc L^u)$ such that the zigzag path $\gamma$ from $x$ to $y$ contains a nondegenerate embedded interval $I$ (i.e. does not just jump across several cataclysms). Suppose without loss of generality that $\gamma$ travels up along $I$. Then since $x\in B(s)$ and $\gamma$ is a zigzag path from $x$, we should have $I\subset\lu(s)$. However, as $\overline{\gamma}$ is a zigzag path from $y$ to $x$ and $y\in B(s)$, we should have $I\subset\rd(s)$. This contradicts the fact that $s$ is a limit section by \Cref{lem:type_s}.
\end{proof}

\begin{definition}\label{def:types}
	Let $s$ be a section of $\mc C^\ell$. If $s$ is special, we say $s$ is \defn{type (S)} (for `special'). If $s$ is a limit section and $B(s)\subset\mc L^u$, we say $s$ is \defn{type (I)} (for `interior'). If $s$ is a limit section and $B(s)\in\Ends(\mc L^u)$, we say $s$ is \defn{type (E)} (for `end').
\end{definition}

\begin{remark}\label{rem:type_i}

We will shortly prove that in our setting, type (I) sections do not exist. First, we need to develop a bit more structure.
\end{remark}

\begin{lemma}\label{lem:type_s_in_line}
	If $s$ is type (S), then $B(s)$ is connected and contained in an embedded line.
\end{lemma}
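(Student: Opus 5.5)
If $s$ is type (S), then by \Cref{def:B} we have $B(s)=\lu(s)\cap\rd(s)$. The plan is to show two things: that this set contains no two incomparable leaves, and that it is order-convex along comparable leaves. Together these give that $B(s)$ is a connected subset of a single embedded line.

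\emph{Convexity.} Let $\l<\mu$ both lie in $B(s)$, and take $\nu\in[\l,\mu]$. Since $\l\in\lu(s)$, \Cref{lem:up_sat} gives $\nu\in\lu(s)$. Since $\mu\in\rd(s)$, the same lemma gives $\nu\in\rd(s)$. Hence $[\l,\mu]\subset B(s)$.

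\emph{Pairwise comparability.} Suppose $\l,\mu\in B(s)$ are incomparable. Let $\gamma$ be the zigzag path from $\l$ to $\mu$; it has at least one cataclysm jump. We argue as in the proof of \Cref{lem:type_ie}, but now using cataclysms. Because $\l\in B(s)$ and $s=s^\ell_{s(\l)}$ by \Cref{lem:type_s}, the zigzag path $\gamma\ssm\{\l\}$ is oriented with the $s$-current. This holds because a special section is leftmost up and rightmost down along zigzag paths from its base, by the turning corners rule together with \Cref{lem:change_at_cataclysm}; for $\gamma\ssm\{\l\}$ one checks it exactly as in the end of the proof of \Cref{lem:type_s}.

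By symmetry, the reversed path $\overline{\gamma}\ssm\{\mu\}$ is also oriented with the $s$-current. Now pick a cataclysm jump in $\gamma$ from a launching leaf $\nu_j$ to a landing leaf $\nu_{j+1}$. Suppose, without loss of generality, that these leaves are branching from below. Then in the forward direction $\gamma$ goes down into $\nu_j$ and up out of $\nu_{j+1}$, so $\nu_{j+1}\in\lu(s)$. In the reversed direction, $\overline\gamma$ travels up out of $\nu_j$, so $\nu_j\in\lu(s)$.

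Neither conclusion alone contradicts \Cref{lem:change_at_cataclysm}, which only restricts $\rd$ for branching from below. So I will use \Cref{lem:i_jump} instead. Applied to $\gamma$, it gives $s(\nu_{j+1})=\snm(\nu_{j+1})$. Applied to $\overline\gamma$, in which $\nu_j$ is the landing leaf, it gives $s(\nu_j)=\snm(\nu_j)$. The intervals just below the cataclysm lie in $\rd(s)$: from $\gamma$ these come from the downward segment into $\nu_j$, and from $\overline\gamma$ from the downward segment into $\nu_{j+1}$.

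Next take a marker through the lower-left or lower-right quadrant at $\nu_{j+1}$, coming from the stable leaf making a perfect fit. Since $s$ is on $\snm$ at both nonseparated leaves, $s$ must follow this perfect-fit stable leaf marker going down from one of them. This marker then lands over the leaves below both branches, and by \Cref{lem:marker_sat} it forces incompatible colorings. Concretely, I expect to mimic the proof of \Cref{lem:change_at_cataclysm} with the roles of leftmost up and rightmost down swapped, using \Cref{fig:two_views}: rightmost down from $\snm(\nu_j)$ and rightmost down from $\snm(\nu_{j+1})$ cannot both be realized by a single admissible section over the common interval $\tau$ below. The reason is that the rightmost down directions from $\snm(\nu_j)$ and $\snm(\nu_{j+1})$ follow the two distinct perfect-fit markers, which separate from each other in $E_\infty\mid_\tau$.

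This step is the main obstacle. It requires care with the quadrant geometry and with the degenerate case where the perfect-fit stable leaves coincide. In that case I would instead use that $s$ would have to coincide with that single marker over $\tau$, and then derive the contradiction from the fact that $s$ is on $\snm$ at both branching leaves.

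Once every pair of points of $B(s)$ is comparable and $B(s)$ is convex, $B(s)$ is a connected, totally ordered subset of the simply connected 1-manifold $\mc L^u$. It is therefore an interval, and in particular it is contained in an embedded line.
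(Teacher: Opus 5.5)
\textbf{There is a genuine gap: the key contradiction is never derived.} Your convexity step is correct and is exactly how the paper gets connectedness. Your setup for comparability is also the right one: both $\gamma\ssm\{\l\}$ and $\overline\gamma\ssm\{\mu\}$ are oriented with the $s$-current. The problem is what comes after. The marker argument you call ``the main obstacle'' is only sketched, and its key claims are asserted without proof:
that $s$ must follow a perfect-fit marker down from one of $\nu_j,\nu_{j+1}$; that the rightmost-down markers from $\snm(\nu_j)$ and $\snm(\nu_{j+1})$ separate over $\tau$; and how to handle the case where they coincide. In effect you are trying to re-derive the second clause of \Cref{lem:change_at_cataclysm} from the geometry of $E_\infty$, and you do not finish.

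\textbf{The fix uses information you already have.} You kept only the $\lu$ conclusions and dropped the $\rd$ ones. Orientation with the current also constrains downward intervals, including their endpoints.
In $\gamma$, the (possibly degenerate) maximal interval ending at $\nu_j$ is oriented downward, so $\nu_j\in\rd(s)$; if $\nu_j=\l$, this holds because $\l\in B(s)\subset\rd(s)$.
In $\overline\gamma$, the maximal interval ending at $\nu_{j+1}$ is oriented downward, so $\nu_{j+1}\in\rd(s)$; if $\nu_{j+1}=\mu$, use $\mu\in B(s)$.
Your remark about ``the downward segment into $\nu_j$'' was one step from this. That segment lies above $\nu_j$, and what it gives is $\nu_j\in\rd(s)$ itself, not information about the region below the cataclysm. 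Since $\nu_j$ and $\nu_{j+1}$ branch from below, \Cref{lem:change_at_cataclysm} is contradicted immediately. You need no markers, no \Cref{lem:i_jump}, and no \Cref{lem:type_s}. This is essentially the paper's argument, run at the last cataclysm of $\gamma$.

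\textbf{A cleaner justification of the current claim.} You justify ``$\gamma\ssm\{\l\}$ is oriented with the $s$-current'' by the turning corners rule; it is cleaner to use an alternation argument. If $\gamma$ reaches a launching leaf going up, that leaf lies in $\lu(s)$ and the cataclysm branches from above. \Cref{lem:change_at_cataclysm} then keeps the landing leaf out of $\lu(s)$, so \Cref{lem:lu_or_rd} puts it in $\rd(s)$. \Cref{lem:up_sat} then carries this down the next interval. The case where $\gamma$ arrives going down is symmetric.
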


\begin{proof}
	Suppose $s$ is type (S), and let $\l\in B(s)$. Suppose $\mu$ is a leaf incomparable with $\l$, and let $\gamma$ be the zigzag path from $\l$ to $\mu$. By applying \Cref{lem:change_at_cataclysm} as we travel across $\gamma$, we find that $\mu\in\lu(s)$ if $\gamma$ was travelling down before the last cataclysm, and $\mu\in\rd(s)$ in the other case. If $\mu$ were in $B(s)$, we would get a contradiction with \Cref{lem:change_at_cataclysm} as we travelled the other way along $\gamma$ from $\mu$ to $\l$. Thus, any two leaves in $B(s)$ are comparable, so $B(s)$ is contained in an embedded line.
	
	The fact that $B(s)$ is connected then follows directly from \Cref{lem:up_sat}.
\end{proof}

\begin{lemma}\label{lem:type_i_in_snm}
	If $s$ is type (I), then $s(\l)=\snm(\l)$ for all $\l\in B(s)$.
\end{lemma}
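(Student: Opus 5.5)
We argue by contradiction: suppose $s$ is type (I), $\l\in B(s)$, and $s(\l)$ is a marker point, lying on a marker $m$. Since $s$ is a limit section, \Cref{lem:type_s} gives $\lu(s)\cap\rd(s)=\varnothing$. So $B(s)$ is defined by the zigzag condition: every zigzag path from $\l$, with $\l$ itself removed, is oriented with the $s$-current. The marker $m$ determines an embedded open interval in $\mc L^u$ containing $\l$. It contains leaves slightly above $\l$ and leaves slightly below $\l$. The plan is to show that both the upward and downward germs at $\l$ lie in one of $\lu(s)$ or $\rd(s)$. This forces a contradiction with the current condition.

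First, take short paths along $m$. The zigzag path going up from $\l$ along $m$ is oriented with the $s$-current, so leaves just above $\l$ on $m$ lie in $\lu(s)$. Similarly, leaves just below $\l$ on $m$ lie in $\rd(s)$. Now pick a leaf $\nu$ on $m$ strictly above $\l$, so $\nu\in\lu(s)$. The section $s$ agrees with $m$ over a neighborhood of $\l$, so $s$ is on the marker $m$ at $\nu$.

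Next, apply \Cref{lem:marker_sat} at $\nu$. Every leaf meeting $m$, including $\l$ and the leaves below $\l$ on $m$, then lies in $\lu(s)$. But those lower leaves also lie in $\rd(s)$. This contradicts $\lu(s)\cap\rd(s)=\varnothing$, so $s(\l)=\snm(\l)$.

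The delicate step is justifying that $s$ stays on $m$ over a neighborhood of $\l$. This is needed so that \Cref{lem:marker_sat} applies at $\nu$, which requires $s(\nu)\in m$. I expect this to be the main obstacle. Admissible sections do not cross markers, but they could a priori leave $m$ immediately, for instance by running along the nonmarker section. To handle this, I would use the local picture of $E_\infty\ssm\snm$ as a plane via $\Phi$. Near a marker point, $s$ moving leftmost up must stay on the stable leaf $\Phi^{-1}(m)$ until it reaches $\snm$. This is because it cannot cross markers, and markers through nearby points foliate a neighborhood, as in the reasoning in \Cref{ex:base_s}.

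If this still leaves a gap, there is a fallback that avoids it. Apply \Cref{lem:marker_sat} directly at $\l$ itself. For this, it suffices to know that $\l\in\lu(s)$ or $\l\in\rd(s)$, which holds by \Cref{lem:lu_or_rd}. Suppose $\l\in\lu(s)$. Then the whole interval of $m$ is in $\lu(s)$. But leaves just below $\l$ on $m$ are in $\rd(s)$, which again contradicts $\lu(s)\cap\rd(s)=\varnothing$. The case $\l\in\rd(s)$ is symmetric, using leaves just above $\l$. I would present this fallback as the main argument, since it needs only that $s(\l)$ lies on $m$.
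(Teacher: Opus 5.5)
Your proposal is correct and is essentially the paper's argument: the base condition puts the leaves of $m$ just above $\l$ in $\lu(s)$ and those just below in $\rd(s)$, and \Cref{lem:marker_sat} spreads one of these along the whole marker, contradicting $\lu(s)\cap\rd(s)=\varnothing$. The paper spreads both colors along $m$, while you spread one and use the base condition directly for the other; your fallback of applying \Cref{lem:marker_sat} at $\l$ itself via \Cref{lem:lu_or_rd} is an equally valid variant. The step you flagged is harmless, since markers foliate $E_\infty\ssm\snm$ via $\Phi$, so an admissible section through a marker point locally follows that marker, which the paper also uses implicitly.
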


\begin{proof}

	Suppose $s$ evaluated to a marker point on $\l$. Then the portion of the marker above $s(\l)$ would be in $\lu(s)$, so the entire marker would be in $\lu(s)$ by \Cref{lem:marker_sat}. Similarly, the entire marker would be in $\rd(s)$, so $s$ would be type (S), not type (I).
\end{proof}

We are now ready to show that only type (S) and type (E) sections exist.

\begin{lemma}\label{lem:no_i}
	Every section of $\mc C^\ell$ is type (S) or type (E).
\end{lemma}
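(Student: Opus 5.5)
The plan is to argue by contradiction. Suppose $s$ is a type (I) section. Then $s$ is a limit section, $\lu(s)\cap\rd(s)=\varnothing$, and $B(s)\subset\mc L^u$ is contained in a single fiber of $\mc H$, so it is a set of pairwise nonseparated leaves inside one cataclysm. By \Cref{lem:type_i_in_snm}, $s(\l)=\snm(\l)$ for every $\l\in B(s)$. Fix such a $\l$. Every zigzag path out of $\l$ is oriented with the $s$-current. In particular, the interval going up from $\l$ lies in $\lu(s)$, and the interval going down lies in $\rd(s)$.

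The key step is to show that $s$ is then leftmost up and rightmost down from $p=\snm(\l)$ itself. That would put $\l\in\lu(s)\cap\rd(s)$, contradicting \Cref{lem:type_s}.

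Consider first the upward direction. Points $\mu>\l$ close to $\l$ satisfy $\mu\in\lu(s)$, so $s\mid_{[\mu,\infty)}=s^\ell_{s(\mu)}$ along comparable leaves above $\mu$. I would take a sequence $\mu_j\downarrow\l$ along an embedded interval above $\l$. By continuity of $s$ we get $s(\mu_j)\to s(\l)=\snm(\l)$. I then want to show that the leftmost-up continuations $s^\ell_{s(\mu_j)}$ restricted above $\mu_j$ converge to the leftmost-up continuation from $\snm(\l)$.

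To do this, pass to the orbit space with $\Phi$ and look at the upper-left quadrant at $\snm(\l)$. There are two cases.
\begin{enumerate}
\item If no stable leaf makes a perfect fit with $\ell$ in its upper left quadrant, then leftmost up from $\snm(\l)$ simply follows $\snm$ for a while. The points $s(\mu_j)$ must lie on $\snm$ or on markers that accumulate on $\snm$ from the left.
\item If there is such a perfect fit leaf $\ell^s$, then leftmost up from $\snm(\l)$ follows the marker $\Phi(\ell^s)$. Since $s$ does not cross markers and $s$ is admissible and leftmost above each $\mu_j$, $s$ must lie on or to the left of $\Phi(\ell^s)$. Because leftmost up from $s(\mu_j)$ already goes as far left as possible, $s$ must coincide with $\Phi(\ell^s)$.
\end{enumerate}
In either case, comparing $s$ with $s^\ell_{\snm(\l)}$ on $[\l,\infty)$ should give equality. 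Otherwise the two admissible sections would cross near $\l$, which is the crossing argument already used in \Cref{lem:lu_or_rd} via \cite[Section 6.14]{CalDun_UC}. The downward direction is symmetric.

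An alternative, cleaner route mimics \Cref{lem:lu_or_rd} directly. Set $s'=s^\ell_{\snm(\l)}$ and let $\mu^+$ and $\mu^-$ be the first divergence points above and below $\l$. Then use that $s$ is leftmost up on $(\l,\mu^+]$ and rightmost down on $[\mu^-,\l)$ to force a crossing. The subtle point is that divergence could happen immediately at $\l$. That is exactly where I expect the real obstacle to be: the discontinuity phenomenon at $\snm$ near branching leaves, illustrated in \Cref{fig:i_disc}. I would handle it with the quadrant analysis above.

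Branching of $\l$ only affects nonseparated leaves, not comparable ones, so it does not interfere. Concluding $\l\in\lu(s)\cap\rd(s)$ contradicts $s$ being a limit section. Hence no type (I) sections exist, and by \Cref{lem:type_ie} every section is type (S) or type (E).
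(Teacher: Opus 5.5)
Your reduction is fine up to the point where you need $\l\in\lu(s)\cap\rd(s)$ for the chosen $\l\in B(s)$. That key step has a genuine gap, and it sits exactly in the case you dismiss with ``branching of $\l$ only affects nonseparated leaves, not comparable ones.''

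The local structure of $E_\infty$ at $\snm(\l)$ is governed by the leaves nonseparated from $\l$. Suppose $\l$ is branching from above on the left, i.e.\ has pinching in its upper left quadrant. Then the perfect fit leaf $\ell^s$ is shared with the neighbouring leaf of the branching chain. Viewed from $\partial_\infty\l$, a whole pinching region of markers converges to $\snm(\l)$: these are the markers over the other leaves $\l'$ of that cataclysm, including markers ending at $\snm(\l')$.

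Your two-case quadrant analysis (no perfect fit, or a single extremal marker $\Phi(\ell^s)$) therefore does not cover this configuration. Here $s$ can approach $s(\l)=\snm(\l)$ from above along pinching markers and be leftmost up from every $\mu>\l$. Since $E_\infty$ is not Hausdorff there, $s(\mu_j)\to\snm(\l)$ does not force $s$ onto the leftmost marker at $\snm(\l)$. Nothing local at $\l$ then rules out $\l\notin\lu(s)$.

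Two further points. Your claim in case (2) that $s$ ``must lie on or to the left of $\Phi(\ell^s)$'' is not justified. Your crossing-argument alternative fails, as you note, precisely when $s$ diverges from $s^\ell_{\snm(\l)}$ at $\l$ itself.

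The paper does not derive a contradiction at $\l$ at all. Assuming without loss of generality that $\l\notin\lu(s)$, the local argument only shows that $\l$ must have pinching in its upper left quadrant; the no-pinching case is the one your analysis handles. The paper then:
\begin{enumerate}
\item passes to the leftmost leaf $\mu_1$ of that cataclysm and shows $\mu_1\in\lu(s)$;
\item if $\mu_1\notin\rd(s)$, reruns the argument to get pinching in the lower right quadrant of $\mu_1$, and moves to the rightmost leaf $\mu_2$ of that cataclysm;
\item continues in this ``left up, right down'' fashion.
\end{enumerate}
All the $\mu_i$ lie in the master set of $\l$. By \Cref{lem:master_set_finite}, which uses hyperbolicity of $M$, the process stops at some $\mu_n$. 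The extremal choice of $\mu_n$ within its cataclysm then forces $\mu_n\in\lu(s)\cap\rd(s)$, contradicting \Cref{lem:type_s}.

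So the missing idea is that the contradiction must be found at another leaf of the cataclysm (and master set) containing $\l$, not at $\l$ itself. A finiteness input is needed to make that search terminate, and your proposal never uses one.
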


\begin{proof}
	Suppose toward contradiction that $s$ is a type (I) section. Let $\l\in B(s)$, and suppose without loss of generality that $\l\notin\lu(s)$. Since $s$ is a type (I) section, we know that for any $\l_\vep>\l$, we have $\l_\vep\in\lu(s)$. Since $\l\notin\lu(s)$, there is a marker $m$ whose lower endpoint is at $s(\l)=\snm(\l)$ and which travels left up, which $s$ does not take. Furthermore, $s$ takes no marker that begins at $\snm(\l)$ and travels up. If it did, the region $\rd(s)$ would extend up from $\l$, and we would not have $\l\in B(s)$. Thus, the setup is as pictured in the left of \Cref{fig:ufto_i}.
	
	\begin{figure}[h!]
		\centering
		\includegraphics[width=0.8\linewidth]{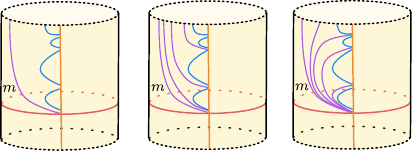}
		\caption{On the left, the setup for the proof of \Cref{lem:no_i}. All the markers drawn in blue are in $s$, while the purple markers are not. The red leaf is $\partial_\infty\l$. In the center, if $\l$ does not have pinching in the upper left quadrant, then $s$ is not leftmost up above $\l$. On the right, the actual setup.}\label{fig:ufto_i}
	\end{figure}

	We must have that $\l$ is branching from above on the left, i.e. has pinching in the upper left quadrant. This is because markers approaching $m$ from the right have lower endpoints approaching $\snm(\l)$. If these lower endpoints aren't eventually equal to $\snm(\l)$, the blue section as drawn isn't leftmost up above $\l$ -- see the center and right side of \Cref{fig:ufto_i}.
	
	Let $\mu_1$ be the leaf that is furthest left in the cataclysm of leaves branching from above containing $\l$. Then we claim $\mu_1\in\lu(s)$; this follows from \Cref{fig:two_views_setup}. 
	
	Now, we may have $\mu_1\notin\rd(s)$. If this is the case, we repeat the above argument to deduce $\mu_1$ has pinching in the lower right quadrant, and switch to the leaf $\mu_2$ that is rightmost in the cataclysm of leaves branching from below containing $\mu_1$. We continue on in this fashion, until we stop at some leaf $\mu_n$. Note that this process must terminate after finitely many steps, as each $\mu_i$ is in the same master set as $\l$, and master sets for Anosov flows in hyperbolic 3-manifolds are finite.
	
	We claim that $\mu_n\in\lu(s)\cap\rd(s)$, and thus $s$ is in fact a type (S) section. To see this, note that we already have one of $\mu_n\in\lu(s)$ or $\mu_n\in\rd(s)$, depending on the parity of $n$. So, suppose $\mu_n\notin\rd(s)$. Then the argument above shows that $\mu_n$ has branching from below and on the right. However, $\mu_n$ was obtained by going `left up, right down' within its cataclysm as much as possible.
\end{proof}

Putting everything together proves \Cref{prop:base}. This implies that a section is a limit section if and only if it is type (E). Thus, we use the terms `limit section' and `special section' in place of 'type (E)' and 'type (S)' from here on out.

\section{Extremal behavior and master sets}\label{sec:qle}
In this section, we first define the Cannon--Thurston map for $\mc C^\ell$ using the base of a section. We then define and discuss \emph{quadrant-local extremality}, which is the idea we will need to prove that this map is well-defined and continuous.

\subsection{Defining the Cannon--Thurston map}\label{sec:ct_def}
We are now ready to define the Cannon--Thurston map for $\mc C^\ell$. See \ref{def:i} and \ref{def:i_hat} for definitions of $i$ and $\wh i$.

\begin{definition}\label{def:ct}
	Define
	\[
	\ct\colon\mc C^\ell\to S^2_\infty
	\]
	as follows. 

For $s$ a section of $\mc C^\ell$, let
\[
\ct(s)=i_\l\circ s(\l)
\]
for $\l$ any leaf of $\mc L^u$ in $B(s)$, or
\[
\ct(s)=\wh i(\varepsilon)
\]
for $\varepsilon$ a point of $\Ends(\mc L^u)$ in $B(s)$.
\end{definition}

This function is not a priori well-defined, as special sections do not always have base a single leaf. And of course, it is not at all clear that this function is continuous. The tool we will need to prove both well-definedness and continuity is the idea of \emph{quadrant-local extremality}, which is a type of extremal behavior that sections can exhibit over zigzag paths.

To give some intuition for this, first consider what is special about the behavior of a special section over its base -- it is both leftmost up and rightmost down in this region. Quadrant-local extremality is a generalization of this property, where a section is only required to be leftmost/rightmost up/down within a quadrant.

The importance of quadrant-local extremality for our purposes is that it relates when portions of sections map to master sets under $\Psi$, as in \Cref{lem:qle_sprig}, with the leftmost up and rightmost down behavior of sections under limits, as in \Cref{prop:ql_lim}.

\subsection{Extremal behavior over zigzag paths}

We now formally define quadrant-local extremality and prove the properties that we will need for \Cref{sec:thm_proof}.

\begin{definition}\label{def:qle}
	Let $\gamma\subset\mc L^u$ be a zigzag path, let $m$ be a marker, and let $m^+$ and $m^-$ be the upper and lower limit points of $m\mid_{\gamma}$ in $\snm$, if they exist. We say $m$ is \defn{quadrant-locally extremal} over $\gamma$ (abbreviated \defn{ql-extremal}) if $m$ is left or rightmost up from $m^-$ among markers with the same endpoint in the same quadrant, and left or rightmost down from $m^+$ among markers with the same endpoint in the same quadrant. See \Cref{fig:qle}.
	
	A section $s$ is \defn{quadrant-locally extremal} over $\gamma$ if the following three conditions hold:
	\begin{enumerate}
		\item Every marker in $s\mid_\gamma$ is ql-extremal over $\gamma$, 
		\item The set $s\cap\snm$ is discrete, and
		\item For all breakpoints $\l\in\gamma$,
		\[
		s(\l)=\snm(\l).
		\]
	
	\end{enumerate}
\end{definition}

\begin{figure}[h!]
	\centering
	\includegraphics[width=0.55\linewidth]{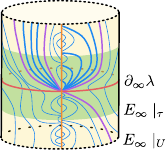}
	\caption{Among all markers ending at $\snm(\l)$, only the purple ones are ql-extremal over $\tau$, where $E_\infty\mid_\tau$ is shaded in green. Markers in bold have $\snm(\l)$ as an endpoint. In the upper right quadrant, the upper endpoints of markers ending at $\snm(\l)$ limit onto $\snm(\l)$.}\label{fig:qle}
\end{figure}

Note that a section is always ql-extremal over its base. In the case of a limit section, this is a vacuous statement, so suppose $s$ is a special section. Then this follows from the fact that $s$ is leftmost up and rightmost down over its entire base.

We now show how ql-extremality relates to master sets.

\begin{lemma}\label{lem:qle_sprig}
	Suppose a section $s$ is ql-extremal over a zigzag path $\gamma$. Then $\Psi(s\mid_\gamma)$ is contained in a master set.
\end{lemma}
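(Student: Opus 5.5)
We translate everything to the orbit space via $\Psi$ and show that the pieces of $s\mid_\gamma$ are glued together by shared ideal endpoints in $\partial\orb$, so that their images lie in a single master set. Over $\gamma$, the section $s$ meets $\snm$ in a discrete set by condition (2). Away from this set it lies on markers, which are images under $\Phi$ of leaves (more precisely, segments of leaves) of $\orb^s$. Each point $\snm(\l)$ with $s(\l)=\snm(\l)$ is sent by $\Psi$ to $\ell^+$, an endpoint of the unstable leaf $\ell$. So $\Psi(s\mid_\gamma)$ decomposes into finitely or countably many pieces: segments of stable leaves $\ell^s_k$ (from the markers) and endpoints $\ell^+$ of unstable leaves (from the nonmarker points). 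Since master sets are unions of stable and unstable leaves together with endpoints that are closed under sharing endpoints in $\partial\orb$, it suffices to show two things. First, consecutive pieces are linked, in the sense that the relevant leaves lie in a common chain of leaves sharing ideal endpoints. Second, the point $\ell^+$ lies in the same master set as the full leaf $\ell$. The second point is immediate: $\ell^+$ and $\ell^-$ both lie in the master set rooted at $e^-(\ell)=e(\ell^\pm)$.

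The key step is linking. Consider a marker $m\subset s\mid_\gamma$ with lower limit $m^-=\snm(\l)$, and write $\ell^s=\Phi^{-1}(m)$. By ql-extremality, $m$ is leftmost or rightmost among the markers in its quadrant ending at $\snm(\l)$. I claim that $\ell^s$ then makes a perfect fit with $\ell=\Theta(\l)$ (or, in the degenerate case, with a branching unstable leaf adjacent to $\ell$ in that quadrant). I would argue this in $\orb$, as in the discussion of quadrants in \Cref{sec:anosov_uc_background}. Markers ending at $\snm(\l)$ in a given quadrant correspond to stable leaves whose half-leaves accumulate onto the end of $\ell$ in that quadrant. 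The extremal one among them is the frontier leaf, and a frontier leaf of a product-like region must make a perfect fit with $\ell$ or share an ideal point with it. In either case, $\ell^s$ and $\ell$ share an endpoint in $\partial\orb$, so $e^+(\ell^s)=e(\ell^\pm)=e^-(\ell)$. The same argument applies at the upper limit $m^+$.

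Assembling the pieces then goes as follows. Condition (3) puts $s$ on $\snm$ at every breakpoint. Across a cataclysm, the two nonseparated unstable leaves at the breakpoints lie in a branching chain, and adjacent leaves in a branching chain share endpoints in $\partial\orb$. This links the pieces across jumps. Along each embedded interval of $\gamma$, discreteness lets us order the nonmarker hits and the markers between them, and by the previous paragraph each marker is linked to the nonmarker points at both of its ends. Inducting along $\gamma$ therefore shows that every leaf involved lies in the master set rooted at $e^-(\ell_0)$, where $\lambda_0$ is the initial leaf of $\gamma$. If $\gamma$ begins or ends on a marker without reaching $\snm$, there is no endpoint to link at that end, but nothing needs to be linked there, so the argument is unaffected.

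I expect the main obstacle to be the claim that ql-extremal markers correspond to stable leaves making perfect fits. This requires carefully matching the left/right up/down extremality in $E_\infty$ with the frontier structure in $\orb$. It also requires handling the case, shown in \Cref{fig:qle}, where upper endpoints of markers accumulate onto $\snm(\l)$, that is, where there is pinching in the quadrant. There the extremal marker should be the one adjacent to the pinching, coming from a perfect fit with a neighboring branching leaf, and I would treat this case using the picture in \Cref{fig:two_views}.
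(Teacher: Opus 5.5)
Your proposal is correct and follows essentially the same route as the paper. On each maximal embedded interval of $\gamma$, discreteness and ql-extremality show that each marker's stable leaf makes a perfect fit with (hence shares an ideal point with) $\ell$ or a leaf nonseparated from $\ell$; condition (3) and branching chains then glue the intervals together across breakpoints. There are two small slips. First, the common root is $e^-$ of the unstable leaf at the first nonmarker point of $s\mid_\gamma$, or $e^+$ of the stable leaf if $s\mid_\gamma$ lies in a single marker; it is not $e^-(\ell_0)$ when $s(\lambda_0)$ is a marker point. Second, in the pinching case the extremal marker may make a perfect fit with the leaf at the far end of the branching chain rather than one adjacent to $\ell$, which is harmless since the whole chain lies in one master set.
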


\begin{proof}
	Let $I\subset\gamma$ be a maximal embedded interval. We will show $\Psi(s\mid_I)$ is contained in a master set, and then show that this master set is constant as $I$ varies.
	
	If $s\mid_I$ is contained in a single marker, then $\Psi(s\mid_I)$ is contained in a single stable leaf of $\orb^s$, and we're done. So, suppose $s(\l)=\snm(\l)$ for some leaf $\l\in I$. Then by property (2), $s(\l)$ is the endpoint of a marker on any side of $\l$ contained in $I$. Let $m$ be such a marker, and suppose $s(\l)$ is its lower endpoint. By property (1), $m$ is ql-extremal over $I$. Then for any leaf $\nu\in I$ just above $\l$, we see that $m$ intersects $\nu$ but not $\l$. Mapping this to $\overline{\orb}$ via $\Psi$ shows that $\Psi(m)$ makes a perfect fit with $\ell$ or a leaf nonseparated from $\ell$. Thus, $\Psi(s\mid_I)$ is contained in a master set.
	
	Now consider a pair of break points $\nu_1,\nu_2\in\gamma$. By property (3), we have $s(\nu_i)=\snm(\nu_i)$ for $i=1,2$. Since $\nu_1$ and $\nu_2$ are nonseparated, we have that $\Psi(s(\nu_1))$ and $\Psi(s\nu_2))$ are endpoints of nonseparated leaves. Thus, for the adjacent intervals $I_1$ and $I_2$ containing $\nu_1$ and $\nu_2$ respectively, we have that $\Psi(s\mid_{I_1})$ and $\Psi(s\mid_{I_2})$ are contained in the same master set.
\end{proof}

Since $\eplus$ is constant on leaves of $\orb^s$ within a single master set, we have the following corollary.

\begin{corollary}\label{cor:ql_i}
	Suppose a section $s$ is ql-extremal over a zigzag path $\gamma$. Then for any leaves $\l,\mu\in\gamma$, we have
	\[
	i\circ s(\l)=i\circ s(\mu).
	\]
\end{corollary}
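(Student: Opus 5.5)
The plan is to combine \Cref{lem:qle_sprig} with the description of master sets from \Cref{sec:quasigeodesic_background}, using the factorization $i=\eplus\circ\Psi$. By \Cref{lem:qle_sprig}, there is a master set $\mc X$, rooted at some $z\in S^2_\infty$, such that $\Psi(s\mid_\gamma)\subseteq\mc X$. It then suffices to show that $\eplus$ takes the value $z$ at every point of $\Psi(s\mid_\gamma)$. Once that is done, for any $\l,\mu\in\gamma$ we get
\[
i\circ s(\l)=\eplus\circ\Psi(s(\l))=z=\eplus\circ\Psi(s(\mu))=i\circ s(\mu).
\]

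To verify this, split the points of $s\mid_\gamma$ into two kinds.

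\emph{Marker points.} If $s(\l)$ lies on a marker, then $\Psi(s(\l))=\Phi^{-1}(s(\l))$ lies on a stable leaf of $\orb^s$, since $\Phi$ sends stable leaves to markers. I will check that the proof of \Cref{lem:qle_sprig} actually places this whole stable leaf in $\mc X$: there, $\Psi(m)$ makes a perfect fit with $\ell$ or with a leaf nonseparated from $\ell$. The stable leaves of $\mc X$ together with their endpoints form exactly $(\eplus)^{-1}(z)$, so $\eplus(\Psi(s(\l)))=z$.

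\emph{Nonmarker points.} If $s(\l)=\snm(\l)$, then $\Psi(s(\l))=\ell^+\in\partial\orb$, and on $\partial\orb$ we have $\eplus=e$. I need $\ell^+\in\mc X$, so that $e(\ell^+)=z$. Here I would use that the unstable leaf $\ell$ shares its endpoint with the stable leaf of $\mc X$ coming from an adjacent marker. Master sets are closed under sharing endpoints, so $\ell$ and both of its endpoints lie in $\mc X$. Consequently, whichever endpoint $\Psi$ picks, $\eplus$ evaluates to $z$; this also makes the arbitrary choice of $\ell^+$ in the definition of $\Psi$ harmless.

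The main obstacle is a degenerate case: $s\mid_\gamma$ might avoid markers entirely, or $\gamma$ might consist only of breakpoints joined across cataclysms. In that case the master set must be produced from nonseparated unstable leaves alone. Here I would use the last paragraph of the proof of \Cref{lem:qle_sprig}: nonseparated leaves $\ell_1,\ell_2$ are linked through a stable leaf making perfect fits with both, so they lie in a common master set, which contains their endpoints. Applying $e$ to these endpoints then gives a common value.
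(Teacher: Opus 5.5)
Your proposal is correct and follows the paper's route: the paper deduces the corollary from \Cref{lem:qle_sprig}, together with $i=\eplus\circ\Psi$ and the fact that $\eplus$ is constant on the stable leaves (and boundary points) of a single master set. You also check that marker points land on \emph{stable} leaves of $\mc X$ rather than unstable ones (where $\eplus$ is not constant), and that nonmarker points land in $\partial\orb$, where $\eplus=e$; these are exactly the details the paper's one-line justification leaves implicit.
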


Our primary tool for proving ql-extremality is the following proposition.

\begin{proposition}\label{prop:ql_lim}
	Suppose 
	\[
	\lim_{j\to\infty}s_j=s
	\]
	for sections $\{s_j\}_{j\in\N}$ and $s$ of $\mc C^\ell$. Let $\gamma$ be the zigzag path between leaves $\l,\mu\in\mc L^u$. Suppose $\gamma$ is oriented with the $s$-current, but oriented against the $s_j$-current for all $j$. Then $s$ is ql-extremal over $\gamma$.

\end{proposition}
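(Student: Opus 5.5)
We verify the three conditions of \Cref{def:qle} separately, using the two hypotheses in tandem: over every maximal embedded interval $I\subseteq\gamma$ travelling up (resp. down), we have $I\subseteq\lu(s)$ (resp. $\rd(s)$), while $I\subseteq\rd(s_j)$ (resp. $\lu(s_j)$) for every $j$. By \Cref{lem:lu_or_rd} and \Cref{lem:type_s}, applied along $\gamma$ together with \Cref{lem:change_at_cataclysm} at each cataclysm, the colorings of $s$ and of the $s_j$ alternate at breakpoints in opposite ways. We also use that convergence in $\mc C^\ell$ is pointwise convergence of sections in $E_\infty$.

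\textbf{Condition (3).} At a landing leaf of $\gamma$ this is \Cref{lem:i_jump} applied to $s$ directly. For launching leaves, and for the endpoints $\l,\mu$ when these are breakpoints, we apply \Cref{lem:i_jump} to the reversed path $\overline\gamma$. This path is oriented with the $s_j$-current, so $s_j(\nu)=\snm(\nu)$ at every breakpoint $\nu$ that is a landing leaf for $\overline\gamma$, i.e. a launching leaf for $\gamma$. At such $\nu$ we have $s_j(\nu)=\snm(\nu)$ for all $j$, so $s(\nu)=\lim_j s_j(\nu)=\snm(\nu)$ by continuity of $\snm$ and pointwise convergence. Hence $s$ lies on $\snm$ at every breakpoint.

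\textbf{Conditions (1) and (2).} Fix a maximal interval $I$, say travelling up. Then $s$ is leftmost up along $I$, while each $s_j$ is rightmost down along $I$, i.e. when read upward it is the ``most right'' admissible continuation. Let $m$ be a marker in $s\mid_I$ with lower limit point $m^-=\snm(\nu)$ in the quadrant picture. We aim to show that $m$ is extremal among markers ending at $\snm(\nu)$ in that quadrant, with leftmost-ness coming from $s$ and rightmost-ness from the limit of the $s_j$.

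The point is this. Just above $\nu$, the $s_j$ converge to $m$. Each $s_j$, being rightmost down there, must follow the rightmost marker through its point as it descends. The downward continuations of these markers converge, so the limit marker $m$ is rightmost down from $m^+$ among markers sharing that endpoint in the quadrant. Symmetrically, $s$ being leftmost up makes $m$ leftmost up from $m^-$.

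Discreteness of $s\cap\snm$ comes from the same picture. Suppose $s$ met $\snm$ at an accumulating set of leaves in $I$. Then, because $s$ is leftmost up, it would have to be squeezed between spiralling markers. Since the $s_j$ are rightmost down and converge to $s$, the $s_j$ would be forced to cross $s$ or the markers, contradicting admissibility and the non-crossing of sections from \cite[Section 6.14]{CalDun_UC}. Alternatively, by \Cref{lem:qle_sprig}, an accumulating set of touches would produce infinitely many leaves in one master set, which \Cref{lem:master_set_finite} rules out.

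\textbf{Main obstacle.} The hard step is making the limiting argument for condition (1) rigorous. Markers do not vary continuously, and limits of markers in $s_j$ may coalesce with $\snm$ in non-Hausdorff ways near branching leaves. I would handle this by passing to $\orb$ via $\Phi^{-1}$, where markers become stable leaves and $E_\infty\ssm\snm$ is a plane. There, a limit of stable leaves whose lower ends converge to $\Psi(\snm(\nu))$ must make a perfect fit with $\ell$ or with a leaf nonseparated from $\ell$, and a perfect fit is exactly the extremality asserted in \Cref{def:qle}. The quadrant is fixed because the $s_j$ and $s$ approach $\snm(\nu)$ from a fixed side.
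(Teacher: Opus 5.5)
Your decomposition into the three conditions and your argument for condition (3) are exactly the paper's. For (3), you apply \Cref{lem:i_jump} to $\gamma$ at landing leaves, and to $\overline{\gamma}$ (which is oriented with the $s_j$-current) at launching leaves, then take the limit. The gaps are in conditions (2) and (1).

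For discreteness, your appeal to \Cref{lem:qle_sprig} is circular. That lemma assumes $s$ is already ql-extremal. Its proof uses property (2) itself to know that each nonmarker point of $s$ is an endpoint of markers of $s$ on both sides, which is what chains the stable leaves into one master set. Your other argument names no actual contradiction: nothing forces the $s_j$ to cross $s$ or a marker, because an $s_j$ can simply run along a single marker beside $s$. That is precisely the case the paper exploits. Take $U\ni\l$ small, with $U\subset\lu(s)$ and $U\subset\rd(s_j)$ for all $j$, and a marker crossing every leaf of $U$. Together with $\snm$, this marker cuts $E_\infty\mid_U$ into rectangles $R_\ell\ni s$ and $R_r$. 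Let $m'$ be a marker of $s$ over some $\nu\in U$ above $\l$. Since each $s_j$ is rightmost down, any $s_j$ that reaches $\snm$ over $[\l,\nu]$ turns into $R_r$ and cannot converge to $s$ there. This includes an $s_j$ agreeing with $s$ at $\nu$, and one approaching $s(\nu)$ from the right, which is fenced in by $m'$. So, after passing to a subsequence, each $s_j$ over $[\l,\nu]$ is a single marker $m_j$. In $\orb$, the stable leaves $\Phi^{-1}(m_j)$ then limit simultaneously onto the infinitely many stable leaves making up $\Phi^{-1}(s\mid_{[\l,\nu]})$. These are therefore pairwise nonseparated, contradicting the finiteness of families of nonseparated stable leaves for a non $\R$-covered Anosov flow on a hyperbolic manifold. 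That finiteness input, applied to a limit of single markers, is what your argument is missing.

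For condition (1), leftmost-up at $m^-$ does follow from $I\subset\lu(s)$, but your argument at the other end $m^+=\snm(\nu')$ does not. The markers carrying the $s_j$ near $m$ need not end at $m^+$ at all, so there is no rightmost-down property at $m^+$ to pass to the limit. Nothing you say prevents the $s_j$ from approaching from outside a pinching region and limiting onto its other boundary marker, so ``rightmost'' is more than you can claim; \Cref{def:qle} only asks for left or rightmost. Your proposed repair in $\orb$ rests on a false principle. Interior markers of a pinching region all end at the same nonmarker point and are limits of one another, yet are not extremal. So no statement of the form ``limits of stable leaves ending at $\Psi(\snm(\nu'))$ make perfect fits'' can give extremality.

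The missing step is the paper's dichotomy, argued by contradiction. Suppose $m$ is not extremal in its quadrant. Then it lies in the interior of a pinching region, so every marker meeting a leaf just below $\nu'$ near $m$ also ends at $\snm(\nu')$ and is non-extremal. An $s_j$ passing near $m$ there would lie on such a marker and hence pass through $\snm(\nu')$. But $\nu'\in\rd(s_j)$, so $s_j$ is rightmost down from $\snm(\nu')$ and cannot descend along a non-extremal marker. Hence no $s_j$ comes near $m$, contradicting $s_j\to s$.
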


\begin{proof}
	We break the proof into three steps. We first show property (2), i.e. that $s\cap\snm$ over $\gamma$ is discrete. To do this, we need to rule out the situation where $\l\in\gamma$ is a leaf such that $s(\l)=\snm(\l)$, and $s(\l)$ is a limit of nonmarker points in $s$ over $\gamma$, say from above. Pick a small neighborhood $U\subset\gamma$ containing $\l$ such that there is some marker $m$ intersecting every leaf in $U$. Then $E_\infty\mid_U$ is composed of two rectangles $R_\ell$, and $R_r$, that both have arcs of $\snm$ and $m'$ as a pair of opposite sides. Suppose without loss of generality that $U\subset\lu(s)$ and $U\subset\rd(s_j)$ for all $j$. The setup is depicted in \Cref{fig:qle_lim_prop2}.
	
		\begin{figure}[h!]
		\centering
		\includegraphics[width=0.4\linewidth]{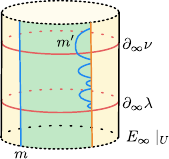}
		\caption{The region $E_\infty\mid U$. The nonmarker section and the marker $m$ divide this into two rectangles, $R_\ell$ (in green) and $R_r$ (in yellow). Other than $m$, all markers drawn are in $s$.}\label{fig:qle_lim_prop2}
	\end{figure}
	
	Let $m'$ be a marker in $s\mid_U$, and let $\nu\in U$ be such that $m'$ intersects $\partial_\infty\nu$. 
	Since 
	\[
	\lim_{j\to\infty}s_j=s,
	\]
	we must have 
	\[
	\lim_{j\to\infty}s_j(\nu)=s(\nu).
	\]
	Eventually, the $s_j$'s must not agree with $s$ at $\nu$. This is because once $m'$ hits $\snm$, the $s_j$'s will continue rightmost down. Therefore, they be constant and not equal to $s$ over $[\l,\nu]\subset U$, as they will be contained in $R_r$, while $s$ is contained in $R_\ell$ over this interval. So, we can pass to a subsequence such that the $s_j$'s approach $s(\nu)$ along $\partial_\infty\nu$ either from the left or the right.
	
	If they approach from the right, we again see that each $s_j$ must cross $\snm$ as it travels down. This is because it is `fenced in' by the marker $m'$. Then between the lower endpoint of $m'$ and $\l$ we can't have that the $s_j$'s are limiting onto $s$, again because they must stay in $R_r$, away from $s$.

	Now suppose the $s_j$'s approach $s(\nu)$ along $\partial_\infty\nu$ from the left. By the above reasoning, they can't cross $\snm$, as they would get stuck in $R_r$. Thus, over $[\l,\nu]$ we must have that each $s_j$ is a single marker $m_j$. Mapping to $\orb$ via $\Phi^{-1}$, we see that the stable leaves $\Phi^{-1}(m_j)$ limit onto the image of $s\mid_U$ under $\Phi^{-1}$, which is an infinite set of stable leaves. These leaves are all nonseparated from each other, as they are in the limit set of a single sequence of stable leaves. This contradicts the fact that on a hyperbolic manifold, sets of nonseparated leaves of $\orb^s$ are finite. Thus, we conclude that property (2) holds.
	
	We now prove property (1), that every marker in $s\mid_\gamma$ is ql-extremal over $\gamma$. Let $\l\in\gamma$ with $s(\l)=\snm(\l)$, and suppose a neighborhood of $\l$ is contained in $\gamma$ (if $\l$ is an endpoint or breakpoint of $\gamma$, the argument below still works, we just ignore leaves either above or below $\l$). Then by property (2), which we have established, we know $s(\l)$ is the upper and lower endpoint of markers $m_1$ and $m_2$, respectively, in $s$. We need to show that $m_1$ (resp. $m_2$) is leftmost or rightmost down (resp. up) among markers with the same endpoint in the same quadrant ending at $\snm(\l)$.
	
	Assume without loss of generality that $\l\in\rd(s)$. Then we know $m_1$ is rightmost down among markers with the same upper endpoint, so we only need to worry about $m_2$. If $m_2$ were not leftmost or rightmost up from $\snm(\l)$ among markers in its quadrant, it would be in the interior of a pinching region. However, this contradicts that $\l\in\lu(s_j)$ for all $j$, since the other markers in the interior of the pinching region are not leftmost up, and therefore are not contained in any $s_j$. This proves property (1).
	
	Finally, we address property (3), that $s(\l)=\snm(\l)$ for all breakpoints $\l\in\gamma$. Let $\l$ be such a breakpoint. If $\l$ is a landing leaf for $\gamma$, then $s(\l)=\snm(\l)$ by \Cref{lem:i_jump}. If $\l$ is a launching leaf for $\gamma$, then it is a landing leaf for $\overline{\gamma}$. Thus, $s_j(\l)=\snm(\l)$ for all $j$ by \Cref{lem:i_jump}, so $s(\l)=\snm(\l)$.
\end{proof}

\section{Proving the main theorem}\label{sec:thm_proof}
In this section, we prove the main theorem of the paper, as well as \Cref{cor:dyn}.

\begin{theorem}\label{thm:ct}
	The map
	\[
	\ct\colon \mc C^\ell\to S^2_\infty
	\]
	defines a continuous, surjective, $\pi_1(M)$-equivariant map. Furthermore, it is a Cannon--Thurston map for $W^u$ in the sense of \Cref{def:ct_for_fol}.
\end{theorem}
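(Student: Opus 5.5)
I will prove the statement in five steps: well-definedness, equivariance, continuity, surjectivity, and the compatibility in \Cref{def:ct_for_fol}. Continuity is the main obstacle.

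\emph{Well-definedness.} By \Cref{prop:base} and \Cref{lem:no_i}, $B(s)$ is either a single end or a connected subset of an embedded line, by \Cref{lem:type_s_in_line}. In the end case there is nothing to check. In the special case, take $\l,\mu\in B(s)$ and let $\gamma=[\l,\mu]$. Since $\gamma\subset \lu(s)\cap\rd(s)$, the section $s$ is leftmost up and rightmost down along all of $\gamma$. Hence every marker in $s\mid_\gamma$ is extremal. No cataclysm is crossed, so condition (3) of \Cref{def:qle} is vacuous. Discreteness of $s\cap\snm$ over $\gamma$ should follow as in \Cref{lem:no_i}: a left-up/right-down section cannot accumulate on $\snm$ without creating infinitely many nonseparated stable leaves, which is ruled out by finiteness of master sets (\Cref{lem:master_set_finite}). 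Thus $s$ is ql-extremal over $\gamma$, and \Cref{cor:ql_i} gives $i\circ s(\l)=i\circ s(\mu)$.

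\emph{Equivariance.} This is formal. The maps $\Phi$, $\Psi$, $e^\pm$, the ends map $f$, and the base $B(s)$ are all natural with respect to the $\pi_1(M)$-action, so $B(g s)=gB(s)$ and $\ct(gs)=g\,\ct(s)$.

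\emph{Continuity.} Take $s_j\to s$ in $\mc C^\ell$. By compactness of $S^2_\infty$, it suffices to show that every subsequence has a further subsequence with $\ct(s_j)\to\ct(s)$. Pick a base point $x\in B(s)$ (a leaf or an end) and pass to a subsequence so that the bases $B(s_j)$ converge in $\mc L^u\cup\Ends(\mc L^u)$; this is possible after passing to the Hausdorffification and its end compactification. Let $x_j\in B(s_j)$, and let $\gamma_j$ be the zigzag path from $x$ to $x_j$. Along $\gamma_j$, the path runs against the $s_j$-current, since it is oriented with the current from $x_j$, and with the $s$-current from $x$. There are two cases.

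\begin{itemize}
\item If the $x_j$ converge to a leaf $\mu$ (or to leaves of a cataclysm), then the paths $\gamma_j$ eventually contain a common path $\gamma$ from $x$ toward $\mu$. Pointwise convergence $s_j(\mu)\to s(\mu)$, combined with \Cref{lem:i_almost_cts}, gives $i\circ s_j(x_j)\to i\circ s(\mu)$. I still need to check that the hypotheses of \Cref{lem:i_almost_cts} hold (all limits on $\snm$ or none). When they fail, I would use condition (3) of ql-extremality at breakpoints. \Cref{prop:ql_lim} makes $s$ ql-extremal over $\gamma$, so \Cref{cor:ql_i} gives $i\circ s(\mu)=i\circ s(x)=\ct(s)$.
\item If the $x_j$ escape to an end $\vep$, then \Cref{lem:i_hat_cts}, suitably uniformized via \Cref{lem:i_hat_wd} and nested spans, gives $\ct(s_j)\to\wh i(\vep)$. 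Again \Cref{prop:ql_lim} shows $s$ is ql-extremal over the ray from $x$ to $\vep$, and I would show that its image under $\Psi$ lies in a single master set whose closure contains $f(\vep)$. This would give $\wh i(\vep)=\ct(s)$.
\end{itemize}

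The second case, and the nonuniformity of $i$ near branching leaves, is where I expect the real difficulty. The tool I would use is finiteness of master sets: a ql-extremal ray meeting a finite master set must have eventually constant $\Psi$-image. In the end case this would force $x=\vep$.

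\emph{Surjectivity.} The image of $\ct$ is compact, nonempty, and $\pi_1(M)$-invariant. Since the action of $\pi_1(M)$ on $S^2_\infty$ is minimal, the image is all of $S^2_\infty$.

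\emph{Compatibility with \Cref{def:ct_for_fol}.} Suppose $s\in\core(\pi_\l)$. Then $s(\l)$ is not shared by other sections in a neighborhood of $s$. I would argue this forces $\l\in\lu(s)\cap\rd(s)$, or else a path from $\l$ to $B(s)$ along which $s$ is ql-extremal; the latter comes from approximating $s$ by the special sections $s^\ell_{s(\l)\pm}$ and applying \Cref{prop:ql_lim}. Then \Cref{cor:ql_i} gives $\ct(s)=i_\l\circ\pi_\l(s)$.
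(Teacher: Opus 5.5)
Your well-definedness, equivariance and surjectivity arguments match the paper, as does the mechanism of \Cref{prop:ql_lim} followed by \Cref{cor:ql_i}. The continuity step, however, has a genuine gap exactly where you identify $\lim_j\ct(s_j)$. Since $\mc L^u$ is not locally compact, you cannot in general pass to a subsequence of bases $x_j\in B(s_j)$ that converges in $\mc L^u$. Convergence in a compactification of $\mc H(\mc L^u)$ only says that the paths $\gamma_j$ eventually follow the path toward $\mu$; for instance, the $x_j$ may lie beyond cataclysms at leaves $\nu_j\to\mu$ while staying away from $\mu$. It says nothing about where the points $s_j(x_j)$ lie in $E_\infty$. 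So ``pointwise convergence $s_j(\mu)\to s(\mu)$ combined with \Cref{lem:i_almost_cts} gives $i\circ s_j(x_j)\to i\circ s(\mu)$'' does not follow. Pointwise convergence controls $s_j$ at the fixed leaf $\mu$, whereas $\ct(s_j)=i\circ s_j(x_j)$ is evaluated at $x_j$. Nothing makes $i\circ s_j$ constant between $x_j$ and $\mu$: \Cref{prop:ql_lim} only gives ql-extremality of the limit $s$, and $i\circ s_j$ typically changes, e.g.\ at landing leaves where \Cref{lem:i_jump} forces $s_j$ onto $\snm$. Joint continuity of $(z,\l)\mapsto\pi_\l(z)$ would give $s_j(x_j)\to s(\mu)$ only if $x_j\to\mu$ in $\mc L^u$, which is precisely what cannot be arranged. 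In addition, when the $\gamma_j$ merely approach $\mu$ you get no ql-extremality of $s$ at $\mu$ itself, and the mixed case of \Cref{lem:i_almost_cts} remains open. Deferring both to ``condition (3)'' does not close them.

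The missing idea is to take the limit in the compact disc $\overline{\orb}$ rather than in the leaf space. Pass to a subsequence with $\Psi(s_j(x_j))\to z_\infty$, reading $\Psi(s_j(x_j))$ as $f(x_j)$ when $x_j$ is an end. Then $\lim_j\ct(s_j)=\eplus(z_\infty)$ by continuity of $\eplus$ on $\overline{\orb}$, and only afterwards do you extract a leaf or end $\l_\infty$ from $z_\infty$ via \Cref{lem:partial_vep}. Next you show that the zigzag rays from the $x_j$ eventually meet every neighbourhood of $\l_\infty$ (\Cref{lem:linf_ray}); this feeds \Cref{prop:ql_lim} on the interior of the zigzag path from $x$ to $\l_\infty$. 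The endpoint $\l_\infty$ needs a separate argument (\Cref{lem:ql_l_linf}): the $s_j$ sit on $\snm$ where their rays join the ray from $\l_\infty$, which excludes a non-extremal marker at $\snm(\l_\infty)$.

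Two smaller remarks. First, your claim that in the end case ql-extremality ``would force $x=\vep$'' is false. Take a special section whose base is a line running out to an end (cf.\ \Cref{ex:base_s}), set $s_j=s$, and let $x_j\in B(s)$ escape along the line. You do not need this claim anyway: \Cref{cor:ql_i} together with \Cref{lem:i_hat_cts} along the ray already gives $\wh i(\vep)=\ct(s)$. Second, for compatibility with \Cref{def:ct_for_fol} the only point is that $s^\ell_{s(\l)}\in\pi_\l^{-1}(s(\l))$. Hence $s\in\core(\pi_\l)$ forces $s=s^\ell_{s(\l)}$, so $\l\in B(s)$ and the formula holds by definition; your alternative branch is unnecessary.
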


The idea behind the proof is this. Well-definedness is essentially immediate from the fact that sections are ql-extremal over their base, so any ambiguity in the choice of basepoint disappears after mapping to $S^2_\infty$. Thus, the main thing to show is continuity. To do this, we suppose we have a sequence of sections $s_j$ converging to a section $s$. If basepoints $\l_j$ of the $s_j$'s converge to a basepoint $\l$ of $s$, we are good to go. However, this is not guaranteed; in fact, plenty of sequences of basepoints of the $s_j$'s will have no convergent subsequence, as $\mc L^u$ is not locally compact. In this case, we can still identify a leaf (or end) $\l_\infty$ that `looks like' a limit point of the basepoints of the $s_j$'s, using the compactness of $\overline{\orb}$. In particular, we will have  
\[
\lim_{j\to\infty}i\circ s_j(\l_j)=i\circ s(\l_\infty).
\]
Then, we show that the section $s$ is ql-extremal over the zigzag path $\gamma$ connecting $\l_\infty$ with some $\l\in B(s)$. By \Cref{cor:ql_i}, this shows that 
\[
i\circ s(\l_\infty)=i\circ s(\l),
\]
which proves continuity. Finally, $\pi_1(M)$-equivariance will follow from the fact that the maps $B$ and $i$ are $\pi_1(M)$-equivariant.

\subsection{Well-definedness}

Proving the map $\ct$ is well-defined is now easy, knowing what we know.

\begin{proposition}\label{prop:ct_wd}
	The map $\ct\colon\mc C^\ell\to S^2_\infty$ is a well-defined function.
\end{proposition}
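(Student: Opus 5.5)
By \Cref{prop:base} and \Cref{lem:no_i}, every section $s$ of $\mc C^\ell$ is either a limit section or a special section. For a limit section, $B(s)$ is a single point $\vep\in\Ends(\mc L^u)$. So $\ct(s)=\wh i(\vep)$ involves no choice, provided $\wh i$ is itself well-defined. That is exactly the content of \Cref{lem:i_hat_wd}, which says $f(\vep)$ does not depend on the zigzag ray chosen, so $\wh i=e\circ f$ is well-defined. Two remarks complete this case. First, $B(s)\cap\mc L^u=\varnothing$ for limit sections, so the two clauses of \Cref{def:ct} never compete. Second, when $s$ is special we have $\lu(s)\cap\rd(s)\neq\varnothing$, so $B(s)=\lu(s)\cap\rd(s)\subset\mc L^u$ contains no ends.

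It therefore remains to treat a special section $s$ and two leaves $\l,\mu\in B(s)$, and to show $i\circ s(\l)=i\circ s(\mu)$. By \Cref{lem:type_s_in_line}, $B(s)$ is connected and lies in an embedded line. Hence the zigzag path $\gamma$ from $\l$ to $\mu$ is the embedded interval $[\l,\mu]$ (after possibly relabeling), and $\gamma\subset B(s)$. The plan is to show that $s$ is ql-extremal over $\gamma$, and then apply \Cref{cor:ql_i} to get $i\circ s(\l)=i\circ s(\mu)$.

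To check ql-extremality, I verify the three conditions of \Cref{def:qle} over $\gamma$.
\begin{enumerate}
\item Condition (3) is vacuous, since $\gamma$ has no breakpoints: its only vertices are its endpoints, and the path is an embedded interval.
\item For condition (1), take any marker $m$ in $s\mid_\gamma$ with lower endpoint $m^-\in\snm$ over some $\nu\in\gamma$. Since $\nu\in\lu(s)$, $s$ is leftmost up from $s(\nu)=m^-$. So $m$ is leftmost up among all markers leaving $m^-$, and in particular among those in its quadrant. Symmetrically, since $\nu\in\rd(s)$, $m$ is rightmost down from $m^+$.
\item Condition (2), discreteness of $s\cap\snm$ over $\gamma$, is the step I expect to need the most care. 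Leftmost-up behavior from a nonmarker point $\snm(\nu)$ either immediately follows a marker, or stays on $\snm$ over an interval. The second option has to be ruled out, or else handled. If $s$ coincided with $\snm$ on an interval, then $\Psi$ would send that portion into a single point of $\partial\orb$, namely the endpoint $\ell^+$ of each leaf. I would then argue directly that $i$ is constant there, because $i\circ\snm$ is conjugate to $e^-$ on $\mc L^u$, as in \Cref{lem:i_restricted_cts}. I would also argue that accumulation of nonmarker points of $s$ is impossible by the finiteness of master sets (\Cref{lem:master_set_finite}), mimicking the argument for property (2) in \Cref{prop:ql_lim}.
\end{enumerate}
If discreteness is awkward to establish outright, the fallback is to redo the proof of \Cref{lem:qle_sprig} directly along $\gamma$. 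Consecutive pieces of $s\mid_\gamma$ meet at nonmarker points where a leftmost or rightmost marker makes a perfect fit in $\orb$. Hence all of $\Psi(s\mid_\gamma)$ lies in one master set, and $\eplus$ is constant on it. This gives $i\circ s(\l)=i\circ s(\mu)$, as required.
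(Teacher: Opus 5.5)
Your overall route is the paper's. For a limit section, $B(s)$ is a single end by \Cref{lem:type_ie} and \Cref{lem:no_i}, and $\wh i$ is well defined by \Cref{lem:i_hat_wd}. For a special section, you show ql-extremality over the path between two base leaves and apply \Cref{cor:ql_i}. Your checks of conditions (1) and (3) are fine.

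\textbf{The gap is condition (2).} You rightly flag it but do not establish it, and your proposed way of ``handling'' it is false. Suppose $s$ agreed with $\snm$ over a nondegenerate subinterval $J\subset\gamma$. Then $\Psi(s\mid_J)=\{n^+\;\mid\;\nu\in J\}$ is not a point, and $i\circ s$ is not constant on $J$. Indeed $i(\snm(\nu))=e^-(n)$, and $e^-(n)=e^-(n')$ forces $n$ and $n'$ into a common master set. By \Cref{lem:master_set_finite} a master set has only finitely many leaves. So $e^-$ is constant on each unstable leaf but finite-to-one on $\mc L^u$. In that scenario $\ct(s)$ would genuinely be ill defined, so this configuration must be ruled out, not absorbed.

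Your other two suggestions do not rule it out either.
\begin{itemize}
\item The proof of property (2) in \Cref{prop:ql_lim} relies on a sequence $s_j\to s$ with $\gamma$ oriented against the $s_j$-current. You have no such sequence here.
\item Your fallback chains marker pieces through perfect fits, as in \Cref{lem:qle_sprig}. This presupposes that $s\mid_\gamma$ consists of marker segments meeting at isolated nonmarker points, which is exactly the discreteness in question.
\end{itemize}

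What is missing is an argument that $s\cap\snm$ is discrete over $B(s)$. You need two things: at a nonmarker point over the base, the section leaves $\snm$ immediately along markers in both directions; and nonmarker points of $s$ do not accumulate there. The paper records ql-extremality over the base, in the remark after \Cref{def:qle}, as a direct consequence of $B(s)=\lu(s)\cap\rd(s)$. Your write-up inherits that step but replaces it with an incorrect justification.

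One way to exclude an interval of $\snm$ inside $\lu(s)\cap\rd(s)$ uses density in $\mc L^u$, by minimality of $W^u$, of leaves branching from above whose nonseparated partner lies to their left. At such a leaf, leftmost up from $\snm$ must leave along the perfect-fit marker, as in the proof of \Cref{lem:change_at_cataclysm}. Symmetrically, use leaves branching from below with partner on the right together with rightmost down. Accumulation of nonmarker points must then be excluded separately, for instance via finiteness of master sets.
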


\begin{proof}
	If $s$ is a special section, then as discussed above, the section $s$ is ql-extremal over its base $B(s)$. Thus, by \Cref{cor:ql_i}, we have 
	\[
	i\circ s(\l)=i\circ s(\mu)
	\]
	for any $\l,\mu\in B(s)$.

	If $s$ is a limit section, then well-definedness follows by \Cref{lem:type_ie}. 
\end{proof}

\subsection{Continuity}
 Suppose $\{s_j\}_{j\in\N}$ is a sequence of sections of $\mc C^\ell$ limiting to $s$. Our aim is to show
\[
\lim_{j\to\infty}\ct(s_j)=\ct(s).
\]
We will do this by showing that every convergent subsequence of $\{\ct(s_j)\}_{j\in\N}$ converges to $\ct(s)$, which suffices since $S^2_\infty$ is compact. So, pass to a convergent subsequence, and relabel so that this subsequence is $\{\ct(s_j)\}_{j\in\N}$. For each $j$, pick some $\l_j\in B(s_j)$, allowing for the possibility that $\l_j\in\Ends(\mc L^u)$.

Consider the sequence 
\[
\{\Psi\circ s_j(\l_j)\}_{j\in\N}\subset \overline{\orb}.
\]
As $\overline{\orb}$ is compact, we can pass to a subsequence $\{s'_j\}_{j\in\N}$ such that 
\[
\{\Psi\circ s'_j(\l'_j)\}_{j\in\N}
\]
converges. Let $z_j'=\Psi\circ s_j'(\l_j')$, and let $z_\infty$ be the limit point. To get a point in $\mc L^u\cup\Ends(\mc L^u)$ from $z_\infty$, we need the following observation.

\begin{lemma}\label{lem:partial_vep}
	If $z\in\partial\orb\ssm\partial\orb^u$, then $z=f(\vep)$ for exactly one $\vep\in\Ends(\mc L^u)$.
\end{lemma}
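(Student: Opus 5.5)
Given $z\in\partial\orb\ssm\partial\orb^u$, I will build a zigzag ray in $\mc L^u$ whose sets $\spn^+(\ell)$ shrink to $z$. Existence comes from this ray; uniqueness comes from the fact that two distinct ends are separated by a leaf.

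\emph{Existence.} Every unstable leaf $\ell$ has endpoints different from $z$, so $z$ lies in exactly one component of $\partial\orb\ssm\{\ell^+,\ell^-\}$. Call that side of $\ell$ the side facing $z$.

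Fix a leaf $\l_0$. I will define $\gamma$ as the set of leaves $\l$ that separate $\ell_0$ from $z$ in $\overline\orb$, meaning $\ell_0$ lies on the side of $\ell$ not facing $z$. Since leaves of $\orb^u$ are disjoint lines in the disk and $\mc L^u$ is simply connected, any two such separating leaves are nested. Moreover, the zigzag path between any two of them consists of leaves that again separate $\ell_0$ from $z$: the intermediate leaves, including those across a cataclysm, lie in the region between the two. So $\gamma$ is a linearly ordered, convex family, that is, a zigzag path or ray starting at $\l_0$.

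$\gamma$ must be infinite. Otherwise it has a last leaf $\ell$, and the region on the $z$-facing side of $\ell$ contains no further unstable leaf separating $\ell$ from $z$. I would use the density argument from \Cref{lem:i_hat_wd}: density of regular periodic points and multi sink-source dynamics \cite[Lemma 2.30, Proposition 3.7]{BFM} give unstable leaves with both endpoints in any small interval. Applied to intervals around $z$, this produces leaves $\ell'$ whose endpoints lie in short arcs on either side of $z$. Such a leaf separates $z$ from $\ell$ and hence belongs to $\gamma$, a contradiction.

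I must also check that $\gamma$ is properly embedded and tends to an end $\vep$. If it accumulated on a leaf in $\orb$, then $z$ would be an endpoint of a limit leaf, or would lie beyond it. In the first case $z\in\partial\orb^u$, which is excluded. In the second case the accumulating leaf would itself separate, and the ray could be extended further. This part of the argument is fiddly and is where I expect the \emph{main obstacle}: near a branching limit, the nested leaves may converge to a cataclysm, and one has to jump across it along the leaf facing $z$. Once properness is established, $\bigcap_{\l\in\gamma}\spn^+(\ell)$ is a single point by \Cref{lem:i_hat_wd}. That point is $z$, because every $\spn^+(\ell)$ contains $z$. Hence $f(\vep)=z$.

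\emph{Uniqueness.} Suppose $\vep\neq\vep'$ with $f(\vep)=f(\vep')=z$. Their rays eventually separate, so there is a leaf $\l$ such that the ray to $\vep$ eventually lies on one side of $\ell$ and the ray to $\vep'$ on the other. Choosing $\l$ on the divergence part of the rays, the nested intervals for $\vep$ and for $\vep'$ end up in the two different components of $\partial\orb\ssm\partial\ell$. Then $z$ lies in both closures, so $z\in\partial\ell\subset\partial\orb^u$, a contradiction. Cataclysm jumps require a bit of extra care here, but they are handled the same way, using the leaf immediately before the branching.
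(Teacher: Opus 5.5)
Your strategy is essentially the paper's. The paper takes a second point $z'\in\partial\orb\ssm\partial\orb^u$ and asserts that the unstable leaves separating $z$ from $z'$ form a properly embedded zigzag line whose two ends $f$ sends to $z$ and $z'$. You use the leaves separating a fixed $\ell_0$ from $z$ and get a ray, which is the same idea. Your uniqueness argument is exactly the paper's injectivity argument: a leaf on the zigzag path between two distinct ends puts $f(\vep)$ and $f(\vep')$ in different components of $\partial\orb\ssm\partial\ell$. The paper does not justify that the separating set has no last leaf or that it is properly embedded. You try to justify both, but one of those justifications fails as written.

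The failing step is your proof that $\gamma$ has no last leaf. In \Cref{lem:i_hat_wd}, density of regular periodic points and sink--source dynamics give an unstable leaf with both endpoints in a prescribed small arc $J\ni z$. Nothing forces those endpoints onto opposite sides of $z$. A leaf with both endpoints in one component of $J\ssm\{z\}$ does not separate $\ell$ from $z$, so you get no contradiction.

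The right argument is local. Suppose $\ell$ were the last separating leaf, and consider the leaves $\ell_t$ just beyond $\ell$ on the side facing $z$.
\begin{itemize}
\item If $\ell$ is not branching on that side, the arcs cut off by the $\ell_t$ away from $\ell$ exhaust $\spn^+(\ell)\ni z$, so some $\ell_t$ separates.
\item If $\ell$ is branching on that side, the $\ell_t$ converge to a branching chain through $\ell$ whose consecutive members share ideal points. Then $\spn^+(\ell)$ is covered by three things: the arcs cut off by the $\ell_t$, the arcs cut off by the other members of the chain, and the endpoints of those members. Since $z\notin\partial\orb^u$, either some $\ell_t$ or some member of the chain separates $\ell$ from $z$.
\end{itemize}

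The same limit analysis turns your properness sketch into a proof. Take a cofinal sequence $\ell_n$ in $\gamma$ and suppose it accumulates. Then it converges to a leaf or branching chain. The cut-off arcs and endpoints of that chain fill the limiting arc $\bigcap_n\overline{\spn^+(\ell_n)}$, which contains $z$. So $z$ lies in the open cut-off arc of exactly one limit leaf. That leaf then belongs to $\gamma$ and lies beyond the whole sequence, contradicting cofinality. With this analysis written down, the rest of your argument goes through.
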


\begin{proof}
	Let $z\in\partial\orb\ssm\partial\orb^u$, and pick some $z'\in\partial\orb\ssm\partial\orb^u$ with $z'\neq z$. Consider
	\[
	A=\{\ell\in\orb^u\;\mid\;z\text{ and }z'\text{ are in different components of }\partial\orb\ssm\partial\ell\}.
	\]
	Then in $\mc L^u$, $A$ describes a properly embedded minimal broken line limiting to ends $\vep,\vep'\in \Ends(\mc L^u)$ with $f(\vep)=z$ and $f(\vep')=z'$. Thus, the map $f$ is surjective on $\partial\orb\ssm\partial\orb^u$.
	
	To see that $f$ is injective on $\partial\orb\ssm\partial\orb^u$, let $\vep\neq\vep'\in\Ends(\mc L^u)$, let $\gamma$ be the zigzag path between them, and let $\l\in\gamma$. Then $\partial\ell$ separates $\partial\orb$ into two components, with $f(\vep)$ and $f(\vep')$ lying on distinct components. 
\end{proof}

Thus, we can define a function
\[
\wh\Psi\colon\overline{\orb}\to\mc L^u\cup\Ends(\mc L^u)
\]
by sending points in $\orb$ to the unstable leaf they are on, sending points in $\partial\orb^u$ to a choice of unstable leaf with that endpoint, and sending points in $\partial\orb\ssm\partial\orb^u$ to the corresponding end coming from \Cref{lem:partial_vep}. With this in place, we let
\[
\l_\infty=\wh\Psi(z_\infty).
\] 

\begin{lemma}\label{lem:inf_lim}
	We have 
	\[
	\lim_{j\to\infty}i\circ s'_j(\l'_j)=i\circ s(\l_\infty).
	\]
\end{lemma}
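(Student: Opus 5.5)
The plan is to push everything through the maps $\eplus$ and $\Psi$ and use the continuity of $\eplus$ on the compact disk $\overline{\orb}$. By \Cref{def:i} and \Cref{def:i_hat}, for each $j$ we have $i\circ s'_j(\l'_j)=\eplus(z'_j)$. If $\l'_j$ is a leaf, this is $i=\eplus\circ\Psi$; if $\l'_j$ is an end, we interpret $s'_j(\l'_j)$ so that $z'_j=f(\l'_j)$ and $\wh i=e\circ f$. Since $\eplus\colon\overline{\orb}\to S^2_\infty$ is continuous and $z'_j\to z_\infty$, we get
\[
\lim_{j\to\infty}i\circ s'_j(\l'_j)=\eplus(z_\infty).
\]
So the lemma reduces to showing $\eplus(z_\infty)=i\circ s(\l_\infty)$. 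Here, as in \Cref{def:ct}, $i\circ s(\l_\infty)$ means $\wh i(\l_\infty)$ when $\l_\infty$ is an end. I will split into three cases according to where $z_\infty$ lies, matching the definition of $\wh\Psi$.

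\textbf{Case 1: $z_\infty\in\partial\orb\ssm\partial\orb^u$.} Then $\l_\infty=\vep$ is the end with $f(\vep)=z_\infty$ given by \Cref{lem:partial_vep}. Hence $\wh i(\vep)=e(f(\vep))=e(z_\infty)=\eplus(z_\infty)$, and we are done.

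\textbf{Case 2: $z_\infty\in\partial\orb^u$.} Then $\l_\infty$ is a leaf with $z_\infty\in\partial\ell_\infty$.
\begin{enumerate}
\item If $s(\l_\infty)=\snm(\l_\infty)$, then $i\circ s(\l_\infty)=e(\ell_\infty^\pm)=e(z_\infty)$ by the discussion in \Cref{sec:i}, and we are done.
\item If $s(\l_\infty)$ is a marker point, I would argue that the $z'_j$ must approach $z_\infty$ through the unstable saturation of a neighborhood of $\l_\infty$. Using $s'_j(\l_\infty)\to s(\l_\infty)$ and the fact that sections cannot cross markers, I would show that the marker of $s$ at $\l_\infty$ has $\Psi$-image a stable leaf whose endpoint in $\partial\orb$ is $z_\infty$; a leaf with endpoint $z_\infty$ lies in the master set rooted at $e(z_\infty)$, so $\eplus$ of it is $e(z_\infty)$. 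A cleaner route, and the one I would try first, is to replace $\wh\Psi(z_\infty)$ by a more suitable leaf with endpoint $z_\infty$. This is harmless, since all leaves and stable leaves sharing endpoint $z_\infty$ lie in one master set on which $\eplus$ is constant.
\end{enumerate}

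\textbf{Case 3: $z_\infty\in\orb$.} This is the main case and the expected obstacle. Here $\l_\infty$ is the leaf with $z_\infty\in\ell_\infty$, and $\Phi(z'_j)\to\Phi(z_\infty)$ in the Hausdorff plane $E_\infty\ssm\snm$, so in particular $\l'_j\to\l_\infty$ in $\mc L^u$ along a subsequence.
\begin{enumerate}
\item The goal is to show $\eplus(\Psi(s(\l_\infty)))=\eplus(z_\infty)$. I would prove that either $s(\l_\infty)=\Phi(z_\infty)$, or both lie in a single piece of $s$ contained in one master set.
\item To do this, compare $s'_j(\l_\infty)$ with $s'_j(\l'_j)$. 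The zigzag path from $\l'_j$ to $\l_\infty$ is oriented with the $s'_j$-current, because $\l'_j\in B(s'_j)$. Since $\l'_j\to\l_\infty$, for large $j$ this path is short and lies in a small interval around $\l_\infty$, possibly after one jump across a cataclysm.
\item If $\Phi(z_\infty)$ lies on a marker $m$, then for large $j$ the point $s'_j(\l'_j)$ lies on markers crossing $\partial_\infty\l_\infty$ close to $m$. Since admissible sections cannot cross $m$, travelling leftmost up or rightmost down forces $s'_j(\l_\infty)\to\Phi(z_\infty)$. Combined with $s'_j(\l_\infty)\to s(\l_\infty)$, this gives $s(\l_\infty)=\Phi(z_\infty)$.
\item The delicate subcase is when $\Phi(z'_j)$ is separated from the marker through $\Phi(z_\infty)$ by a pinching region or a cataclysm. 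Then I would invoke \Cref{prop:ql_lim}, for the short path between $\l_\infty$ and a leaf where the limiting behavior of $s$ is pinned, to get ql-extremality of $s$ there. \Cref{cor:ql_i} then gives equality of the $i$-values. The finiteness of master sets (\Cref{lem:master_set_finite}) rules out infinitely many such jumps, so this argument terminates.
\end{enumerate}
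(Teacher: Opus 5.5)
\textbf{Verdict: there is a genuine gap, in your Case 2(2).}

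Your reduction is the same as the paper's. You write $i\circ s'_j(\l'_j)=\eplus(z'_j)$ and use continuity of $\eplus$ on $\overline{\orb}$, so everything reduces to $i\circ s(\l_\infty)=\eplus(z_\infty)$. Cases 1 and 2(1) are fine.

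Case 2(2) is not a matter of missing detail: there the identity you want is false. Suppose $z_\infty$ is an endpoint of $\ell_\infty$ and $s(\l_\infty)=\Phi(p)$ with $p\in\ell_\infty$. Then $i\circ s(\l_\infty)=e^+(p)$. On the other hand, $\eplus(z_\infty)=\emin(z_\infty)=e^-(\ell_\infty)=e^-(p)$. Since the flow is quasigeodesic, $e^+(p)\neq e^-(p)$.

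In particular, the claim you propose to prove there, that the stable leaf through $p$ ends at $z_\infty$, is impossible, since it would give $e^+(p)=e(z_\infty)=e^-(p)$. Your fallback does not repair this either. $\eplus$ is not constant on a master set in general: it is constant on the stable leaves and ideal points, but on the unstable leaves of the master set rooted at $z$ it takes values different from $z$. Switching to another leaf $\l''$ ending at $z_\infty$ only helps if $s(\l'')=\snm(\l'')$, which is the same unproved statement.

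\textbf{What is missing.} You need to show that subcase 2(2) never occurs. That is, whenever $z_\infty\in\partial\orb$ and $\l_\infty$ is a leaf, $s(\l_\infty)=\snm(\l_\infty)$, so that $\Psi\circ s(\l_\infty)=\ell_\infty^+$ lies on the same unstable leaf as $z_\infty$. This is exactly the dichotomy the paper's short proof asserts: either $\Psi\circ s(\l_\infty)=z_\infty$, or both are endpoints of unstable leaves in one master set.

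One way to argue it is as follows. Suppose $s(\l_\infty)=\Phi(p)$. Pointwise convergence gives $s'_j(\l_\infty)=\Phi(p_j)$ with $p_j\to p$. A product chart at $p$ then forces every $s'_j$, for $j$ large, to run along the single marker $\Phi(\ell^s_{p_j})$ over a fixed interval $U\ni\l_\infty$, because markers foliate $E_\infty\ssm\snm$. You then have to show this is incompatible with $z'_j\to z_\infty$ by examining the zigzag path from $\l'_j$ to $\l_\infty$, which is oriented with the $s'_j$-current:
\begin{itemize}
\item if $\l'_j\in U$, the product chart puts $z'_j$ near $p$;
\item if the path lands on a leaf of $U$, \Cref{lem:i_jump} puts $s'_j$ on $\snm$ there, contradicting that it lies on a marker over $U$;
\item otherwise, the path's passage through leaves of $U$ near its boundary must be shown to keep $z'_j$ away from $z_\infty$.
\end{itemize}

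\textbf{On Case 3.} This case is easier than you expect. Once $z'_j$ lies in a product chart about $z_\infty\in\orb$, the stable segment through $z'_j$ crosses $\ell_\infty$. The section $s'_j$ must follow that marker, which gives $s(\l_\infty)=\Phi(z_\infty)$ directly. The ``delicate subcase'' of your step 4 cannot arise, and \Cref{prop:ql_lim} is not needed there.
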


\begin{proof}
	By definition,
	\[
	i\circ s(\l_\infty)=\eplus\circ \Psi\circ s(\l_\infty)
	\]
	and
	\[
	i\circ s_j'(\l_j')=\eplus\circ \Psi\circ s_j'(\l_j')
	\]
	for all $j$. Furthermore, we have 
	\[
	\eplus\circ \Psi\circ s(\l_\infty)=\eplus(z_\infty),
	\]
	as either $\Psi\circ s(\l_\infty)=z_\infty$, or $\Psi\circ s(\l_\infty)$ and $z_\infty$ are endpoints of unstable leaves in the same master set. Since 
	\[
	\lim_{j\to\infty}\Psi\circ s'_j(\l'_j)=z_\infty
	\]
	and $\eplus$ is continuous on $\overline{\orb}$, we have
	\[
	\lim_{j\to\infty}\eplus\circ\Psi\circ s'_j(\l'_j)=\eplus(z_\infty),
	\]
	and we are done.
\end{proof}

Pick some $\l\in B(s)$, allowing for the possibility that $\l\in\Ends(\mc L^u)$, and let $\gamma$ be the zigzag path from $\l$ to $\l_\infty$. Our goal now is to show that $s$ is ql-extremal over $\gamma$, which we do via \Cref{prop:ql_lim}. Using \Cref{cor:ql_i}, this will allow us to equate $i\circ s(\l)$, which is how $s$ ``thinks" the Cannon--Thurston map should be defined at $s$, with $i\circ s(\l_\infty)$, which is how the sequence $\{s'_j\}_{j\in\N}$ ``thinks" $\ct$ should be defined at $s$. To do this, we need the following lemma relating the limit point $z_\infty$ we found in $\overline{\orb}$ with what we can see purely in $\mc L^u\cup\Ends(\mc L^u)$.

\begin{lemma}\label{lem:linf_ray}
	Let $\vep\in\Ends(\mc L^u)$ such that $\vep\neq\l_\infty$. Let $\gamma^\vep_j$ be the zigzag ray from $\l'_j$ to $\vep$. Then 
	\[
	\l_\infty\in\limset_{k\to\infty}\gamma^\vep_k.
	\]
\end{lemma}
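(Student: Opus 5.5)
Since $\vep\neq\l_\infty$, the plan is to pick a single leaf $\nu$ of $\mc L^u$ that separates $\l_\infty$ from $\vep$ in the Hausdorffified tree. For $j$ large, $\l'_j$ then lies on the $\l_\infty$ side of $\nu$, so every ray $\gamma^\vep_j$ must pass through a portion of the zigzag path from $\l_\infty$ to $\vep$ near $\l_\infty$. The geometric input is the compactness of $\overline{\orb}$ together with the way $f$ and $\wh\Psi$ identify points of $\partial\orb\ssm\partial\orb^u$ with ends. The statement is really about the convergence $z'_j\to z_\infty$ seen in the leaf space.

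\textbf{Step 1: translate the limit to the leaf space.} Let $\beta$ be the zigzag ray (or path) from $\l_\infty$ to $\vep$. For any leaf $\nu\in\beta$ other than $\l_\infty$, the line $\partial\nu$, i.e. the closure of the unstable leaf $n=\Theta(\nu)$ in $\overline{\orb}$, separates $\overline\orb$. We claim $z_\infty$ lies strictly on the side opposite $f(\vep)$:
\begin{enumerate}
\item If $\l_\infty\in\Ends(\mc L^u)$, this follows from \Cref{lem:partial_vep} and the argument of \Cref{lem:i_hat_wd}.
\item If $\l_\infty$ is a leaf, then $z_\infty\in\overline{\ell_\infty}$.
\item In the endpoint case, we use that $\wh\Psi$ chose a leaf with endpoint $z_\infty$.
\end{enumerate}
In case (3), the other leaves sharing that endpoint are nonseparated from $\ell_\infty$ or are in the same chain. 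So we should take $\nu$ beyond any such cataclysm jump, where $z_\infty$ is not an endpoint of $n$.

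\textbf{Step 2: approximate $\l_\infty$ from the $\l'_j$ side.} Take leaves $\nu_k\in\beta$ converging to $\l_\infty$ along $\beta$. When $\l_\infty$ is an end, take the $\nu_k$ escaping toward it. Because $z'_j\to z_\infty$ and the open side of $\overline{n_k}$ containing $z_\infty$ is a neighborhood of $z_\infty$ in $\overline\orb$, for each $k$ we get $z'_j$ on that side for all large $j$. Now $z'_j=\Psi(s'_j(\l'_j))$ lies on $\overline{\ell'_j}$, or is $f(\l'_j)$ when $\l'_j$ is an end. Hence $\l'_j$ lies in the component of $\mc L^u\cup\Ends(\mc L^u)$ minus $\nu_k$ not containing $\vep$, so $\gamma^\vep_j$ passes through $\nu_k$ and contains the segment of $\beta$ from $\nu_k$ to $\vep$. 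One care point: $z'_j$ might lie on $\overline{n_k}$ itself, namely when $\l'_j=\nu_k$. That case is harmless, since $\nu_k\in\gamma^\vep_j$ anyway.

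\textbf{Step 3: conclude limit points.} Every neighborhood of $\l_\infty$ in $\mc L^u$ contains some $\nu_k$, and $\nu_k\in\gamma^\vep_j$ for all large $j$. Therefore $\l_\infty$ is a limit point of the sets $\gamma^\vep_j$, in the sense of \Cref{sec:limit_background} applied to points chosen on these rays. In the end case, the same holds using the end topology.

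The main obstacle is Step 1 in the case $z_\infty\in\partial\orb^u$ with several leaves sharing that endpoint, as in a branching chain. There, the separation must be arranged with leaves $\nu_k$ accumulating on $\l_\infty$ from the correct side. First I would try choosing $\nu_k$ on the interval of $\beta$ leaving $\l_\infty$ and arguing via perfect-fit and branching structure that $z_\infty$ is not an endpoint of $n_k$.
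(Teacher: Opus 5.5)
There is a genuine gap in Steps 2--3. You only use leaves of the fixed zigzag path $\beta$ from $\l_\infty$ to $\vep$ as gates, and this cannot work when $\l_\infty$ is a branching leaf and $\beta$ leaves $\l_\infty$ by jumping across its cataclysm. Concretely, suppose $\l_\infty$ and $\nu$ are nonseparated and branching from above, and $\beta$ jumps from $\l_\infty$ to $\nu$ and then travels down. Let $U$ be a small interval neighborhood of $\l_\infty$. Then $\beta\cap U=\{\l_\infty\}$. In fact no leaf of $U$ other than $\l_\infty$ separates $\ell_\infty$ from $f(\vep)$: leaves of $U$ above $\l_\infty$ are also above $\nu$, so $\ell_\infty$ and $f(\vep)$ both lie on their lower side, while leaves below $\l_\infty$ have both on their upper side. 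So the leaves $\nu_k\in\beta$ with $\nu_k\to\l_\infty$ do not exist, and taking $\nu$ ``beyond the cataclysm jump'' puts it outside $U$.

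This already happens when $z_\infty$ is an interior point of $\ell_\infty$, which is your case (2), not only in the endpoint case you flagged. No fixed leaf can serve as the gate either. If the $z'_j$ approach $z_\infty$ from the cataclysm side, $\l'_j$ lies just above the cataclysm. Then $\gamma^\vep_j$ descends directly to $\nu$, meets $U$ only in leaves just above the cataclysm (which are not on $\beta$), and misses $\l_\infty$. If the $z'_j$ approach from the other side, $\gamma^\vep_j$ runs up through $\l_\infty$ and then jumps, missing the leaves above the cataclysm. So the witness in $U$ must depend on $j$.

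The missing idea is the paper's description of the ray, $\gamma^\vep_k=\{\l'_k\}\cup\{\l^u : \ell^u \text{ separates } \ell'_k \text{ from } f(\vep)\}$, combined with a $k$-dependent witness. Take a foliation chart $V$ about $z_\infty$ (or the neighborhood $V_U$ bounded by stable and unstable arcs when $z_\infty\in\partial\orb^u$) whose projection to $\mc L^u$ lies in $U$. Once $z'_k\in V$, slide $\ell'_k$ along a stable arc in $V$ toward $f(\vep)$. This produces an unstable leaf in $V$, hence in $U$, that separates $\ell'_k$ from $f(\vep)$; in the ideal-point case it may instead be a leaf nonseparated from one crossing the stable arcs.

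Your gate argument is essentially fine in two situations. One is when $\beta$ leaves $\l_\infty$ along a nondegenerate interval, modulo checking that comparable unstable leaves share no ideal point. The other is when $\l_\infty$ is an end, where it matches the paper's half-plane argument with the $H_n$. As written, though, it does not cover the branching configurations.
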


\begin{proof}
	Let $U$ be a neighborhood of $\l_\infty$. We need to show that there exists some $K_U\in\N$ such that for all $k\geq K_U$, $\gamma^\vep_k\cap U\neq\varnothing$. The first thing to notice is that for all $k$,
	\[
	\gamma_k^\vep=\l'_k\cup\{\l^u\in\mc L^u\;\mid\;\ell^u\text{ separates }\ell'_k\text{ and }f(\vep)\text{ in }\overline{\orb}\}.
	\]
	
	We split the proof into the following three cases, either 
	\begin{enumerate}
		\item $z_\infty\in\orb$,
		\item $z_\infty\in\partial\orb^u$, or
		\item $z_\infty\in\partial\orb\ssm\partial\orb^u$.
	\end{enumerate}
	
	\begin{figure}[h]
		\centering
		\includegraphics[width=0.7\linewidth]{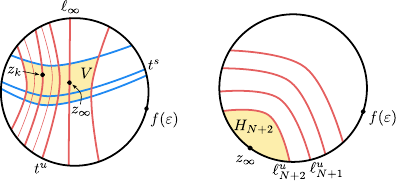}
		\caption{On the left, case (1) in the proof of \Cref{lem:linf_ray}, with $V$ shaded in yellow. On the right is case (3), with $H_{N+2}$ shaded in yellow.}\label{fig:linf_ray}
	\end{figure}

	In case (1), let $V$ be an open foliation chart around $z_\infty$ such that the projection of $V$ to $\mc L^u$ is contained in $U$. Since 
	\[
	\lim_{k\to\infty}z_k'=z_\infty,
	\]
	past some $K_V\in\N$ we have $z_k'\in V$, so $\ell'_k$ intersects $V$. Then using a stable arc $t^s$ in $V$, we may shift $\ell'_k$ slightly to one side to produce an unstable leaf $t^u$ separating $\ell'_k$ from $f(\vep)$ and intersecting $V$. See \Cref{fig:linf_ray}. Thus, $K_U=K_V$ works.
	
	For case (2), let $\ell_1^u,\ell^s_1,\ldots,\ell^u_n,\ell^s_n$ be the leaves of $\orb^{s/u}$ ending at the point $z_\infty$, and arranged in order as in \Cref{fig:linf_ray2} (it might be the case that $\ell_1^u$ or $\ell^s_n$ does not exist). A neighborhood $V$ for $z_\infty$ in $\overline{\orb}$ is given by the set bound by an arc of $\partial\orb$, together with segments of leaves $t^s_1,t^u_s,\ldots,t^s_n,t^u_n$ as in \Cref{fig:linf_ray2}. We can shrink this neighborhood to a neighborhood $V_U$ such that the projection of $V_U$ to $\mc L^u$ is contained in $U$ by shrinking the $t^{s/u}_j$'s along the $\ell^{s/u}_j$'s and by removing halfspaces bound by unstable leaves. We further shrink $V_U$ so that $f(\vep)\notin V_U$.

	Since 
	\[
	\lim_{k\to\infty}z_k'=z_\infty,
	\]
	we must have that past some $K_V\in\N$, $z_k'\in V_U$.  Either 
	\[
	z_k'\in\overline{\mc S^u(t_1^s\cup\ldots\cup t^s_n)},
	\]
	or $z_k'$ is separated from $f(\vep)$ by a leaf $\ell^u$ nonseparated from a leaf in $\mc S^u(t_1^s\cup\ldots\cup t^s_n)$. In the first case, we again can slightly nudge the unstable leaf $z_k'$ is on along the appropriate $t^s_j$ and toward $f(\vep)$ to produce a leaf $\ell^u$ in $\gamma_k^\vep\cap U$. In the second case, $\ell^u\in\gamma_k^\vep\cap U$.
	
		\begin{figure}[h]
		\centering
		\includegraphics[width=0.45\linewidth]{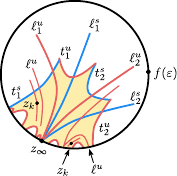}
		\caption{Case (2) for the proof of \Cref{lem:linf_ray}. The set $V_U$ is shaded in yellow. Two options for $z_k$ and the corresponding choices of $\ell^u$ are shown.}\label{fig:linf_ray2}
	\end{figure}
	
	For case (3), a neighborhood basis for $z_\infty$ in $\overline{\orb}$ is given by the half-planes $H_n$ bound by a set of leaves 
	\[
	\{\ell^u_n\;\mid\;n\in\N,\ell^u_n\in\orb^u\},
	\] as in \Cref{fig:linf_ray}. Let $V$ be one of these halfspaces. As in case (2), we shrink $V$ to an open set $V_U$ whose projection to $\mc L^u$ is contained in $U$ by picking a smaller halfspace $H_N$ in the above set and removing some halfspaces bound by unstable leaves. As in the previous cases, the $z_k'$'s eventually lie in $H_{N+2}$, so $\ell^u_{N+1}$ is a leaf in $\gamma_k^\vep\cap U$.

\end{proof}

Now we show that $s$ is ql-extremal over $\gamma$. We handle $\text{int}\;\gamma$ and $\{\l,\l_\infty\}$ separately.

\begin{lemma}\label{lem:ql_int}
	The section $s$ is ql-extremal over 
	$\gamma\ssm\{\l,\l_\infty\}$.
\end{lemma}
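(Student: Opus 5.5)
The plan is to apply \Cref{prop:ql_lim} to subpaths of $\gamma$, after checking its orientation hypotheses. Ql-extremality is a local condition along the path: properties (1)--(3) of \Cref{def:qle} are checked at individual leaves and markers. So it suffices to show that every compact zigzag subpath $\gamma'\subset\gamma\ssm\{\l,\l_\infty\}$ is oriented with the $s$-current and against the $s'_j$-current for all sufficiently large $j$. Once this holds, \Cref{prop:ql_lim}, applied to the tail of $\{s'_j\}$ (which still converges to $s$), gives ql-extremality of $s$ over each such $\gamma'$, and exhausting $\gamma\ssm\{\l,\l_\infty\}$ by compact subpaths finishes the proof.

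The $s$-current part is immediate. Since $\l\in B(s)$, every zigzag path starting at $\l$ is oriented with the $s$-current: if $s$ is special this follows from \Cref{lem:type_s} together with \Cref{lem:no_sink_global}, and if $s$ is a limit section it is the definition of $B(s)$. In particular $\gamma$ is oriented with the $s$-current, and hence so is every subpath.

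For the $s'_j$-current, I would use that $\l'_j\in B(s'_j)$. This means every zigzag path out of $\l'_j$ is oriented with the $s'_j$-current, so any zigzag path \emph{toward} $\l'_j$ is oriented against it. The aim is therefore to show that for $j$ large, $\gamma'$ (with its orientation from $\l$ to $\l_\infty$) is an initial piece of the zigzag path from $\l$ to $\l'_j$, or at least agrees with it along $\gamma'$. This is exactly what \Cref{lem:linf_ray} provides. Choose an end $\vep$ on the far side of $\l$ from $\l_\infty$, so that $\gamma$ is contained in the zigzag ray from $\l_\infty$ to $\vep$; if $\l$ is itself an end, take $\vep=\l$. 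By \Cref{lem:linf_ray}, $\l_\infty$ lies in the limit set of the rays $\gamma^\vep_k$ from $\l'_k$ to $\vep$. Now fix a leaf $\nu$ of $\gamma$ strictly between $\l_\infty$ and the far endpoint of $\gamma'$, and take the neighborhood $U$ of $\l_\infty$ to be the component of $\mc L^u\ssm\{\nu\}$ containing $\l_\infty$. Then for large $k$ the ray $\gamma^\vep_k$ meets $U$. Since $\mc L^u$ is simply connected (tree-like), $\gamma^\vep_k$ must then pass through $\nu$ and contain the whole segment of $\gamma$ from $\nu$ toward $\vep$, and in particular $\gamma'$. Because $\gamma^\vep_k$ is oriented from $\l'_k$ toward $\vep$, it traverses $\gamma'$ in the direction opposite to $\gamma$. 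Reversing, $\gamma'$ traversed from $\l$ toward $\l_\infty$ is a subpath of the path from $\vep$ (or $\l$) toward $\l'_k$, and it is therefore oriented against the $s'_k$-current by \Cref{lem:no_sink_global} applied to the reversed path out of $\l'_k$.

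I expect the main obstacle to be the topological step in the previous paragraph. The issue is that "meeting a small neighborhood $U$ of $\l_\infty$ forces the ray to pass through $\nu$" must be justified in the non-Hausdorff leaf space. This is delicate when $\l_\infty$ is a branching leaf, where neighborhoods of nonseparated leaves overlap, or when $\l_\infty$ is an end. There I would choose $\nu$ to avoid the cataclysm containing $\l_\infty$, work with the Hausdorffification $\mc H$, and argue that a minimal zigzag path from a point on the $\l_\infty$-side of $\nu$ to $\vep$ must cross $\nu$. A secondary subtlety arises when the endpoints $\l$ or $\l_\infty$ are breakpoints adjacent to $\gamma'$: breakpoint-orientation conventions at the boundary of $\gamma'$ need care, which is exactly why the lemma excludes $\{\l,\l_\infty\}$.
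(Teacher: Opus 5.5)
Your argument is correct and is essentially the paper's proof. Like the paper, you choose $\vep$ beyond $\l$ and use \Cref{lem:linf_ray} to see that the rays $\gamma^\vep_j$ from $\l'_j$ eventually contain the relevant piece of $\gamma$, traversed in the reverse direction. That piece is therefore against the $s'_j$-current, while $\gamma$, emanating from $\l\in B(s)$, is with the $s$-current, and you then apply \Cref{prop:ql_lim} locally.

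Two small corrections. First, do not insist that $\nu$ avoid the cataclysm of $\l_\infty$. If $\gamma$ ends by jumping onto $\l_\infty$ from a launching leaf $\mu$, then $\mu$ is the only possible choice of $\nu$. It still separates $\l_\infty$ from $\vep$ in $\mc L^u$, so taking $\nu=\mu$ works and also covers $\mu$ itself. Second, for special sections, the fact that paths out of a base leaf follow the current comes from \Cref{lem:up_sat}, \Cref{lem:lu_or_rd} and \Cref{lem:change_at_cataclysm}. It does not come from \Cref{lem:no_sink_global}, whose proof assumes $\lu(s)\cap\rd(s)=\varnothing$.
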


\begin{proof}
	Pick an end $\vep\in\Ends(\mc L^u)$ such that the zigzag ray $\gamma_\infty^\vep$ from $\l_\infty$ to $\vep$ passes through $\l$, so $\overline{\gamma}$ is an initial subpath of $\gamma_\infty^\vep$. Let $\mu\in\gamma\ssm\{\l,\l_\infty\}$, and suppose that $\gamma$ is travelling down on the maximal subinterval containing $\mu$ -- the case where $\gamma$ is travelling up is completely analogous. Denote this subinterval by $[\nu_0,\nu_1]$, allowing for the possibility that $\l_\infty=\nu_0$ or $\l=\nu_1$.
	
	By \Cref{lem:linf_ray}, there exists some $k\in\N$  such that 
	\[
	[\mu,\nu_1]\subseteq\bigcap_{j\geq k}\gamma_j^\vep,
	\]
	using the notation in \Cref{lem:linf_ray}. Then for all $j\geq k$,
	\[
	\mu\in\lu(s'_j),
	\]
	as $\gamma_j^\vep$ is a zigzag ray emanating from $\l'_j\in B(s'_j)$ and travelling up as it passes $\mu$.  However, 
	\[
	\mu\in\rd(s),
	\]
	as $\gamma$ is a zigzag ray from $\l\in B(s)$ that is travelling down as it passes $\mu$. See \Cref{fig:ql_int}. Thus, by \Cref{prop:ql_lim}, we have that $s$ is ql-extremal over a neighborhood of $\mu$ inside $\gamma$.
\end{proof}

\begin{figure}[h!]
	\centering
	\includegraphics[width=0.6\linewidth]{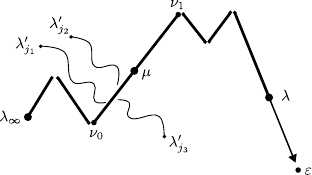}
	\caption{The setup in the proof of \Cref{lem:ql_int}. The subpath $\gamma$ of $\gamma^\vep_\infty$ is in bold. The fainter, squiggly paths represent the portions of the $\gamma_j^\vep$'s before they join up with $\gamma_\infty^\vep$.}\label{fig:ql_int}
\end{figure}

\begin{lemma}\label{lem:discrete_snm}
	The set 
	\[
	s\mid_\gamma\cap\snm
	\]
	is finite.
\end{lemma}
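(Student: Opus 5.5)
The plan is to show that $s|_\gamma\cap\snm$ is discrete in a compact sense and then argue that $\gamma$ only meets finitely many leaves where $s$ can hit $\snm$. Here $\gamma$ is the zigzag path from $\l\in B(s)$ to $\l_\infty$.

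\textbf{Step 1: every point of $s|_\gamma\cap\snm$ lies over a single master set.} By \Cref{lem:ql_int}, $s$ is ql-extremal over $\gamma\ssm\{\l,\l_\infty\}$. So on every compact zigzag subpath $\gamma'$ of the interior, property (2) of \Cref{def:qle} holds and $s\cap\snm$ is discrete there. Moreover, \Cref{lem:qle_sprig} applied to $\gamma'$ places $\Psi(s|_{\gamma'})$ inside one master set $\mc X$. The subpaths are nested and exhaust the interior, and any two of them share a nondegenerate piece. Hence all of these master sets coincide, and $\Psi(s|_{\gamma\ssm\{\l,\l_\infty\}})\subset\mc X$ for a single $\mc X$.

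\textbf{Step 2: count the nonmarker hits.} Each leaf $\nu$ in the interior with $s(\nu)=\snm(\nu)$ gives $\Psi(s(\nu))=\nu^+$, an endpoint of the unstable leaf $n$. Because this point lies in $\mc X$ and $\mc X\cap\orb$ is a union of leaves, $n$ itself is an unstable leaf of $\mc X$; the alternative, that it is the endpoint of a leaf of $\mc X$ sharing that endpoint, also puts $n$ in $\mc X$ by the sharing-endpoint closure property. Distinct leaves $\nu$ give distinct unstable leaves $n$. By \Cref{lem:master_set_finite}, $\mc X$ has finitely many leaves, so only finitely many interior $\nu$ satisfy $s(\nu)=\snm(\nu)$. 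The two endpoints $\l$ and $\l_\infty$ contribute at most two more points, which are only relevant when they lie in $\mc L^u$ rather than $\Ends(\mc L^u)$. This gives finiteness.

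\textbf{Main obstacle.} The delicate point is Step 1: checking that $\Psi$ of a nonmarker point lying over an interior leaf really belongs to the same master set, including at breakpoints. \Cref{lem:qle_sprig} handles this through property (3), so I would verify that breakpoints of compact subpaths are breakpoints of $\gamma$, or endpoints at which $s$ may be assumed nonmarker. A second concern is when $\gamma$ meets an end. There I would exhaust the interior by compact subpaths, with no accumulation argument needed, since finiteness of $\mc X$ already bounds the count uniformly.
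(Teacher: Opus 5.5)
Your proof is correct and is the paper's argument: \Cref{lem:ql_int} gives ql-extremality over $\gamma\ssm\{\l,\l_\infty\}$, and \Cref{lem:qle_sprig} places $\Psi$ of that part of $s$ in a single master set. Each nonmarker point of $s$ over the interior is then an endpoint of a distinct unstable leaf of that master set, so \Cref{lem:master_set_finite} bounds their number, and $\l,\l_\infty$ add at most two more points. Your exhaustion by compact subpaths spells out the passage from local ql-extremality to one master set, which the paper asserts in a single line. It helps there that a point of $\partial\orb$ lies in exactly one master set, since $\eplus=\emin=e$ on $\partial\orb$.
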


\begin{proof}
	This follows from \Cref{lem:qle_sprig} and \Cref{lem:ql_int}. As $s$ is ql-extremal over $\gamma\ssm\{\l,\l_\infty\}$, we have that 
	\[
	\Phi^{-1}(s\mid_{\gamma\ssm\{\l,\l_\infty\}})
	\]
	is contained in a master set. As $M$ is hyperbolic, master sets have finitely many leaves. Every nonmarker point in $s\mid_\gamma\cap\snm$ comes from an endpoint of a leaf in the master set, so this set is finite.
\end{proof}

\begin{lemma}\label{lem:ql_l_linf}
	The section $s$ is ql-extremal over $\gamma$.

\end{lemma}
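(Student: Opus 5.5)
The plan is to upgrade \Cref{lem:ql_int} from the interior of $\gamma$ to all of $\gamma$ by checking the three conditions of \Cref{def:qle} at the two endpoints $\l$ and $\l_\infty$. If either endpoint is an end of $\mc L^u$, there is nothing to check there: no leaf lies over it, and markers near it are handled by the interior argument. So assume the relevant endpoint is a leaf. Condition (2), discreteness of $s\cap\snm$ over $\gamma$, is immediate from \Cref{lem:discrete_snm}. Condition (3) only concerns breakpoints. The endpoints are breakpoints only in a degenerate sense; if $\l$ or $\l_\infty$ is a landing leaf of $\gamma$ or $\overline\gamma$, we argue exactly as in the proof of \Cref{prop:ql_lim}, using \Cref{lem:i_jump} for $s$ (which flows with the current along $\gamma$ from $\l$) and for the $s'_j$ (which flow with the current along the rays $\gamma_j^\vep$ from $\l'_j$). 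So the substance is condition (1): markers of $s\mid_\gamma$ whose upper or lower limit point in $\snm$ lies over $\l$ or $\l_\infty$ must be ql-extremal.

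At $\l$ we use the base. If $s$ is special, then $\l\in\lu(s)\cap\rd(s)$. Any marker of $s$ leaving $s(\l)=\snm(\l)$ in the direction of $\gamma$ is then leftmost up or rightmost down from $\snm(\l)$, hence extremal in its quadrant. If $s$ is a limit section, then $\l$ is an end and the check is vacuous. At the other end of such a marker, the interior result of \Cref{lem:ql_int} already applies.

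At $\l_\infty$ we reuse the mechanism of \Cref{prop:ql_lim}. Take a short subinterval $J\subset\gamma$ adjacent to $\l_\infty$ on which $s$ follows a single marker $m$ ending at $\snm(\l_\infty)$. By \Cref{lem:linf_ray}, $J$ is eventually contained in $\gamma_j^\vep$ for all large $j$, with the orientation of $\overline\gamma$, so $J$ is oriented against the $s'_j$-current and with the $s$-current. If $m$ were not extremal in its quadrant at $\snm(\l_\infty)$, it would lie in the interior of a pinching region of markers sharing that endpoint. The $s'_j$, being leftmost up (or rightmost down) over $J$, cannot take such interior markers. By the fencing argument of \Cref{prop:ql_lim}, they also cannot converge to $m$ over $J$, contradicting $s'_j\to s$.

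The main obstacle is this last step at $\l_\infty$. The subtlety is that $\l_\infty$ need not lie in $\gamma_j^\vep$: $z_\infty$ may sit on the boundary $\partial\orb^u$, or $\l_\infty$ may have been one choice among nonseparated leaves through $\wh\Psi$. So we only control $s'_j$ on the open side of $\l_\infty$. I expect to handle this as follows. First show that $s(\l_\infty)=\snm(\l_\infty)$ or that $m$ continues past $\l_\infty$; the latter case is trivial. In the former case, argue within the quadrant on the $\gamma$-side only, which is all that ql-extremality requires. If necessary, invoke the finiteness of master sets (\Cref{lem:master_set_finite}) to rule out infinitely many candidate markers accumulating.
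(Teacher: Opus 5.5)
Your treatment of $\l$ and of discreteness via \Cref{lem:discrete_snm} matches the paper. The argument at $\l_\infty$, however, has a genuine gap.

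The claim that the $s'_j$, ``being leftmost up over $J$, cannot take such interior markers'' does not follow. Every marker in the pinching region at $\snm(\l_\infty)$ emanates from $\snm(\l_\infty)$, so it starts below every leaf of $J\ssm\{\l_\infty\}$. Knowing that the $s'_j$ are leftmost up from leaves strictly above $\l_\infty$ therefore says nothing about which of these markers they follow. That is all \Cref{lem:linf_ray} gives you, and only on compact subintervals of $J\ssm\{\l_\infty\}$. In \Cref{prop:ql_lim} the pinching argument worked because the leaf carrying the nonmarker point was itself in $\lu(s_j)$.

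Neither restricting to the $\gamma$-side quadrant nor finiteness of master sets repairs this. The interior of a pinching region is a continuum of markers, coming from stable leaves that cross a single leaf nonseparated from $\ell_\infty$. Here is a configuration satisfying your hypotheses:
\begin{itemize}
\item Let $\l'$ be nonseparated from $\l_\infty$, with both branching from above.
\item Let $s'_j=s^\ell_{p_j}$, where $p_j\in\partial_\infty\l'$ lies on a marker $\Phi(\ell^s_j)$.
\item Choose the stable leaves $\ell^s_j$ to cross $\ell'$ and converge to a stable leaf $m^s$ crossing $\ell'$.
\end{itemize}
By \Cref{lem:up_sat}, every leaf just above the cataclysm is in $\lu(s'_j)$. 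Yet over those leaves the $s'_j$ follow non-extremal markers whose lower endpoint, along the branch toward $\l_\infty$, is $\snm(\l_\infty)$. They converge there to the non-extremal marker $\Phi(m^s)$. So the local data you use cannot force $m$ to be ql-extremal.

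What excludes this configuration is the choice $\l_\infty=\wh\Psi(z_\infty)$. Taking $\l'_j=\l'$ gives $z_\infty\in m^s\cap\ell'$, hence $\wh\Psi(z_\infty)=\l'$, not $\l_\infty$. The proof must therefore use where the basepoints $z'_k$ sit in $\overline\orb$ relative to $z_\infty$, and your argument never does.

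The paper proceeds as follows. Since $s(\l_\infty)=\snm(\l_\infty)$ and $\l_\infty$ is a leaf, $z_\infty\in\partial\orb^u$. Take a neighborhood $V$ of $z_\infty$ as in case (2) of \Cref{lem:linf_ray} with $V\cap\Psi(m)=\varnothing$, and split into two cases.
\begin{itemize}
\item If infinitely many $z'_k$ lie in the unstable saturation of the stable arcs bounding $V$, then on a fiber just above $\l_\infty$ the $s'_k$ stay away from $m$.
\item Otherwise the $\l'_k$ are eventually incomparable with $\l_\infty$. The leaves $\nu_k$ where $\gamma^\vep_k$ joins $\gamma^\vep_\infty$ are then landing leaves accumulating on $\l_\infty$, and $s'_k(\nu_k)=\snm(\nu_k)$ by the turning corners rule. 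This prevents the $s'_k$ from converging to an interior pinching marker with lower endpoint $\snm(\l_\infty)$.
\end{itemize}
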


\begin{proof}
	First we show that either $\l\in\Ends(\mc L^u)$, or $s$ is ql-extremal over $\gamma\ssm{\l_\infty}$. Of course, by \Cref{lem:ql_int}, we only need to worry about what is happening at and around $\l$. Suppose $\l\notin\Ends(\mc L^u)$. Then $s$ is a special section by \Cref{lem:no_i}, so $\l\in\lu(s)\cap\rd(s)$. By \Cref{lem:discrete_snm}, we know that $\snm\cap s\mid_\gamma$ is discrete, so $s$ is ql-extremal over $\gamma\ssm\{\l_\infty\}$.
	
	Now we show that either $\l_\infty\in\Ends(\mc L^u)$, or $s$ is ql-extremal over $\gamma\ssm\{\l\}$. As before, we are only concerned with what is happening around $\l_\infty$. Suppose $\l_\infty\notin\Ends(\mc L^u)$. Suppose further that $\gamma$ travels down as it approaches $\l_\infty$; the case where it travels up is analogous. If $s(\l_\infty)$ is a marker point, there's nothing to check, so suppose $s(\l_\infty)=\snm(\l_\infty)$. 
	
	By \Cref{lem:discrete_snm}, we know that over $\gamma$, $\snm(\l_\infty)$ is the lower endpoint of a marker $m$ in $s$. Our goal is to show $m$ is ql-extremal over $\gamma$.

	Suppose toward contradiction that $m$ is not ql-extremal over $\gamma$. Then $\Psi(m)$ is a stable leaf $m^s$ \emph{not} making a perfect fit with $\ell_\infty$ or with a leaf on an end of the branching chain containing $\ell_\infty$. See \Cref{fig:ql_l_linf}.
	
	\begin{figure}[h!]
		\centering
		\includegraphics[width=0.7\linewidth]{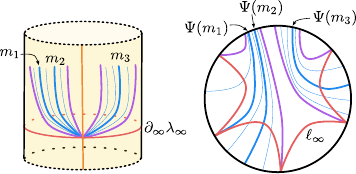}
		\caption{On the left, a possible arrangement of pinching leaves in the upper left and upper right quadrants of $\l_\infty$. Three possibilities for $m$ are $m_1$, $m_2$, and $m_3$ in bold. The thin blue markers are also possibilities, while the purple markers are not. On the right, the translation of this picture to $\orb$. Note that the picture on the left is not enough to decide how many leaves of $\orb^u$ in the branching chain lie to either side of $\ell_\infty$.}\label{fig:ql_l_linf}
	\end{figure}
	
	Since $s(\l_\infty)=\snm(\l_\infty)$, we have that $z_\infty\in\partial\orb$. Furthermore, we have by \Cref{lem:partial_vep} and the definition of $\wh\Psi$ that $z_\infty\in\partial\orb^u$ since $\l_\infty\notin\Ends(\mc L^u)$. Let $V$ be a small neighborhood of $z_\infty$ in $\overline{\orb}$ as in case (2) of the proof of \Cref{lem:linf_ray} such that $m^s\cap V=\varnothing$. Keeping the notation from the proof of \Cref{lem:linf_ray}, let $t^s_1,\ldots, t^s_n$ be the stable arcs on the frontier of $V$.
	
	Since 
	\[
	\lim_{k\to\infty}z_k'=z_\infty,
	\]
	we have that eventually the $z_k'$'s lie in $V$. If infinitely many of the $z_k'$'s lie in $\mc S^u(t^s_1\cup\cdots\cup t^s_n)$, then $\Phi(m^s)$ cannot be in $s$. This is because on a circle fiber $\partial_\infty\l_{N}$ just above $\partial_\infty\l_\infty$, we see that $s'_k(\l_N)$ stays away from $m$ as $k$ gets large.
	
	So, suppose only finitely many of the $z_k'$'s lie in $\mc S^u(t^2_1\cup\cdots\cup t^2_n)$, and delete all such points from the sequence. Since the $z_k'$'s eventually lie in $V$, this means the corresponding leaves $\l'_k$ are eventually incomparable with $\l_\infty$. As in the setup of \Cref{lem:linf_ray}, pick an end $\vep$ such that the zigzag ray $\gamma_\infty^\vep$ from $\l_\infty$ to $\vep$ contains $\gamma$ as an initial subpath. For all $k$, let $\gamma_k^\vep$ denote the zigzag ray from $\l'_k$ to $\vep$. Let $U\subset\mc L^u$ be an interval neighborhood of $\l_\infty$ intersecting $\gamma$ on a nondegenerate interval. Since $\gamma^\vep_k$ and $\gamma^\vep_\infty$ are zigzag rays to $\vep$, their intersection is a subray of each with an initial point $\nu_k\in\mc L^u$. By \Cref{lem:linf_ray}, we have 
	\[
	\l_\infty\in\limset_{k\to\infty}\nu_k,
	\]
	so in particular the $\nu_k$'s eventually lie in $U$. By the turning corners rule, $s'_k(\nu_k)=\snm(\nu_k)$, which prevents the $s'_k$'s from limiting onto an interior pinching marker with lower endpoint $\snm(\l_\infty)$ over $\gamma$.
\end{proof}

We can now apply \Cref{cor:ql_i} to show that $\ct$ is continuous.

\begin{proposition}\label{prop:ct_cts}
	The map 
	\[
	\ct\colon\mc C^\ell\to S^2_\infty
	\] 
	is continuous.
\end{proposition}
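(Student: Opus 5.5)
The proof will assemble the lemmas of this subsection through a subsequence argument. Since $S^2_\infty$ is compact and metrizable, it suffices to show that for any sequence $s_j\to s$ in $\mc C^\ell$, every convergent subsequence of $\{\ct(s_j)\}$ has limit $\ct(s)$. So fix such a subsequence and relabel it $\{s_j\}$. Choose $\l_j\in B(s_j)$, possibly an end; this is allowed by \Cref{lem:B_wd} and \Cref{prop:ct_wd}. Using compactness of $\overline{\orb}$, pass to a further subsequence $\{s'_j\}$ for which $z'_j=\Psi\circ s'_j(\l'_j)$ converges to some $z_\infty$, and set $\l_\infty=\wh\Psi(z_\infty)$.

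When $\l'_j$ is an end, the term $\Psi\circ s'_j(\l'_j)$ should be read as $f(\l'_j)$. In that case $\ct(s'_j)=\wh i(\l'_j)=\eplus(f(\l'_j))$, which is consistent with $i=\eplus\circ\Psi$. Hence, in all cases, $\ct(s'_j)=\eplus(z'_j)$. By \Cref{lem:inf_lim} and continuity of $\eplus$ on $\overline{\orb}$, we get
\[
\lim_{j\to\infty}\ct(s'_j)=i\circ s(\l_\infty),
\]
where the right side is interpreted as $\wh i(\l_\infty)$ if $\l_\infty$ is an end.

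It remains to show that $i\circ s(\l_\infty)=\ct(s)$. Pick $\l\in B(s)$, so that $\ct(s)=i\circ s(\l)$ (or $\wh i(\l)$ if $\l$ is an end). Let $\gamma$ be the zigzag path from $\l$ to $\l_\infty$. By \Cref{lem:ql_l_linf}, $s$ is ql-extremal over $\gamma$, and then \Cref{cor:ql_i} gives that $i\circ s$ is constant along $\gamma$.

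The main obstacle is the endpoints of $\gamma$ that are ends, since \Cref{cor:ql_i} only treats leaves. There are two such cases.

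\emph{Case $\l\in\Ends(\mc L^u)$.} Take leaves $\mu_k\in\gamma$ escaping to $\l$. Along these, $i\circ s(\mu_k)$ is constant by \Cref{cor:ql_i}. By \Cref{lem:i_hat_cts}, this constant value equals the limit $\wh i(\l)$.

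\emph{Case $\l_\infty\in\Ends(\mc L^u)$.} The same argument applies: leaves of $\gamma$ escaping to $\l_\infty$ carry the constant value of $i\circ s$, which equals $\wh i(\l_\infty)$ by \Cref{lem:i_hat_cts}. We also have $\wh i(\l_\infty)=e\circ f(\l_\infty)=\eplus(z_\infty)$, since $f(\l_\infty)=z_\infty$ by \Cref{lem:partial_vep}.

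\emph{Case $\l=\l_\infty$ is an end.} Then $\gamma$ is degenerate and the required equality is immediate.

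Finally, if $\gamma$ consists only of breakpoints, ql-extremality still gives $s=\snm$ at those breakpoints. The values $i\circ s$ there then agree, because nonseparated unstable leaves have the same $e^-$.

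In every case the limit of $\ct(s'_j)$ equals $\ct(s)$. Since every subsequence of $\{\ct(s_j)\}$ has a further subsequence converging to $\ct(s)$, continuity follows.
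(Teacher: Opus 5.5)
Your proposal is correct and follows essentially the same route as the paper. You pass to subsequences so that $\Psi\circ s'_j(\l'_j)\to z_\infty$, identify $\lim\ct(s'_j)$ with $i\circ s(\l_\infty)$ via \Cref{lem:inf_lim}, and then equate this with $\ct(s)$ using ql-extremality of $s$ over $\gamma$ (\Cref{lem:ql_int}, \Cref{lem:ql_l_linf}) together with \Cref{cor:ql_i} and \Cref{lem:i_hat_cts}; your extra bookkeeping (reading $\Psi\circ s'_j(\l'_j)$ as $f(\l'_j)$ when $\l'_j$ is an end, and the degenerate cases $\l=\l_\infty$ or $\gamma$ consisting only of breakpoints) just makes explicit what the paper leaves implicit.
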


\begin{proof}
	
	By \Cref{lem:inf_lim},
	\[
	\lim_{j\to\infty}i\circ s'_j(\l'_j)=i\circ s(\l_\infty).
	\]
	Thus, we need to show that 
	\[
	i\circ s(\l)=i\circ s(\l_\infty).
	\]
	
	The section $s$ is ql-extremal over $\gamma$ by \Cref{lem:ql_int} and \Cref{lem:ql_l_linf}. Thus, if $\l_\infty,\l\notin\Ends(\mc L^u)$, then we know 
	\[
	i\circ s(\l)=i\circ s(\l_\infty)
	\]
	by \Cref{cor:ql_i}, and we are done. 
	
	Suppose instead that one or both of $\l$ or $\l_\infty$ is in $\Ends(\mc L^u)$, and denote it by $\l'$. Let $\{\mu_n\}_{n\in\N}$ be a sequence of leaves in $\gamma$ approaching $\l'$. Then 
	\[
	\lim_{n\to\infty}i\circ s(\mu_n)=i\circ s(\l')
	\]
	by \Cref{lem:i_hat_cts}. Since the terms in the limit are constant by \Cref{lem:ql_int}, we conclude that for all $\mu\in\gamma\ssm\{\l,\l_\infty\}$, we have
	\[
	i\circ s(\l')=i\circ s(\mu).
	\]
	Thus,
	\[
	i\circ s(\l)=i\circ s(\l_\infty).
	\]

 In every case we have shown
	\[
	\lim_{j\to\infty}\ct(s_j')=\ct(s).
	\]
	Since 
	\[
	\lim_{j\to\infty}\ct(s'_j)=\lim_{j\to\infty}\ct(s_j),
	\]
	and $\{s_j\}_{j\in\N}$ was any convergent subsequence, we have shown continuity.
\end{proof}

We now have everything we need to prove \Cref{thm:ct}.

\begin{proof}[Proof of \Cref{thm:ct}]
	By \Cref{prop:ct_wd}, 
	\[
	\ct\colon\mc C^\ell\to S^2_\infty
	\] 
	is a well-defined function, and it is continuous by \Cref{prop:ct_cts}. 
	
	The map $\ct$ is $\pi_1(M)$-equivariant since the maps $B$ and $i$ are. As usual, surjectivity then follows from the fact that every orbit of the action $\pi_1(M)\curvearrowright S^2_\infty$ is dense. 
	
	The last thing to check is that for all $s\in\mc C^\ell$,
	\[
	\ct(s)=i_\l\circ s(\l)
	\]
	whenever $s\in\core\pi_\l$. For any $s\in\mc C^\ell$ and leaf $\l\in\mc L^u$, the section $s'=s^\ell_{s(\l)}$ corresponds to a point in $\mc C^\ell$ mapped to the same place as $s$ under $\pi_\l$. Thus, if $s\in\core\pi_\l$, we must have $s'=s$. Then by definition,
	\[
	\ct(s)=i_\l\circ s(\l),
	\]
	so $\ct$ is a Cannon--Thurston map for $W^u$ in the sense of \Cref{def:ct_for_fol}.
\end{proof}

\subsection{Dynamics on the leftmost circle}
Finally, we apply \Cref{thm:ct} to obtain information about the dynamics of the action of $\pi_1(M)$ on $\mc C^\ell$. In particular, we show the following.

\begin{corollary}\label{corollary:dynamics}
	Every element of $\pi_1(M)$ has some power acting on $\mc C^\ell$ with a positive, finite number of fixed points, which alternate between attractors and repellors.
\end{corollary}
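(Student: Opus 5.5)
We would deduce the corollary from \Cref{thm:ct} together with the Fenley--Mann--Potrie result that any Cannon--Thurston map is uniformly finite-to-one. In particular, there is $N$ with $|\ct^{-1}(z)|\le N$ for all $z\in S^2_\infty$.

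\textbf{Step 1: fixed points lie over the poles.} Let $g\in\pi_1(M)$ be nontrivial. Since $M$ is closed hyperbolic, $g$ is loxodromic on $S^2_\infty$, with an attracting fixed point $g^+$ and a repelling fixed point $g^-$. Every compact set avoiding $g^-$ is pushed into any neighborhood of $g^+$ under positive iterates, and symmetrically for negative iterates. The finite sets $P^\pm=\ct^{-1}(g^\pm)$ are nonempty by surjectivity and $g$-invariant by equivariance. Hence some power $g^k$ fixes $P^+\cup P^-$ pointwise. Conversely, any fixed point of $g^k$ on $\mc C^\ell$ maps to a fixed point of $g^k$ on $S^2_\infty$, that is, to $g^\pm$. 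So $\mathrm{Fix}(g^k)=P^+\cup P^-$, which is finite and nonempty.

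\textbf{Step 2: local dynamics on each complementary arc.} Let $J$ be a component of $\mc C^\ell\ssm\mathrm{Fix}(g^k)$. Then $g^k$ preserves $J$, since it fixes the endpoints and preserves orientation; orientation preservation can be arranged by passing to a further power if needed. On $J$, $g^k$ acts as a fixed-point-free homeomorphism of an open interval, so every point of $J$ is pushed toward one endpoint under positive iteration and toward the other under negative iteration.

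\textbf{Step 3: endpoints of an arc lie over different poles.} We need to show the two endpoints of $J$ cannot both lie in $P^+$, and similarly not both in $P^-$. Suppose both endpoints lie in $P^+$. Then $\ct(J)$ is a connected set containing $g^+$ in its closure, and every point of $\ct(J)$ is attracted to $g^+$ under both $g^{kn}$ and $g^{-kn}$ (the latter because the $\mc C^\ell$-orbit converges to an endpoint of $J$, and $\ct$ is continuous). A point $x\neq g^\pm$ in $S^2_\infty$ has $g^{-kn}x\to g^-\neq g^+$, so $\ct(J)=\{g^+\}$. Then $\ct^{-1}(g^+)\supset J$ is infinite, contradicting finite-to-one. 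The same argument rules out both endpoints in $P^-$.

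Consequently, each arc joins a point of $P^+$ to a point of $P^-$, and by Step 2 points in the arc flow from the $P^-$ end to the $P^+$ end. Thus every point of $P^+$ is an attractor and every point of $P^-$ is a repellor, and going around the circle they alternate.

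The main obstacle is the hidden orientation and identity issue. If $g^k$ restricted to some component $J$ were the identity, Step 2 would fail. This is excluded because then $\ct(J)$ would consist of fixed points, so $\ct(J)\subset\{g^+,g^-\}$, making a fiber infinite and contradicting finite-to-one. Everything else is routine once the uniform finite-to-one property is granted.
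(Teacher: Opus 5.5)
Your proposal is correct and follows essentially the same route as the paper. In both, the fixed points of a suitable power $g^k$ are exactly $\ct^{-1}(g^+)\cup\ct^{-1}(g^-)$, by equivariance, surjectivity and the uniform finite-to-one property, and the attracting/repelling behaviour comes from continuity of $\ct$ together with the north--south dynamics of $g$ on $S^2_\infty$; your Steps 2--3 make rigorous what the paper compresses into a one-line ``local sink'' claim.

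One small repair is needed. Step 2 alone does not show that each arc flows from its $P^-$ end to its $P^+$ end. It does follow by the same reasoning as Step 3, since no point of $S^2_\infty$ has forward $g^k$-orbit converging to $g^-$ and backward orbit converging to $g^+$. Your closing ``obstacle'' paragraph is unnecessary, because each component $J$ avoids $\mathrm{Fix}(g^k)$ by definition.
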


Note that the action of $\pi_1(M)$ on $\partial\orb$ also has this property. Thus, \Cref{corollary:dynamics} can be seen as further evidence that $\pi_1(M)\curvearrowright\mc C^\ell$ should be conjugate to an action of $\pi_1(M)$ on some orbit space boundary $\partial\orb_\psi$ for $\psi$ a pseudo-Anosov flow transverse to $W^u$.

\begin{proof}[Proof of \Cref{corollary:dynamics}]
	Let $g\in\pi_1(M)$. Since $M$ is a closed hyperbolic 3-manifold, we know $g$ acts on $\wt M\cong\HH^3$ as a hyperbolic isometry. In particular, $g$ fixes exactly two points on $S^2_\infty$, which are a sink and a source. Denote these points by $g_+$ and $g_-$ respectively.
	
	We first show that the set of points of $\mc C^\ell$ that are periodic under $g$ are exactly the points sent to $g_\pm$ by $\ct$. First suppose $s\in\mc C^\ell$ is periodic under $g$. Then by $\pi_1(M)$-equivariance, $\ct(s)$ is also periodic under $g$, and thus $\ct(s)=g_\pm$. For the other direction, suppose $\ct(s)=g_\pm$. Then by $\pi_1(M)$-euivariance, the entire $g$-orbit of $s$ is sent to $g_\pm$. Since every Cannon--Thurston map is uniformly finite-to-one by \cite{FMP}, we have that the $g$-orbit of $s$ is finite, and therefore $s$ is periodic under $g$.
	
	Since $\ct$ is surjective and uniformly finite-to-one, there are a positve, finite number of points of $\mc C^\ell$ that are periodic under $g$. Fix a positive power $k$ of $g$ such that $g^k$ fixes each of these points. Let $s$ be a point in $\mc C^\ell$ that is sent to $g_+$ under $\ct$. By continuity of $\ct$, some small neighborhood of $s$ in $\mc C^\ell$ is mapped into a small neighborhood of $g_+$ in $S^2_\infty$, and so $s$ is a local sink for the action of $g^k$ on $\mc C^\ell$. Similarly, points of $\mc C^\ell$ sent to $g_-$ are local sources. Thus, we must have that the points of $\ct^{-1}(g_-)$ and the points of $\ct^{-1}(g_+)$ are arranged around $\mc C^\ell$ in an alternating fashion, and in the complementary intervals, points are moved from the local source to the local sink.
\end{proof}

\bibliographystyle{alpha}
\bibliography{anosov_ct.bib}

\end{document}